\newcommand\isom{\mathrel{\stackon[-0.1ex]{\makebox*{\scalebox{1.08}{\AC}}{=\hfill\llap{=}}}{{\AC}}}}
\newcommand\nvisom{\rotatebox[origin=cc] {-90}{$ \isom $}}
\newcommand\visom{\rotatebox[origin=cc] {90} {$ \isom $}}
	\def\MR#1{}
\newcommand{\kk}{\mathbb{k}}
\newcommand{\GG}{\mathcal{G}}
\newcommand{\syl}{{\normalfont\text{syl}}}
\newcommand{\rsyl}{{\normalfont\text{rsyl}}}
\newcommand{\syz}{{\normalfont\text{syz}}}
\newcommand{\NN}{\normalfont\mathbb{N}}
\newcommand{\ZZ}{{\normalfont\mathbb{Z}}}
\newcommand{\PP}{{\normalfont\mathbb{P}}}
\newcommand{\xx}{\normalfont\mathbf{x}}
\newcommand{\yy}{\normalfont\mathbf{y}}
\newcommand{\SD}{{\normalfont\text{SD}}}
\newcommand{\mm}{{\normalfont\mathfrak{m}}}
\newcommand{\pp}{\mathfrak{p}}
\newcommand{\aaa}{\mathfrak{a}}
\newcommand{\qqq}{\mathfrak{q}}
\newcommand{\Dij}{D_{i,\alpha}}
\newcommand{\Fij}{F_{i,\alpha}}
\newcommand{\Pij}{\psi_{i,\alpha}}
\newcommand{\nnn}{\mathfrak{n}}
\newcommand{\m}{\mathfrak{m}}
\newcommand{\rank}{\normalfont\text{rank}}
\newcommand{\depth}{\normalfont\text{depth}}
\newcommand{\grade}{\normalfont\text{grade}}
\newcommand{\Tor}{\normalfont\text{Tor}}
\newcommand{\End}{\normalfont\text{End}}
\newcommand{\Ext}{\normalfont\text{Ext}}
\newcommand{\Ker}{\normalfont\text{Ker}}
\newcommand{\Coker}{{\normalfont\text{Coker}}}
\newcommand{\morl}{{\normalfont\text{morl}}}
\newcommand{\Quot}{{\normalfont\text{Quot}}}
\newcommand{\IM}{\normalfont\text{Im}}
\newcommand{\HT}{\normalfont\text{ht}}
\newcommand{\Ann}{\normalfont\text{Ann}}
\newcommand{\Supp}{\normalfont\text{Supp}}
\newcommand{\Ass}{\normalfont\text{Ass}}
\newcommand{\Sym}{\normalfont\text{Sym}}
\newcommand{\Rees}{\mathcal{R}}
\newcommand{\Hom}{{\normalfont\text{Hom}}}
\newcommand{\JJ}{\mathcal{J}}
\newcommand{\DD}{\mathbb{D}}
\newcommand{\OO}{\mathcal{O}}
\newcommand{\LL}{\mathcal{L}}
\newcommand{\HL}{\normalfont\text{H}_{\mm}}
\newcommand{\HH}{\normalfont\text{H}}
\newcommand{\gr}{\normalfont\text{gr}}
\newcommand{\AAA}{\mathcal{A}}
\newcommand{\bideg}{\normalfont\text{bideg}}
\newcommand{\Proj}{\normalfont\text{Proj}}
\newcommand{\Spec}{{\normalfont\text{Spec}}}
\newcommand{\biProj}{{\normalfont\text{BiProj}}}
\newtheorem{theorem}{Theorem}[section]
\newaliascnt{headcor}{headthm}
\newaliascnt{headconj}{headthm}
\newaliascnt{corollary}{theorem}
\newtheorem{corollary}[corollary]{Corollary}
\newtheorem{claim}{Claim}[theorem]
\newaliascnt{lemma}{theorem}
\newtheorem{lemma}[lemma]{Lemma}
\newaliascnt{conjecture}{theorem}
\newtheorem{conjecture}[conjecture]{Conjecture}
\newaliascnt{proposition}{theorem}
\newtheorem{proposition}[proposition]{Proposition}
\theoremstyle{definition}
\newaliascnt{definition}{theorem}
\newtheorem{definition}[definition]{Definition}
\newaliascnt{notation}{theorem}
\newaliascnt{example}{theorem}
\newtheorem{example}[example]{Example}
\newaliascnt{examples}{theorem}
\newaliascnt{remark}{theorem}
\newtheorem{remark}[remark]{Remark}
\newaliascnt{question}{theorem}
\newaliascnt{questions}{theorem}
\newaliascnt{problem}{theorem}
\newaliascnt{construction}{theorem}
\newaliascnt{setup}{theorem}
\newtheorem{setup}[setup]{Setup}
\newaliascnt{algorithm}{theorem}
\newaliascnt{observation}{theorem}
\newtheorem{observation}[observation]{Observation}
\newaliascnt{defprop}{theorem}
\DeclareFontFamily{OT1}{pzc}{}
\DeclareFontShape{OT1}{pzc}{m}{it}{<-> s * [1.100] pzcmi7t}{}
\DeclareMathAlphabet{\mathchanc}{OT1}{pzc}{m}{it}
\def\equationautorefname~#1\null{(#1)\null}
\def\sectionautorefname~#1\null{Section #1\null}
\def\subsectionautorefname~#1\null{\S #1\null}
\def\surjects{\twoheadrightarrow}
\begin{document}
	
	% ---------------Tittle and presentation----------------------------
	\title{Generalized Jouanolou duality, weakly Gorenstein rings, and applications to blowup algebras}

	\author{Yairon Cid-Ruiz}
	\address{Department of Mathematics, North Carolina State University, Raleigh, NC 27695, USA}
	\email{ycidrui@ncsu.edu}
	
	\author{Claudia Polini}
	\address{Department of Mathematics, University of Notre Dame, Notre Dame, IN 46556, USA}
	\email{cpolini@nd.edu}
	
	\author{Bernd Ulrich}
	\address{Department of Mathematics, Purdue University, West Lafayette, IN 47907, USA}
	\email{bulrich@purdue.edu}

	\date{\today}
	
	\keywords{blowup algebras, Rees algebra, symmetric algebra, Jouanolou duality, weakly Gorenstein rings, perfect pairing, Morley forms, canonical module, determinantal rings, rational maps, approximation complex.}
	\subjclass[2020]{13A30, 13H10, 13D45, 13D07, 14E05.}

	\begin{abstract}
		We provide a generalization of Jouanolou duality that is applicable to a plethora of situations. 
		The environment where this generalized duality takes place is a new class of rings, that we introduce and call weakly Gorenstein rings.
		As a consequence, we obtain a new general framework
		%of dualities 
		to investigate blowup algebras.
		We use our results to study and determine the defining equations of the Rees algebra for certain families of ideals.
	\end{abstract}

	\maketitle

	\section{Introduction}

	\noindent
	1.1.
	\textbf{Generalized Jouanolou duality and weakly Gorenstein rings.}
	\smallskip

	In a series of seminal papers Jouanolou studied elimination theory through the lens of modern algebraic geometry (\cites{Jo, Jo2, Jo3, Jo4, Jo5, Jo6}).
	One of his main tools was a new duality, nowadays dubbed \emph{Jouanolou duality}.
	Let $S$ be a positively graded  Noetherian $T$-algebra, with graded irrelevant ideal $\mm$. 
	When $S$ is a (not necessarily flat) complete intersection over $T$  and $\dim(S) = \dim(T)$, Jouanolou proved that, up to shift in degree, there are graded $S$-isomorphisms 
	$
	\HL^i(S)  \cong {}^*\Ext_T^i(S, T)
	$
	for all $i \ge 0$, where ${}^*\Ext$ denotes the graded $\Ext$ functor.
	
	The fundamental problem in elimination theory is to compute the image of a projection (see \cite{E}*{\S 14.1}), like the natural projection $\pi : X = \Proj(S) \rightarrow \Spec(T)$.
	The scheme-theoretic image of $\pi$ is given by the closed subscheme $\Spec(T/\aaa)$ with $\aaa = \Ker(T \xrightarrow{\; \rm nat \;} \HH^0(X, \OO_X))$.
	We say that $\aaa$ is the \emph{resultant ideal}.
	We have the four-term exact sequence 
	$
	0 \rightarrow \HL^0(S)_0 \rightarrow T = S_0 \xrightarrow{\; \rm nat \;} \HH^0(X, \OO_X) \rightarrow \HL^1(S)_0 \rightarrow 0.
	$
	This shows that the scheme-theoretic image of $\pi$ is given by 
	$$
	\IM(\pi) \;=\; \Spec(T/\aaa) \quad \text{with} \quad \aaa \;=\; \HL^0(S)_0.
	$$
	Therefore, Jouanolou duality gives an effective method to compute the image of a projection. 
	Indeed, if we have the isomorphism $\HL^0(S) \cong {}^*\Hom_T(S, T)(-\delta)$, we can compute the torsion part $\HL^0(S)_0$ (which involves the multiplicative structure of $S$) via the $T$-module $\Hom_T(S_\delta, T)$ (which can be computed as the kernel of the transpose of a presentation matrix of $S_\delta$ as a $T$-module).
	Using his duality, Jouanolou proved many beautiful formulas involving resultants (\cites{Jo, Jo2, Jo3, Jo4, Jo5, Jo6}).
	The expository references \cites{buse2005elimination, BUSE_CATANESE_POSTINGHEL,BC_FIB_ELIM,COX_ELIM} show how Jouanolou's work (in particular, his duality) is still relevant in modern elimination theory.

	\smallskip
	
	Our goal is to extend this duality to algebras that are not complete intersections, in fact not even Gorenstein.
	To this end, we introduce a new generalization of  Gorenstein rings.
	We call these rings \emph{weakly Gorenstein} rings.
	As in the classical case of Gorenstein rings where local duality takes place, our new notion of weakly Gorenstein rings is the natural environment where Jouanolou duality exists and can be generalized.
	Assume $S$ is Cohen-Macaulay and  $\omega_S$ is a graded canonical module of $S$.
	Let $\aaa \subset S$ be a homogeneous ideal and $i \ge 0$ be an integer.
	We say that $S$ is \emph{$i$-weakly Gorenstein with respect to $\aaa$} if there exists a fixed homogeneous element $y \in \omega_S$ that generates $\omega_S$ generically and locally in codimension at most $i$ at every prime that contains $\aaa$.
	More precisely, we require that 
	$$
	\dim\big(\Supp\left(\omega_S/Sy\right)\big) \,<\, \dim(S)  \quad \text{ and } \quad 
	\dim\big(\Supp\left(\omega_S/Sy\right) \cap V(\aaa)\big) \,<\, \dim(S) - i. 
	$$	
	
	Our first main result  says that a generalization of Jouanolou duality holds for weakly Gorenstein rings.
	%(see \autoref{thm_gen_Jou})
	Indeed, let $B=T[x_1,\ldots,x_d]$ be a positively graded polynomial ring mapping onto $S$ and assume that $S$ is a perfect $B$-module of codimension $c$. 
	If $S$ is $(i+1)$-weakly Gorenstein with respect to $\mm$, then there is a graded $S$-isomorphism
	$$
	\HL^i(S) \, \cong \,  {}^*\Ext_T^{i+c-d}(S, T)\left(\deg(y)\right)
	$$
	(see \autoref{thm_gen_Jou}).
	When $i=0$ and $S$ is standard graded with $c=d$,
	%If in addition $S$ is standard graded with $c=d$, 
	and $\delta := - \deg(y)$, 
	we prove that there
	is an isomorphism $\omicron: \, \HL^0(S)_{\delta} \xrightarrow{\; \cong \;} T$ and that the multiplication maps
	$\, \HL^0(S)_j\otimes _T S_{\delta -j} \rightarrow \HL^0(S)_{\delta} \,$ followed by $\omicron$ are perfect pairings 
	inducing isomorphisms $\, \HL^0(S)_{j} \xrightarrow{\; \cong \;}\; \Hom_T\left(S_{\delta-j}, T\right) \, $ as above (see \autoref{Gen_perfectpairing}). We provide explicit inverses of these isomorphisms, which we construct from any suitable element
	$\Delta$ in the annihilator of the diagonal ideal, the kernel
	of the multiplication map $S\otimes_T S \rightarrow S$ (see \autoref{thm_Morley_forms1}). The significance of such explicit
	inverses is that they reduce the computation of $\HL^0(S)$, as an ideal in $S$,
	to the computation of $ {}^*\Hom_T(S,T)$. Jouanolou achieved this by means of Morley forms, which our construction generalizes. We will elaborate on 
	this for 
	the special case of symmetric algebras, in the second part of the Introduction.

	\smallskip
	
	Surprisingly, many classes of algebras and ideals of interest satisfy the weakly Gorenstein condition. 
	We prove that determinantal rings tend to have this property (see \autoref{thm_det_weak_Gor}), and so do symmetric algebras  as long as they are Cohen-Macaulay (see \autoref{thm_Sym_weak_Gor}).
	Our approach to showing that symmetric algebras have the weakly Gorenstein property is by computing explicitly the canonical module. 
	The formula for the canonical module is interesting in its own right, and yields several applications.
	
	\smallskip

	We now describe a family of symmetric algebras that satisfy the weakly Gorenstein property, their canonical modules, and some related results that we obtain along the way.
	Let $(R, \mm)$ be a $d$-dimensional Cohen-Macaulay local ring, and $I = (f_1,\ldots,f_n)\subset R$ be an ideal minimally generated by $n$ elements and of codimension $g = \HT(I) \ge 2$.
	One says that $I$ has the property $F_0$ if for any $\pp \in V(I)$ the minimal number of generators satisfies $\mu(I_\pp) \le \dim(R_\pp) + 1$.
	For any $k \ge 0$, one says that $I$ has the sliding depth property $\SD_k$ if the depth of the $i$-th Koszul homology with respect to the sequence $f_1,\ldots,f_n$ is at least $d- n+ i + k$ for all $i \ge 0$.
	The condition $F_0$ is necessary for the Cohen-Macaulayness of $\Sym(I)$, and thus unavoidable to show the weakly Gorenstein condition. 
	Under the additional assumptions that $R$ is Gorenstein with infinite residue field and that $I$ satisfies $\SD_1$, in \autoref{thm_Sym_weak_Gor}, we compute explicitly the canonical module of $\Sym(I)$ and prove that $\Sym(I)$ is $d$-weakly Gorenstein with respect to the ideal $\mm \, \Sym(I).$
	%\begin{enumerate}[\rm (i)]	
	%		\item the canonical module $\omega_{\Sym(I)}$ is isomorphic to an ideal of the form $\left(f_1,y_1\right)^{g-2}\Sym(I)$.
	%		\item $\Sym(I)$ is $d$-weakly Gorenstein with respect to the ideal $\mm \, \Sym(I)$.
	%	\end{enumerate}

These results are applicable in a wide range of situations since the condition $\SD_1$ is satisfied by several classes of ideals (for instance, strongly Cohen-Macaulay ideals, and so in particular, perfect ideals of codimension two and perfect Gorenstein ideals codimension three).
Our formula for the canonical module of symmetric algebras coincides with a formula for the canonical module of certain Rees algebras (see \cite{HSV2}), which leads to  interesting consequences in \autoref{cor_beautiful}.

Our main tool to compute $\omega_{\Sym(I)}$ is a new complex that mends one of the main drawbacks of the approximation complex $\mathcal{Z}_\bullet$.
The approximation complex $\mathcal{Z}_\bullet$ is ubiquitous in the study of blowup algebras, and it provides a resolution of the symmetric algebra in many cases of relevance. 
However, the fact that is made up of Koszul syzygies, which are typically not free, can be a non trivial obstacle.  
To remedy this problem we introduce a halfway resolution that refines $\mathcal{Z}_\bullet$.
In \autoref{prop_modified_Z_complex}, we introduce the new complex $\mathcal{L}_\bullet$ that consists of free modules in the last $g-1$ positions and that coincides with  $\mathcal{Z}_\bullet$ in the remaining positions. 
The complex $\mathcal{L}_\bullet$ is acyclic when $\mathcal{Z}_\bullet$ is.

Furthermore, these halfway free resolutions lead to actual free resolutions of the symmetric algebra for special families of ideals such as almost complete intersections and perfect ideals of deviation two (see \autoref{thm_res_Sym}). 
%The usefulness of the complex $\mathcal{L}_\bullet$ came as surprise, 
The construction of the complex $\mathcal{L}_\bullet$ is notable since computing free resolutions of symmetric algebras is a problem of tall order.
%, it is somehow surprising the utility of the complex $\mathcal{L}_\bullet$. 

\medskip

\noindent
1.2.
\textbf{Applications to blowup algebras.}
\smallskip

Our generalization of Jouanolou duality and the definition of weakly Gorenstein rings provide a general  framework to study blowup algebras. 
We are particularly interested in finding the defining equations of the Rees algebra. 
Since Rees algebras appear as the coordinate ring of the blowup of a variety along a subvariety, the significance of finding their defining equations becomes apparent. 
This problem has been extensively studied by algebraic geometers, commutative algebraists, and researchers in applied areas like geometric modeling (see, e.g., \cites{KPU3,KPU4,KPU5,Buse,KM,CBD14,V_REES_EQ,COX_MOV_CURV,COX_HOFFMAN_WANG,HONG_SV,KPU_RAT_SC,CBD13,Morey, MU}). 

\smallskip

When the symmetric algebra is a complete intersection, the classical Jouanolou duality  is the standard (and the most forceful) tool to compute the defining equations of the Rees algebra (see \cites{Buse,KPU3,KM}).
We now describe and justify how our generalized Jouanolou duality can and does play a similar role when the symmetric algebra is only Cohen-Macaulay.

\smallskip

Let $\kk$ be a field, $R = \kk[x_1,\ldots,x_d]$ be a standard graded polynomial ring, $\mm = (x_1,\ldots,x_d) \subset R$ be the graded irrelevant ideal, and $I = (f_1,\ldots,f_n) \subset R$ be an ideal minimally generated by $n$ forms of degree $D \ge 1$. 
Let $T = \kk[y_1,\ldots,y_n]$ be a standard graded polynomial ring and $\nnn = (y_1,\ldots,y_n) \subset T$ be the graded irrelevant ideal.
Let $B = R \otimes_\kk T$ be a standard bigraded polynomial ring, and consider the bihomogenous epimorphism 
$$
\Phi : B \twoheadrightarrow \Rees(I) = R[It] = \bigoplus_{j = 0}^\infty I^jt^j \subset R[t], \quad x_i \mapsto x_i \text{ and } y_i \mapsto f_it.
$$
Then $\JJ = \Ker(\Phi) \subset B$ is the defining ideal of $\Rees(I)$.
The graph of the rational map 
$$
\GG : \PP_\kk^{d-1} \dashrightarrow \PP_\kk^{n-1}
$$ 
determined by the forms $f_1,\ldots,f_n$ is naturally given as $\Gamma = \biProj(B/\JJ) \subset \biProj(B) = \PP_\kk^{d-1} \times_\kk \PP_\kk^{n-1}$. 

\smallskip

Traditionally one considers the Rees algebra as a natural epimorphic image of the symmetric algebra $\Sym(I)$ of $I$ and one studies the kernel of this map,
$$
0 \rightarrow \AAA \rightarrow \Sym(I) \rightarrow \Rees(I) \rightarrow 0.
$$
The kernel $\AAA$ is the $R$-torsion submodule of $\Sym(I)$.
The defining equations of $\Sym(I)$ are easily read from a minimal presentation matrix of $I$.
An almost unavoidable constraint in the study of blowup algebras is the $G_d$ condition.
The ideal $I$ satisfies $G_d$ if  $\mu(I_\pp) \le \dim(R_\pp)$ for all $\pp \in V(I)$ such that $\HT(\pp) < d$.
Furthermore, we need the conditions $F_0$ and $\SD_1$ to show that $\Sym(I)$ has the weakly Gorenstein property with respect to $\mm \,\Sym(I)$.
Under these conditions, in \autoref{thm_Jou_dual_Sym}, we provide a general framework of dualities to study blowup algebras. 
Let $\delta = (g-1)D-d$ and $\beta = d-g+2$.
Suppose that $n = d+1$, $g = \HT(I) \ge 2$, and $I \subset R$ satisfies the conditions $G_d$ and $\SD_1$. 
We then show that the following six statements hold: 
\begin{enumerate}[(i)]
	\item $\mathcal{A} = \HL^0(\Sym(I))$.\smallskip
	
	\item For all $0 \le i \le d-1$, there is an isomorphism of bigraded $B$-modules 
	$$
	\HL^i(\Sym(I)) \, \cong \, {}^*\Ext_T^i\left(\Sym(I), T\right)\left(-\delta, -\beta\right).
	$$
	In particular, $\AAA = \HL^0(\Sym(I)) \cong {}^*\Hom_T\left(\Sym(I), T\right)\left(-\delta, -\beta\right)$.
	\smallskip
	
	\item 
	For all $i < 0$ and $i > \delta$, we have $\AAA_{(i,*)} = 0$. 
	There is an isomorphism $\AAA_{(\delta,*)} \cong T(-\beta)$ of graded $T$-modules.
	For all $0 \le i \le \delta$, we have the equality of $\Sym(I)$-ideals
	$$
	\AAA_{(\ge i,*)} \; = \; 0:_{\Sym(I)} \mm^{\delta+1-i}.
	$$
	
	\item $\AAA$ is minimally generated in $\xx$-degree at most $(g-2)D-d+1$.
	
	\item 
	Let $0 \le i \le \delta$. 
	The natural multiplication map 
	$
	\mu : \AAA_{(i,*)} \; \otimes_T \; \Sym(I)_{(\delta-i,*)} \;\rightarrow\;  \AAA_{(\delta,*)}, \; a \otimes b \mapsto a\cdot b
	$
	is a perfect pairing that induces the abstract isomorphism 
	$$
	\nu : \AAA_{(i,*)} \;\xrightarrow{\;\cong\;}\; \Hom_T\left(\Sym(I)_{(\delta-i,*)}, \AAA_{(\delta,*)}\right)
	$$
	seen in part {\rm(ii)}.
	\smallskip
	
	\item For all $2 \le i \le d+1$, there is an isomorphism of bigraded $B$-modules 
	$$
	\HH_\nnn^i(\Sym(I)) \, \cong \, {}^*\Ext_R^{i-1}(\Sym(I), R)\left(-(g-1)D, g-1\right).
	$$
\end{enumerate}
A couple of words regarding the results of \autoref{thm_Jou_dual_Sym} are in place. 
Part (i) simply means that $I$ is an ideal of linear type on the punctured spectrum on $R$.
Part (ii) comes by applying our generalization of Jouanolou duality and from the fact that we prove $\Sym(I)$ to be $d$-weakly Gorenstein with respect to the ideal $\mm \, \Sym(I)$.
Part (iii) shows that the graded components of $\AAA$ with respect to the $\xx$-grading can be read from the natural filtration 
$$
\AAA = 0:_{\Sym(I)} \mm^{\delta+1} \;\; \supset \;\;  0:_{\Sym(I)} \mm^{\delta}  \;\; \supset \;\; \cdots  \;\; \supset \;\;  0:_{\Sym(I)} \mm^2  \;\; \supset \;\; 0:_{\Sym(I)} \mm = \AAA_{(\delta,*)}.
$$
Part (iv) goes even further, it gives an upper bound for the $\xx$-degree of the minimal generators of $\AAA$, and it can actually be sharp (see \autoref{rem_bound_gen_sharp}).
Part (v) implies that the abstract isomorphism 
$$
\AAA \,=\, \HL^0(\Sym(I)) \,\cong\, {}^*\Hom_T\left(\Sym(I), T\right)\left(-\delta, -\beta\right)
$$
naturally comes from a multiplication map, and this becomes a fundamental fact in our approach to study blowup algebras. 
Part (vi) gives a generalized Jouanolou duality statement with $R$ taking the role of $T$. 
To show part (vi), since $\Sym(I)$ is only $0$-weakly Gorenstein with  respect to the ideal $\nnn \Sym(I)$ (see \autoref{rem_no_weak_Gor_nnn}), we instead rely on the isomorphism of part (ii) and a duality result of Herzog and Rahimi (\cite{HeRa}).

In a similar vein, we relate the freeness of $\AAA$ to the depth of $\Rees(I)$.
Explicitly, we prove in \autoref{prop_depth_Rees} that $\AAA$ is a free $T$-module if and only if $\depth(\Rees(I)) \ge d$.

\smallskip

%(up to multiplication by a unit in $\kk$).

A remarkable feature of the classical Jouanolou duality is that it can be made completely explicit in terms of \emph{Morley forms}.
However, the usual notion of Morley forms is not enough in our setting.
For this reason, we devise a new reduction procedure that makes explicit the perfect pairing of part (v).
We now briefly describe our generalization of the theory of Morley forms. 
Let $\DD$ be the kernel of the natural multiplication map $\Sym(I) \otimes_T \Sym(I) \rightarrow \Sym(I)$.
We choose a suitable element $\Delta \in \Sym(I) \otimes_T \Sym(I)$  in the annihilator of the diagonal ideal $\DD$ that is homogeneous of degree $\delta$ in the $x$-grading.
Our definition of Morley forms $\morl_{(\delta-i,i)}$ comes by considering the graded components of this element $\Delta$.
By applying \autoref{thm_Morley_forms1}, we show that there is an explicit and computable homogeneous $T$-homomorphism
$$
\xi \,:\, \Hom_T\left(\Sym(I)_{(\delta-i,*)}, \AAA_{(\delta,*)}\right) \rightarrow \AAA_{(i,*)}, \qquad u \mapsto\frac{\Phi_{\Delta}(\omicron \circ u)}{\alpha}
$$
that gives the inverse map of the isomorphism $\nu : \AAA_{(i,*)} \rightarrow \Hom_T\left(\Sym(I)_{(\delta-i,*)}, \AAA_{(\delta,*)}\right)$. 
Here $\Phi_\Delta$ is a map with target $\AAA$ and $\alpha \in T$ is a non zero element, both determined by $\Delta$.
For more details on the notation, see \autoref{Morley},  \autoref{subsect_graded_T}, and \autoref{subsect_Morley}.

Since $\Sym(I)$ is assumed to be Cohen-Macaulay, it is of dimension $d+1$ and $\mm \, \Sym(I)$ is a minimal prime of $\Sym(I)$. When $\Sym(I)$ is a complete intersection at the minimal prime $\mm \, \Sym(I)$, we provide a simple and direct method to find the required element $\Delta$ (see \autoref{rem_classical_Morley}). %Here one chooses a certain regular sequence, and then one defines the Morley forms $\morl_{(i,\delta-i)}$, the Sylvester form $\syl \in \AAA_{(\delta,*)}$, the reduced Sylvester form $\rsyl \in \AAA_{(\delta,*)}$, and the element $\alpha$ in $T$ given as $\syl/\rsyl$. 
Several classes of ideals satisfy the condition that $\Sym(I)$ is a complete intersection at $\mm \, \Sym(I)$ (see \autoref{rem_zero_dim_plans} and \autoref{rem_general_eqs_Sym_Gor_3}). 
If $\Sym(I)$ is a complete intersection, then $\Delta$ coincides with the element used by Jouanolou in the construction of the classical Morley forms.

\smallskip

The last part of the paper is dedicated to applications.
We show that our results can deal with cases that  were unreachable with  previously existing methods.
Our presentation here is divided in terms of two families of ideals that we treat.

\emph{Zero dimensional ideals.}
If $I \subset R$ is additionally an $\mm$-primary ideal, then we prove that $\Rees(I)$ can even be approximated by two better understood algebras. 
One is the usual symmetric algebra $\Sym(I)$.
And the other, less standard choice is the symmetric algebra $\Sym(E)$, where $E$ is the module defined by the Koszul syzygies. 
In this case, the ideal $I$ satisfies all the conditions of \autoref{thm_Jou_dual_Sym}, and so we can use $\Sym(I)$ and all the above results to approximate and to study $\Rees(I)$.
On the other hand, in  \autoref{thm_zero_dim}, we also show that there is a short exact sequence $0 \rightarrow \mathcal{B} \rightarrow \Sym(E) \rightarrow \Rees(I) \rightarrow 0$ with $\mathcal{B} = \HL^0(\Sym(E))$, and that there is an isomorphism of bigraded $B$-modules 
$$
\HL^i(\Sym(E)) \, \cong \, {}^*\Ext_T^i\left(\Sym(E), T\right)\left(-dD-d, -1\right)
$$
for all $0 \le i \le d-1$.
This opens new possibilities to study $\Rees(I)$ when $I$ is assumed be a zero dimensional ideal.
We plan to pursue this approach further in a subsequent paper.

\emph{Gorenstein ideals of codimension three.}
In addition, suppose that $I \subset R$ is a Gorenstein ideal of codimension three.
This case is covered by \autoref{thm_Jou_dual_Sym} and so one can use the above results to study $\Rees(I)$ (see \autoref{thm_Gor_3}).
Let $h \ge 1$ be the degree of the homogeneous elements in a minimal alternating presentation matrix $\varphi \in R^{(d+1)\times(d+1)}$ of $I$.
For ease of exposition, we restrict to the case $h=2$ with the extra assumption that the monomial support of the entries of $\varphi$ generates an almost complete intersection, and we leave the other cases to be studied in a subsequent paper.
For this case, in \autoref{thm_sublime_h_2}, we give a complete picture of the problem. 
We show that the ideal $\JJ \subset B$ of defining equations of $\Rees(I)$ is minimally generated by the defining ideal of $\Sym(I)$ and three forms of bidegrees $(0, 2d-2)$, $(1, d-1)$ and $(1,d-1)$.
In fact,  we explicitly compute these minimal generators of $\JJ$ in terms of the Jacobian dual of $\varphi$ and Morley forms!
In \autoref{thm_sublime_h_2}, we also prove that $\deg(\GG) = 2^{d-2}$, $\deg(Y) = 2d-2$, $\depth(\Rees(I)) \ge d$, and $\depth(\gr_I(R)) \ge d-1$, where $\mathcal{G} : \PP_\kk^{d-1} \dashrightarrow \PP_\kk^d$ is the corresponding rational map and $Y \subset \PP_\kk^d$ is the closure of the image of $\mathcal{G}$.

\medskip

\noindent
\textbf{Outline.}
The structure of the paper is as follows.
In \autoref{sect_jou_dual}, we provide an extension of Jouanolou duality and Morley forms.
In \autoref{sect_det_rings}, we identify determinantal rings that satisfy the weakly Gorenstein condition.
The weakly Gorenstein condition for symmetric algebras is studied in \autoref{sect_weak_Gor_Sym}.
In \autoref{sect_dualities_Sym}, we provide a general framework of duality statements that are relevant in the study of blowup algebras.
Lastly, \autoref{sect_applications} is dedicated to the study of specific families of ideals where we apply the methods developed in this paper.

\section{An extension of Jouanolou duality and Morley Forms}
\label{sect_jou_dual}

In this section, we provide a generalization of Jouanolou duality that can be applied to a plethora of situations.
We assume the following setup throughout. 

\begin{setup}
	\label{setup_duality}
	Let $S$ be a positively graded Noetherian ring with $S_0$ a 
	factor ring of a local Gorenstein ring $T$. 
	%$T$ be a local Gorenstein ring and $S$ be a positively graded finitely generated $T$-algebra.
	Choose a positively graded polynomial ring $B = T[x_1,\ldots,x_d]$ such that we have a graded surjection $B \twoheadrightarrow S$.
	Let $\mm  = (x_1,\ldots,x_d)$ be the graded irrelevant ideal of $B$.
	Set $b := \deg(x_1) + \cdots + \deg(x_d)$.
\end{setup}

\begin{remark}
	In this paper, we freely use basic properties of canonical modules, and our standard reference is \cite{BH}*{Chapter 3}.
	%In particular, a Cohen-Macaulay ring has a canonical module if and only if it is a quotient of a Gorenstein ring. 
	Since $T$ is assumed to be a local Gorenstein ring, the graded canonical module of $B$ is given by $\omega_B = B(-b)$ (see \cite{BH}*{\S 3.6}).
	As a consequence the graded canonical module of $S$ can be computed as $\omega_S = \Ext_B^{c}\left(S, B(-b)\right)$ where $c=\dim(B)-\dim(S)$.
\end{remark}

For a graded $B$-module $M$, we denote the graded $T$-dual as 
$$
{}^*\Hom_T(M, T) \,:=\, \bigoplus_{j \in \ZZ} \Hom_T\left([M]_{-j}, T\right),
$$
and the corresponding right derived functor as ${}^*\Ext_T^i(M, T)$.
Note that ${}^*\Ext_T^i(M, T)$ is naturally a graded $B$-module for all $i \ge 0$.
For more details on the functors ${}^*\Ext_T^i$ and their properties, the reader is referred to \cite{BH}*{\S 1.5}.

The following result yields a version of Jouanolou duality in terms of the canonical module of $S$, and it is applicable in great generality.

\begin{theorem}
	\label{thm_duality_canonical_mod}
	Assume \autoref{setup_duality}.
	Suppose that $S$ is perfect over $B$ and of codimension $c$.
	Then we have a graded isomorphism of $\,S$-modules
	$$
	\HL^i(\omega_S) \, \cong \, {}^*\Ext_T^{i+c-d}(S, T)
	$$
	for all $i \in \ZZ$.
\end{theorem}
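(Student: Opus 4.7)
My plan is to reduce to standard graded local duality for the polynomial ring $B$ over $T$ via a double-complex argument against a finite free resolution of $S$.

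Since $S$ is perfect of codimension $c$ over $B$, choose a graded $B$-free resolution $F_\bullet \to S$ of length $c$. Because $T$ is Gorenstein local, $\omega_T \cong T$, and consequently the (relative) canonical module of the graded polynomial ring is $\omega_B \cong B(-b)$. Perfectness yields $H^c\bigl({}^*\Hom_B(F_\bullet, B)(-b)\bigr) \cong \omega_S$ and vanishing in other cohomological degrees. Set $G^\bullet := {}^*\Hom_B(F_\bullet, B)(-b)$, a bounded complex of finite graded free $B$-modules whose only nonzero cohomology is $\omega_S$ in position $c$.

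Next I would analyze the hypercohomology of $\HL^\bullet$ applied to $G^\bullet$ via the two spectral sequences coming from the double complex $\check{C}^\bullet(x_1,\ldots,x_d;\, G^\bullet)$. One spectral sequence first takes cohomology of $G^\bullet$: since $G^\bullet$ has cohomology concentrated in position $c$, its $E_2$-page reads $E_2^{p,q} = \HL^p(H^q(G^\bullet))$, which is $\HL^p(\omega_S)$ when $q=c$ and $0$ otherwise; this collapses and the total cohomology in degree $n$ is $\HL^{n-c}(\omega_S)$. The other spectral sequence first takes Čech cohomology: each $G^i$ is graded $B$-free, so $\HL^q(G^i) = 0$ for $q\neq d$; hence $E_2^{p,q} = H^p(\HL^q(G^\bullet))$ is concentrated in the row $q=d$, collapses, and the total cohomology in degree $n$ is $H^{n-d}(\HL^d(G^\bullet))$. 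Comparing both collapses gives the crucial identification
\[
\HL^i(\omega_S)\;\cong\;H^{i+c-d}\bigl(\HL^d(G^\bullet)\bigr).
\]

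To finish I would identify the complex $\HL^d(G^\bullet)$ with ${}^*\Hom_T(F_\bullet, T)$. A direct Čech computation on the polynomial ring gives the (graded) local duality isomorphism $\HL^d(B) \cong {}^*\Hom_T(B,T)(b)$ of graded $B$-modules, which is a naturality statement for the polynomial ring over an arbitrary base and does not require $T$ to be a field. Applied to each summand $B(-a)$ it yields $\HL^d\bigl({}^*\Hom_B(B(-a),B)(-b)\bigr) \cong {}^*\Hom_T(B(-a),T)$; by naturality this assembles into an isomorphism of complexes $\HL^d(G^\bullet) \cong {}^*\Hom_T(F_\bullet, T)$. Since $B$ is $T$-flat, each $F_i$ is $T$-free, so $F_\bullet$ is also a $T$-free resolution of $S$, giving $H^j\bigl({}^*\Hom_T(F_\bullet,T)\bigr) = {}^*\Ext_T^j(S,T)$. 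Combining these identifications yields $\HL^i(\omega_S) \cong {}^*\Ext_T^{i+c-d}(S,T)$ as required.

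The main technical obstacle is twofold: verifying naturality in the graded local duality isomorphism $\HL^d(B) \cong {}^*\Hom_T(B,T)(b)$ at the level of shifted free $B$-modules (so that it promotes to an isomorphism of complexes rather than just of objects), and carefully tracking the grading shifts so that the final statement comes out without a shift. The shifts from $\omega_B=B(-b)$, from $\HL^d(B)=({}^*\Hom_T(B,T))(b)$, and from dualising $F_i=B(-a)$ cancel exactly, which is the reason the conclusion is so clean.
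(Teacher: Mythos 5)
Your argument is correct and coincides essentially with the second of the two proofs given in the paper: the same double complex $C_\mm^\bullet \otimes_B G^\bullet$ with its two collapsing spectral sequences yielding $\HL^i(\omega_S)\cong \HH^{i+c-d}(\HL^d(G^\bullet))$, and the same termwise application of the functorial isomorphism $\HL^d(B)\cong {}^*\Hom_T(B,T)(b)$ to identify $\HL^d(G^\bullet)$ with ${}^*\Hom_T(F_\bullet,T)$. The degree bookkeeping you describe matches the paper's, so there is nothing to add.
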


\begin{proof}[First proof]
	Let $F_\bullet : 0 \rightarrow F_c \rightarrow \cdots \rightarrow F_1 \rightarrow F_0$ be a minimal graded $B$-resolution of $S$.
	Since $S$ is perfect of codimension $c$, $F_\bullet$ has length equal to $c$.
	We then have the following isomorphisms of graded $B$-modules
	\begin{alignat*}{3}
		{}^*\Ext_T^{i+c-d}(S, T) & \;\cong\; \HH^{i+c-d}\left({}^*\Hom_T(F_\bullet, T)\right) \\
		& \;\cong\; \HH^{i+c-d}\left({}^*\Hom_T(F_\bullet \otimes_B B, T)\right) \\
		& \;\cong\; \HH^{i+c-d}\left( \Hom_B\left(F_\bullet, {}^*\Hom_T(B, T)\right) \right) & \text{by Hom-tensor adjointness} \\
		& \;\cong\; \HH^{i+c-d}\left( \Hom_B(F_\bullet, B) \otimes_B {}^*\Hom_T(B, T) \right) \\
		& \;\cong\; \HH^{i+c-d}\left( \Hom_B(F_\bullet, B) \otimes_B \HL^d(B)(-b) \right) & \text{by graded local duality}\\
		& \;\cong\; \HH_{c-(i+c-d)}\left( \Hom_B(F_\bullet, B)[-c] \otimes_B \HL^d(B)(-b) \right) & \text{by a homological shift}\\
		& \;\cong\; \Tor_{d-i}^B\left(\omega_S(b), \HL^d(B)(-b)\right) \, = \, \Tor_{d-i}^B\left(\omega_S, \HL^d(B)\right).
	\end{alignat*}
	The last step in the above sequence of isomorphisms follows from the fact that $\Hom_B(F_\bullet, B(-b))[-c]$ is a minimal homogeneous $B$-resolution of $\omega_S$, because by assumption $S$ is perfect over $B$.
	
	The \v{C}ech complex $C_\mm^\bullet : 0 \rightarrow B \rightarrow \bigoplus_{i=1}^d B_{x_i} \rightarrow \cdots \rightarrow B_{x_1\cdots x_d} \rightarrow 0$ with respect to the sequence $x_1,\ldots,x_d$ is a complex of flat $B$-modules, $\HH^i(C_\mm^\bullet) = 0$ for $i < d$, and  $\HH^d(C_\mm^\bullet) \cong \HL^d(B)$. 
	By computing $\Tor_{d-i}^B\left(\omega_S, \HL^d(B)\right)$ via this flat resolution of $\HL^d(B)$, we obtain that 
	$$
	\Tor_{d-i}^B\left(\omega_S, \HL^d(B)\right) \,\cong\, \HH^{d-(d-i)}\left(\omega_S \otimes_B C_\mm^\bullet \right) \,\cong\,\HL^i(\omega_S).
	$$
	Finally, by combining all these isomorphisms, we obtain 
	$$
	\HL^i(\omega_S) \cong \Tor_{d-i}^B\left(\omega_S, \HL^d(B)\right) \cong {}^*\Ext_T^{i+c-d}(S, T).\qedhere
	$$
\end{proof}

\begin{proof}[Second proof]\footnote{This proof is \emph{\`a la Jouanolou} in the sense that we utilize a spectral sequence argument that is present in several parts of Jouanolou's work.}
	As before, let $F_\bullet : 0 \rightarrow F_c \rightarrow \cdots \rightarrow F_1 \rightarrow F_0$ be a minimal graded $B$-resolution of $S$.
	Since $S$ is perfect by assumption, it follows that $G_\bullet:=\Hom_B(F_\bullet, B(-b))[-c]$ yields a minimal graded $B$-resolution of $\omega_S$.
	From the functorial isomorphism $\HL^d(B) \cong {}^*\Hom_T(\Hom_B(B, B(-b)), T)$, we obtain the following isomorphism of graded complexes
	\begin{align*}
		\HL^d(G_\bullet) 
		&\; \cong \; {}^*\Hom_T\left(\Hom_B\left(G_\bullet, B(-b)\right), T\right) \\
		&\; \cong \; {}^*\Hom_T\left(\Hom_B\left(\Hom_B\left(F_\bullet, B(-b)\right)[-c], B(-b)\right), T\right) \\
		&\; \cong \; {}^*\Hom_T\left(F_\bullet[c], T\right).
	\end{align*}
	The spectral sequences coming from the second quadrant double complex $G_{\bullet} \otimes_B C_\mm^\bullet$ converge in the second pages and yield the following isomorphism 
	$$
	\HL^i(\omega_S) \,\cong\, \HH_{d-i}\left(\HL^d(G_\bullet)\right)
	$$
	for all $i \ge 0$.
	After putting the above isomorphisms together, we obtain 
	\begin{align*}
		\HL^i(\omega_S) \,\cong\, \HH_{d-i}\left(\HL^d(G_\bullet)\right) & \,\cong\, \HH_{d-i}\left({}^*\Hom_T\left(F_\bullet[c], T\right)\right)  \\
		& \,\cong\, \HH^{i+c-d}\left({}^*\Hom_T\left(F_\bullet, T\right)\right)\\
		& \,\cong\, {}^*\Ext_T^{i+c-d}\left(S, T\right).
	\end{align*}
	This concludes the second proof of the theorem.
\end{proof}

To obtain a ``true'' generalization of Jouanolou's duality in terms of $S$, we need to relate the local cohomology modules of $S$ and $\omega_S$.
For this purpose, we introduce the following general definition. 

\begin{definition}
	\label{def_weak_Gor}
	In addition to \autoref{setup_duality} suppose that $S$ is Cohen-Macaulay.
	Let $\aaa \subset S$ be a homogeneous ideal and $i \ge 0$ be an integer.
	We say that $S$ is \emph{$i$-weakly Gorenstein with respect to $\aaa$} if there exists a homogeneous element $y \in \omega_S$ such that 
	$$
	\dim\big(\Supp\left(\omega_S/Sy\right) \cap V(\aaa)\big) \,<\, \dim(S) - i 
	$$	
	and $Sy \otimes_S S_\pp \cong \omega_{S_\pp}$ for all $\pp \in \Ass(S)$. We will refer to $y$ as a \emph{weak generator} of $\omega_S.$
\end{definition}

An equivalent condition is that there exists a homogeneous element $y \in \omega_S$ that generates $\omega_S$ at the associated primes of $S$ and at all the primes containing $\aaa$ with codimension $\le i$; in particular, $S$ becomes a Gorenstein ring after localizing at all these primes. However, the weakly Gorenstein property is considerably stronger due to the uniform choice of $y$ that works for every prime in question. In fact, 
if $S$ is $1$-weakly Gorenstein with respect to a nilpotent ideal then $S$ is already Gorenstein. In general, enlarging $\aaa$ weakens the $i$-weakly Gorenstein condition, whereas increasing $i$ strengthens it.  
A simple reinterpretation of the condition is given in the following remark.

\begin{remark}\label{other}
	In addition to \autoref{setup_duality} suppose that $S$ is Cohen-Macaulay. Then $S$ is $i$-weakly Gorenstein with respect to a homogeneous ideal $\aaa \subset S$ if and only if there is a homogeneous element $y \in \omega_S$ such that
	\begin{enumerate}[\rm (i)]
		\item $\HT\left(\Ann_S\left(\omega_S/Sy\right) + \aaa\right) \ge i+1$, and 
		\item $\HT\left(\Ann_S\left(\omega_S/Sy\right)\right) \ge 1$.
	\end{enumerate}
\end{remark}

The usefulness of this definition becomes apparent with the following lemma. 

\begin{lemma}
	\label{lem_weakly_Gor}
	Assume that $S$ is a positively graded Cohen-Macaulay ring.
	Suppose $S$ is $(i+1)$-weakly Gorenstein with respect to a homogeneous ideal $\aaa \subset S$. 
	Let $y \in \omega_S$ be a weak generator of the canonical module. Then we have a graded isomorphism of $\,S$-modules
	$$
	\HH_\aaa^i(\omega_S) \,\cong\, \HH_\aaa^i(S)(-\deg(y)).
	$$
\end{lemma}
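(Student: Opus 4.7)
The plan is to produce the short exact sequence
$$0 \longrightarrow S(-\deg(y)) \xrightarrow{\;\cdot y\;} \omega_S \longrightarrow C \longrightarrow 0,$$
with $C := \omega_S/Sy$, then show that $C$ is Cohen--Macaulay with $\grade(\aaa, C) \ge i+1$, and finally read the conclusion off the long exact sequence of $\aaa$-local cohomology.

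First, to see that multiplication by $y$ is injective, observe that its kernel $K$ embeds in $S(-\deg(y))$, so $\Ass(K) \subseteq \Ass(S) = \mathrm{Min}(S)$ because $S$ is Cohen--Macaulay. At any minimal prime $\pp$ of $S$, the local ring $S_\pp$ is Artinian, and by hypothesis $Sy \otimes_S S_\pp \cong \omega_{S_\pp}$; since the canonical module $\omega_{S_\pp}$ has the same length as $S_\pp$, the surjection $S_\pp \twoheadrightarrow \omega_{S_\pp}$ sending $1$ to $y$ is an isomorphism between modules of equal finite length. Hence $K_\pp = 0$ at every associated prime, forcing $K = 0$.

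Next, the depth lemma applied to the outer terms of the short exact sequence, both Cohen--Macaulay of depth $\dim(S)$, yields $\depth(C) \ge \dim(S) - 1$. The generic-generation clause of the weakly Gorenstein hypothesis, $\dim \Supp(\omega_S/Sy) < \dim(S)$, gives $\dim(C) \le \dim(S) - 1$, so $C$ is Cohen--Macaulay of dimension exactly $\dim(S) - 1$, and in particular equidimensional. For a Cohen--Macaulay module the grade of any ideal equals the codimension of its vanishing locus inside the support, so
$$\grade(\aaa, C) \,=\, \dim(C) - \dim\bigl(\Supp(C) \cap V(\aaa)\bigr) \,>\, (\dim(S)-1) - (\dim(S)-i-1) \,=\, i,$$
where the strict inequality uses the $(i+1)$-weakly Gorenstein hypothesis on $\Supp(\omega_S/Sy) \cap V(\aaa)$. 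Consequently $\HH_\aaa^j(C) = 0$ for every $j \le i$, and in particular $\HH_\aaa^{i-1}(C) = 0 = \HH_\aaa^i(C)$.

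Plugging this vanishing into the long exact sequence of $\HH_\aaa^\bullet$ attached to the short exact sequence above collapses it to the claimed graded isomorphism $\HH_\aaa^i(\omega_S) \cong \HH_\aaa^i(S)(-\deg(y))$. I expect the main subtlety to lie in the grade computation: one needs both halves of the weakly Gorenstein hypothesis working together, with generic generation promoting the depth bound from the depth lemma into Cohen--Macaulayness of $C$, and the codimension condition then converting the Cohen--Macaulay identity $\grade = \mathrm{codim}$ into the bound $\grade(\aaa, C) \ge i+1$. One should also check that the standard grade--codimension identity for Cohen--Macaulay modules remains valid in the graded setting over the $*$local base $T$, but that is routine.
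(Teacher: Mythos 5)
Your proof is correct and follows essentially the same route as the paper's: establish $Sy\cong S(-\deg(y))$ via the generic behaviour of $y$, form the short exact sequence $0\to Sy\to\omega_S\to\omega_S/Sy\to 0$, bound $\grade(\aaa,\omega_S/Sy)$ from below by $i+1$ using the weakly Gorenstein hypothesis, and conclude from the long exact sequence in local cohomology. The only (immaterial) difference is that you obtain the grade bound by first proving $\omega_S/Sy$ is globally Cohen--Macaulay and invoking the grade--codimension formula, whereas the paper localizes at each prime of $\Supp(\omega_S/Sy)\cap V(\aaa)$ and applies the depth lemma there.
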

\begin{proof}
	As the canonical module is faithful, for any $\pp \in \Ass(S)$ we get $\Ann_{S_\pp}(Sy\otimes_S S_\pp) = \Ann_{S_\pp}(\omega_{S}\otimes_S S_\pp) = 0$, and so it follows that $\Ann_S(y) = 0$.
	As a consequence, we have $Sy \cong S(-\deg(y))$.
	From the short exact sequence $0 \rightarrow Sy \rightarrow \omega_S \rightarrow \omega_S/Sy \rightarrow 0$, we obtain the exact sequence in cohomology 
	$$
	\HH_\aaa^{i-1}(\omega_S/Sy)  \rightarrow \HH_\aaa^i(Sy) \rightarrow \HH_\aaa^i(\omega_S) \rightarrow \HH_\aaa^i(\omega_S/Sy).
	$$ 
	Thus, to conclude the proof it suffices to show that $\grade(\aaa ,\omega_S/Sy) \ge i+1$.
	Equivalently, we need to prove that $\depth\left((\omega_S/Sy)_\pp\right) \ge i+1$ for all $\pp \in \Supp(\omega_S/Sy) \cap V(\aaa)$.
	From the definition of weakly Gorenstein we get that $\depth(S_\pp) = \dim(S_\pp) \ge i+2$ for all $\pp \in \Supp(\omega_S/Sy) \cap V(\aaa)$.
	It then follows from the short exact sequence above that $\depth\left((\omega_S/Sy)_\pp\right) \ge i+1$ for all $\pp \in \Supp(\omega_S/Sy) \cap V(\aaa)$, as required.
\end{proof}

\begin{remark}\label{rem_indep-deg}
	If in addition to  the assumption of \autoref{lem_weakly_Gor} one has $i\ge \grade(\aaa)$, then $\deg(y)$ is independent of the choice of $y$. 
\end{remark}
\begin{proof} Write $t=\grade(\aaa)$ and notice that $\HH_\aaa^t(S)\not=0$. Since  $\HH_\aaa^t(S)(-\deg(y))  \cong \, \HH_\aaa^t(\omega_S),$ by \autoref{lem_weakly_Gor}, it follows that $\deg(y)$ only depends on $S$. 
\end{proof}

Finally, we are ready for our promised generalization of Jouanolou duality.

\begin{theorem}[Generalized Jouanolou duality]
	\label{thm_gen_Jou}
	Assume \autoref{setup_duality}.
	Suppose that $S$ is perfect over $B$ of codimension $c$, and that $S$ is $(i+1)$-weakly Gorenstein with respect to $\mm S$.
	Let $y \in \omega_S$ be a weak generator of the canonical module.
	Then we have a graded isomorphism of $\,S$-modules
	$$
	\HL^i(S) \, \cong \, {}^*\Ext_T^{i+c-d}(S, T)\left(\deg(y)\right)
	$$
	for all $i \in \ZZ$.	\end{theorem}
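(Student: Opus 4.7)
The plan is to derive this theorem as an almost immediate consequence of the two results already in place: \autoref{thm_duality_canonical_mod}, which identifies $\HL^i(\omega_S)$ with ${}^*\Ext_T^{i+c-d}(S,T)$ under the perfectness hypothesis, and \autoref{lem_weakly_Gor}, which under the weakly Gorenstein hypothesis provides a shift isomorphism between the local cohomology of $\omega_S$ and that of $S$.

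First, I would observe that although $\HL$ is formally defined as local cohomology with respect to the irrelevant ideal $\mm \subset B$, one has $\HH_\mm^i(M) = \HH_{\mm S}^i(M)$ for any $S$-module $M$, since local cohomology depends only on the radical of the ideal of definition and $\mm$ acts on $M$ through its image $\mm S$. Hence applying \autoref{lem_weakly_Gor} with the ideal $\aaa = \mm S$ — whose hypotheses are exactly the $(i+1)$-weakly Gorenstein assumption with respect to $\mm S$ that the theorem presupposes — gives the graded $S$-isomorphism
$$
\HL^i(\omega_S) \,\cong\, \HL^i(S)\bigl(-\deg(y)\bigr).
$$

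Next, \autoref{thm_duality_canonical_mod} applies because $S$ is perfect of codimension $c$ over $B$, yielding
$$
\HL^i(\omega_S) \,\cong\, {}^*\Ext_T^{i+c-d}(S, T).
$$
Combining the two isomorphisms and shifting by $\deg(y)$ on both sides produces the desired identification
$$
\HL^i(S) \,\cong\, {}^*\Ext_T^{i+c-d}(S, T)\bigl(\deg(y)\bigr).
$$

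There is no real obstacle here: the entire content of the theorem has been packaged into \autoref{thm_duality_canonical_mod} and \autoref{lem_weakly_Gor}, and the weakly Gorenstein hypothesis is precisely tailored to bridge the gap between $\omega_S$ and $S$ in local cohomology. The only mild subtlety worth double-checking is the compatibility of the gradings and the shift direction, so that the $(-\deg(y))$ from the lemma combines correctly with the shift-free statement of \autoref{thm_duality_canonical_mod} to produce $(+\deg(y))$ in the final conclusion.
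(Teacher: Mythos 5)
Your proposal is correct and is exactly the paper's argument: the proof given there is the one-line combination of \autoref{thm_duality_canonical_mod} and \autoref{lem_weakly_Gor}, with the shift $(-\deg(y))$ from the lemma accounting for the $(\deg(y))$ in the conclusion. Your additional remarks on $\HH_\mm^i = \HH_{\mm S}^i$ and the grading compatibility are sound and merely make explicit what the paper leaves implicit.
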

\begin{proof}
	This follows by combining \autoref{thm_duality_canonical_mod} and \autoref{lem_weakly_Gor}.
\end{proof}

\subsection{Perfect pairing and Morley forms}\label{Morley}
In this subsection, our goal is to show that the isomorphism of \autoref{thm_gen_Jou} for $i=0$ and $c=d$ arises from  a perfect pairing given by multiplication. In addition, we want to make this isomorphism  explicit via Morley forms.

%\begin{setup}
%	\label{setup_duality_PP}
%	Let $T$ be a local Gorenstein ring and $S$ be a  standard graded finitely generated $T$-algebra.
%	Let $\mm$ be the graded irrelevant ideal of $S$. 

%\end{setup}

\begin{observation} \label{obs1} 	Assume \autoref{setup_duality}.	The following statements hold: 
	\begin{enumerate}[\rm (i)]
		\item  The module ${}^*\Hom_T(S, T)$ is concentrated in nonpositive degrees.
		\item $\HL^0({}^*\Hom_T(S, T))={}^*\Hom_T(S, T)$.
		\item  $\left[{}^*\Hom_T(S, T)\right]_0\cong T$.
	\end{enumerate}
\end{observation}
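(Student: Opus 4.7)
The proof is a direct computation from the definitions, so my plan is to verify each of the three parts in turn with minimal machinery, exploiting only the positive grading of $S$ and the $B$-module structure inherited through $B \twoheadrightarrow S$.

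For parts (i) and (iii), I would unpack the definition of the graded dual: the degree-$(-j)$ component of ${}^*\Hom_T(S, T)$ is $\Hom_T([S]_j, T)$. Since $S$ is positively graded, $[S]_j = 0$ for $j < 0$, which immediately forces $[{}^*\Hom_T(S,T)]_{-j} = 0$ for $j < 0$, giving (i). For (iii), I would observe that $[S]_0 = T$, so
\[
[{}^*\Hom_T(S,T)]_0 \,=\, \Hom_T([S]_0, T) \,=\, \Hom_T(T, T) \,\cong\, T.
\]

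For part (ii), the task is to show that ${}^*\Hom_T(S, T)$ is $\mm$-torsion (where $\mm \subset B$ acts on $S$, and hence on ${}^*\Hom_T(S,T)$, via the surjection $B \twoheadrightarrow S$). Given a homogeneous element $\varphi \in [{}^*\Hom_T(S,T)]_{-j}$ with $j \ge 0$, viewed as a $T$-linear map $\varphi \colon [S]_j \to T$, and given a homogeneous $x \in \mm$ of degree $e \ge 1$, the element $x\varphi$ is represented by the map $[S]_{j-e} \to T$ sending $s \mapsto \varphi(xs)$. Iterating, any monomial in the $x_i$'s of total degree exceeding $j$ sends $\varphi$ to a map defined on $[S]_{j'}$ with $j' < 0$, which is zero. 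Hence $\mm^k \varphi = 0$ for all sufficiently large $k$, so ${}^*\Hom_T(S,T) = \HL^0\left({}^*\Hom_T(S,T)\right)$.

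There is no real obstacle here; the only subtlety worth spelling out is that the $\mm$-action on ${}^*\Hom_T(S,T)$ comes from the $B$-action on $S$, and that this action raises degrees in the graded dual by the same amount it lowers them in $S$. Once that is observed, all three statements reduce to the positivity of the grading of $S$ together with $S_0 = T$.
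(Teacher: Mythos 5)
Your proof is correct and takes the expected elementary route; the paper states this as an Observation without a written proof, precisely because the argument is the direct unwinding of the definition ${}^*\Hom_T(S,T)_k = \Hom_T(S_{-k},T)$ that you give. Parts (i) and (iii) follow from positivity of the grading and $S_0 = T$, and for (ii) your degree-shift argument correctly shows that a homogeneous functional of degree $-j$ is killed by $\mm^{j+1}$ since each $x_i$ has positive degree.
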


\begin{lemma}\label{LemmaB}
	If $S$ is standard graded, then for every $i\ge 0$, 
	$$ 0:_{{}^*\Hom_T(S, T)}\, \mm^i \;=\; \big[{}^*\Hom_T(S, T)\big]_{\ge -i+1}.
	$$
\end{lemma}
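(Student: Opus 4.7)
The plan is to verify both inclusions by a direct degree count once the graded structures are unraveled. An element $\varphi \in [{}^*\Hom_T(S,T)]_j$ is by definition a $T$-linear map $\varphi \colon S_{-j} \to T$. The $B$-module structure on ${}^*\Hom_T(S,T)$ factors through the surjection $B \twoheadrightarrow S$, and since $S$ is standard graded, the image of $\mm^i$ in $S$ is $(\mm S)^i = S_{\ge i}$. For a homogeneous $y \in S$ of degree $k$, the element $y \cdot \varphi \in [{}^*\Hom_T(S,T)]_{j+k}$ is the map $S_{-j-k} \to T$ given by $s \mapsto \varphi(ys)$.

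For the inclusion $[{}^*\Hom_T(S,T)]_{\ge -i+1} \subseteq 0 :_{{}^*\Hom_T(S,T)} \mm^i$, I would fix $\varphi$ of degree $j \ge -i+1$ and take any homogeneous $y \in S_{\ge i}$ of degree $k \ge i$. Then $-j-k \le (i-1) - i = -1$, so $S_{-j-k} = 0$ and $y \cdot \varphi$ is the zero map. This already gives $\mm^i \varphi = 0$, and note that this inclusion is consistent with the fact that ${}^*\Hom_T(S,T)$ is concentrated in non-positive degrees (\autoref{obs1}(i)).

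For the reverse inclusion, suppose $\varphi$ has degree $j \le -i$ and is annihilated by $\mm^i$. Then $\varphi(ys) = 0$ for every $y \in S_i$ and every $s \in S_{-j-i}$, where $-j-i \ge 0$. Standard gradedness yields $S_i \cdot S_{-j-i} = S_{-j}$, and the vanishing of $\varphi$ on every such product forces $\varphi \equiv 0$ on $S_{-j}$, i.e., $\varphi = 0$. The main obstacle, if one may call it that, is purely bookkeeping: fixing the grading convention on ${}^*\Hom_T(S,T)$ and tracing where the standard-graded hypothesis actually enters, namely in the identity $S_i \cdot S_{-j-i} = S_{-j}$ used in this last step.
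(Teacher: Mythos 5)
Your proof is correct and follows essentially the same route as the paper's: the easy inclusion is a degree count, and for the reverse inclusion one takes a homogeneous $f$ of degree $j\le -i$ killed by $\mm^i$ and observes that, since $S$ is standard graded, $S_i\cdot S_{-j-i}=S_{-j}$ (equivalently, $\mm^i S=S_{\ge i}$), forcing $f=0$. Your write-up just makes the bookkeeping slightly more explicit than the paper's.
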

\begin{proof} Since the other inclusion is clear, we only need to show that 
	$$ 0:_{{}^*\Hom_T(S, T)}\mm^i \;\subset\; \big[{}^*\Hom_T(S, T)\big]_{\ge -i+1}.
	$$
	Let $f$ be a homogeneous element of  $0:_{{}^*\Hom_T(S, T)}\mm^i$. Suppose that  $f $ has degree $j$ with $j\le -i$. It follows that the map $f$ restricted to $S_{\le i-1}$ is zero. 
	On the other hand, $\mm^i f=0$ implies that $f$ restricted to $S_{\ge i}$ is also zero.  
	We conclude that $f=0$. 
\end{proof}

The following setup is used for the remainder of this subsection.

\begin{setup}
	\label{setup_duality_PP2}
	In addition to \autoref{setup_duality}, 
	assume that $S$ is standard graded with $S_0=T,$ perfect over $B$ with $\dim(S)=\dim(T)$,  and  $S$ is $1$-weakly Gorenstein with respect to $\mm S$.
	Write $\AAA$ for $\HL^0(S)$. 
	Let $y \in \omega_S$ be a weak generator of the canonical module
	of  degree $-\delta$.
\end{setup}

The next result describes the isomorphism of \autoref{thm_gen_Jou} (with $i=0$ and $c=d$) in terms of a perfect pairing induced by the natural multiplication map. It also shows, in particular, that $\AAA \not=0$ and $\delta \ge 0$.

\begin{theorem}\label{Gen_perfectpairing} 	Assume \autoref{setup_duality_PP2}.
	The following statements hold$\, : $
	\begin{enumerate}[\rm (i)]
		\item  The $S$-module $\AAA = \HL^0(S)$ is concentrated in degree at most $\delta$.
		\item The $T$-module $\AAA_{\delta}$ is free of rank one.
		\item 
		For $0 \le i \le \delta$, the natural multiplication map
		$$
		\mu\;:\; \AAA_{i} \; \otimes_T \; S_{\delta-i} \; \xrightarrow{\; {\rm mult} \;} \;  \AAA_{\delta}$$	is a perfect pairing that induces an isomorphism
		$$	\nu \,:\,  \AAA_{i} \;\xrightarrow{\;\cong \;}\; \Hom_T\left(S_{\delta-i}, \AAA_{\delta} \right)\, .
		$$
		In addition, if we write $\AAA_{\delta} = T \mathfrak{s}$ and fix an isomorphism  $\omicron: T \mathfrak{s}\xrightarrow{\;\cong \;} T$ with $\mathfrak s \mapsto 1$, then the composition of $\mu$ and $\omicron$, 
		$$
		\AAA_{i} \; \otimes_T \; S_{\delta-i} \; \xrightarrow{\; {\rm mult} \;} \;  \AAA_{\delta}=T \mathfrak{s} \xrightarrow{\; \omicron \;}  \; T$$
		is also a perfect pairing  that induces an isomorphism
		$$
		\nu' \,:\,  \AAA_{i} \;\xrightarrow{\;\cong \;}\; \Hom_T\left(S_{\delta-i}, T\right)\,
		$$
		as in \autoref{thm_gen_Jou}.
		
		\item $0:_S \m=\AAA_{\delta}=T \mathfrak{s}\cong T.$
		%\item 
		%Assume that $T$ is a positively graded Noetherian ring over a field $\, \kk$.
		%Then the generator $\mathfrak{s}$ is homogeneous of degree $\varepsilon \ge 0$, that is $[\HL^0(S)]_{\delta}=T \mathfrak{s} \cong T(-\varepsilon)$. For $0 \le i \le \delta$, the multiplication map 
		%	$$
		%	\mu : [\HL^0(S)]_{i} \; \otimes_T \; S_{\delta-i} \;\rightarrow\;  [\HL^0(S)]_{\delta}=T \mathfrak{s} \rightarrow \; T(-\varepsilon)$$
		%	is a perfect pairing that induces an isomorphism as in \autoref{thm_duality_Jou_bigrad}.
	\end{enumerate}
\end{theorem}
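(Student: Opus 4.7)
The plan is to combine the generalized Jouanolou duality of \autoref{thm_gen_Jou} with the structural observations in \autoref{obs1} to obtain parts (i) and (ii) directly, and then, for part (iii), to identify the abstract isomorphism produced by \autoref{thm_gen_Jou} with the one induced by multiplication in $S$.

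First I would apply \autoref{thm_gen_Jou} with $i = 0$ and $c = d$ (note that the hypothesis $\dim(S) = \dim(T)$ forces $c = d$). Together with $\deg(y) = -\delta$, this yields a graded $S$-module isomorphism
$$
\iota \,:\, \AAA \,=\, \HL^0(S) \,\xrightarrow{\;\cong\;}\, {}^*\Hom_T(S, T)(-\delta).
$$
Part (i) is immediate from \autoref{obs1}(i): the shift by $-\delta$ turns the non-positive degree range of ${}^*\Hom_T(S,T)$ into the range $\le \delta$. Part (ii) follows from \autoref{obs1}(iii): in degree $\delta$, $\iota$ identifies $\AAA_\delta$ with $[{}^*\Hom_T(S,T)]_0 \cong T$, which is free of rank one over $T$. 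This also distinguishes a generator $\mathfrak{s} \in \AAA_\delta$ corresponding to the identity in $\Hom_T(T,T)$.

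For part (iii), the key observation is that $\iota$ is $S$-linear, and the $S$-action on ${}^*\Hom_T(S,T)$ is by precomposition: a homogeneous $b \in S_k$ sends $f \in [{}^*\Hom_T(S,T)]_j$, viewed as a $T$-linear map $S_{-j} \to T$, to the map $S_{-j-k} \to T$, $t \mapsto f(bt)$. Translating this via the shift, for $a \in \AAA_i$ and $b \in S_{\delta - i}$ the equation $\iota(ab) = b \cdot \iota(a)$, read in degree $\delta$ under the identification $[{}^*\Hom_T(S,T)]_0 = \Hom_T(S_0, T) \cong T$ by evaluation at $1$, becomes
$$
\iota_\delta(ab) \,=\, \iota_i(a)(b).
$$
Taking $\omicron = \iota_\delta$ (any other choice differs by a unit of $T$, which is harmless), this exhibits the composition $\omicron \circ \mu$ as the pairing $(a,b) \mapsto \iota_i(a)(b)$, whose induced $T$-linear map $\nu'$ is precisely $\iota_i$, hence an isomorphism. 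The first statement about $\mu$ and $\nu$ then follows by composing with the $T$-linear isomorphism $\Hom_T(S_{\delta-i}, \omicron)^{-1}$.

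The main obstacle will be the bookkeeping in the last paragraph: one has to be careful about the sign convention for the grading of ${}^*\Hom_T(S,T)$ and verify that its natural $S$-action is indeed precomposition with multiplication on $S$, so that the abstract $S$-linearity of $\iota$ translates into the assertion about the multiplication pairing. Once this identification is made, the perfect pairing assertion reduces formally to the fact that $\iota$ is a graded $S$-module isomorphism, which is already given by \autoref{thm_gen_Jou}.
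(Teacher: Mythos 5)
Your proposal is correct and follows essentially the same route as the paper: both deduce (i) and (ii) from \autoref{thm_gen_Jou} together with \autoref{obs1}, and both establish (iii) by transporting the multiplication pairing through the $S$-linear isomorphism $\AAA \cong {}^*\Hom_T(S,T)(-\delta)$, under which it becomes the evaluation pairing $\Hom_T(S_{\delta-i},T)\otimes_T S_{\delta-i}\to T$, which is perfect because the induced map is the identity. The "bookkeeping" you flag (that the $S$-action on the graded dual is precomposition with multiplication) is exactly the point the paper's proof uses implicitly, so there is no gap.
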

\begin{proof} According to  \autoref{thm_gen_Jou} we have a graded isomorphism of $\,S$-modules
	$$
	\AAA \,=\, \HL^0(S) \, \cong \, {}^*\Hom_T(S, T)\left(-\delta\right).
	$$
	Now part (i) and (ii) follow from \autoref{obs1}. 
	The same graded isomorphism of $S$-modules identifies the map $\mu$ with the  multiplication map 
	$$
	\Hom_T(S_{\delta-i}, T) \; \otimes_T \; S_{\delta-i} \; \xrightarrow{\; {\rm mult} \;} \;  \Hom_T(S_0, T)=T \;, 
	$$
	which is a perfect pairing since it induces the identity map
	$$
	\Hom_T\left(S_{\delta-i}, T\right)  \; \xrightarrow{\; {\rm id} \;} \Hom_T\left(S_{\delta-i}, T\right). \qedhere
	$$ 
	Finally, part (iv) follows from \autoref{thm_gen_Jou}, \autoref{LemmaB}, and parts (i) and (iii). \end{proof}

\smallskip

%From now on assume \autoref{setup_duality_PP2} and assume in addition that $T$ is a positively graded Noetherian ring over a field $\kk$. Let $\varepsilon$ be the degree of $\mathfrak s$ as in \autoref{Gen_perfectpairing} part (iv). 
We will now construct explicit inverses of the maps $\nu$ 
%render the isomorphism $\Hom_T\left(S_{\delta-i}, T\right) \to [\HL^0(S)]_{i}$ explicit 
using a generalization of Jouanolou's Morley forms. 
%Our Morley forms are induced by a suitable element of degree $\delta$ in the annihilator of the diagonal ideal $\mathbb D$  of the enveloping algebra $S^e:=S \otimes_TS$. 
To any homogeneous element of degree $\delta$ in the annihilator of the diagonal ideal $\mathbb D$ we associate forms that we call {\it Morley forms} in honor of Jouanolou. 
Recall that the {\it diagonal ideal} $\, \mathbb D$ of the enveloping algebra  $S^e := S\otimes_T S $ is the kernel of the natural multiplication map $S^e \twoheadrightarrow S$. 
This ideal is generated by the elements $\overline{x_i} \otimes 1 -1\otimes \overline{x_i}$, where $\overline{x_i}$ denotes the image of $x_i$ in $S$.

\smallskip

We think of $S^e$ and of ${}^*\Hom_T({}^*\Hom_T(S, T), S)$ as $S- S$-bimodules with $S$ acting on the left and on the right.  
The largest submodules on which  the left and  right $S$-module structures coincide are $0:_{S^e} \mathbb D$ and ${}^*\Hom_S({}^*\Hom_T(S, T), S)$, respectively. 
Consider the homogeneous homomorphism of $S-S$-bimodules
$$ 
S^e \to {}^*\Hom_T\big({}^*\Hom_T(S, T), S\big)
$$
given by $s_1\otimes s_2 \mapsto (f \mapsto s_2 f(s_1))$. Restricting this map we obtain a homogeneous  $S$-linear map
$$ 
0:_{S^e} \mathbb D \to {}^*\Hom_S\big({}^*\Hom_T(S, T), S\big)\, 
$$
(see also \cite{SS}*{proof of Theorem 3.1}, \cite{K}*{Proposition F.9}, \cite{EU}*{proof of Theorem A.1}). From \autoref{obs1}(ii) we see that the target of this map is ${}^*\Hom_S({}^*\Hom_T(S, T), \HL^0(S))\, .$ Hence we obtain a homogeneous $S$-linear map
$$ 
0:_{S^e} \mathbb D \to {}^*\Hom_S\big({}^*\Hom_T(S, T), \AAA\big)\, .
$$
\autoref{thm_gen_Jou} implies
$$  {}^*\Hom_S({}^*\Hom_T(S, T), \AAA)\, \cong {}^*\Hom_S({}^*\Hom_T(S, T), {}^*\Hom_T(S, T))(-\delta)\, ,
$$
and using Hom-Tensor adjointness we obtain
$$
{}^*\Hom_S({}^*\Hom_T(S, T), {}^*\Hom_T(S, T))\cong {}^*\Hom_T(S\otimes_S {}^*\Hom_T(S, T), T)
\cong {}^*\Hom_T({}^*\Hom_T(S, T), T)\, .
$$
The last  graded $S$-module is concentrated in degrees $\ge 0$ and its degree $0$ component is  $T$. It follows that any homogeneous $S$-automorphism of 
${}^*\Hom_T(S, T)$ is the identity map times a unit in $T$. Hence all homogeneous $S$-isomorphisms of degree $\delta$ from ${}^*\Hom_T(S, T)$ to $\AAA$ are equal up to multiplication by a unit in $T$. 

\smallskip

The above discussion yields the following remark.
\begin{remark}\label{remMF}
	Every homogeneous element $\Delta = \sum_{i} s_{i,1} \otimes s_{i,2}$ of degree $\delta$  in $0:_{S^e} \mathbb D$ yields a homogeneous  $S$-linear map  
	$$
	\Phi_{\Delta}: {}^*\Hom_T(S, T) \,\to \, \AAA, \qquad u \mapsto \sum_{i} s_{i,2} u(s_{i,1})
	$$
	of degree $\delta$. If $\Phi_{\Delta}$ is a bijection, then $\Phi_{\Delta}$ is independent of the choice of $\Delta$, up to multiplication by a unit in $T$.
\end{remark}

%The next lemmas guarantee the existence of a suitable element $\Delta$. 
Let $\pi: S \twoheadrightarrow T$ be the homomorphism of $T$-algebras with $\pi(x_i)=0$ for all $i$, and consider the map $\varepsilon =S \otimes_T \pi: S^e \twoheadrightarrow S.$ Write $\eta : S^e \twoheadrightarrow S$ for the natural multiplication map and recall that $\eta(0:_{S^e} \mathbb D) ={\mathfrak d}_N(S/T)$ is the {\it Noether different} of $S$ over $T$, which defines the ramification locus of $S$ over $T$. 
To identify elements $\Delta$ as in \autoref{remMF} that provide the desired isomorphisms, we need to understand the relationship between the three ideals ${\mathfrak d}_N(S/T)$, $ \varepsilon (0:_{S^e } \mathbb D)$, and $0:_S \mm = \AAA_{\delta}$  (the last equality follows from \autoref{Gen_perfectpairing}(iv)). 
Experimental evidence supports the following conjecture:

\begin{conjecture}\label{conj}
	\begin{enumerate}[\rm (i)]  \item $\varepsilon (0:_{S^e } \mathbb D)= 0:_S \mm$.
		\item If $\, T$ is a domain, then ${\mathfrak d}_N(S/T)=({\rm rank}_T S)\cdot  \varepsilon (0:_{S^e } \mathbb D)$.
	\end{enumerate}
\end{conjecture}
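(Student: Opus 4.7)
The plan for part (i) is to establish the inclusion $\varepsilon(0:_{S^e}\mathbb{D})\subseteq 0:_S\mm$ by a direct computation and to realize the abstract isomorphism of \autoref{thm_gen_Jou} concretely as some $\Phi_\Delta$ for the reverse inclusion. For the easy inclusion, take $\Delta\in 0:_{S^e}\mathbb{D}$ and $x\in\mm$: the relation $(x\otimes 1-1\otimes x)\Delta=0$ in $S^e$, together with $\pi(x)=0$, yields $x\varepsilon(\Delta)=0$ upon applying $\varepsilon=\pi\otimes_T S$. For the reverse inclusion, \autoref{Gen_perfectpairing}(ii) tells us that $0:_S\mm=\AAA_\delta$ is a free $T$-module of rank one, so it suffices to exhibit a single $\Delta\in(0:_{S^e}\mathbb{D})_\delta$ whose image $\varepsilon(\Delta)$ is a $T$-module generator. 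The swap involution $\tau$ on $S^e$ preserves $0:_{S^e}\mathbb{D}$ (since it sends $\mathbb{D}$ to itself up to sign), and a direct calculation shows that $\varepsilon(\tau(\Delta))=\Phi_\Delta(\pi)$, where $\pi\in[{}^*\Hom_T(S,T)]_0$ is the augmentation generator of \autoref{obs1}(iii). Hence the task reduces to producing a single $\Delta\in(0:_{S^e}\mathbb{D})_\delta$ for which $\Phi_\Delta:{}^*\Hom_T(S,T)\to\AAA$ is a graded $S$-isomorphism; by the discussion preceding \autoref{remMF}, any such isomorphism is unique up to multiplication by a unit in $T$.

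\textbf{Main obstacle for part (i).} The heart of the argument is to \emph{construct} this explicit $\Delta$. One route is to combine the perfect $B$-resolution of $S$ with the Koszul resolution of $\mathbb{D}$ on its natural generators $\{x_i\otimes 1-1\otimes x_i\}_{i=1}^d$, producing a tractable free $S^e$-resolution of $S$; one can then trace the isomorphism from the proof of \autoref{thm_duality_canonical_mod} through this resolution and identify its degree-$\delta$ representative with some $\Phi_\Delta$. A complementary route is to use the 1-weakly Gorenstein hypothesis to reduce first to the generic point of $T$, where $L=S\otimes_T\operatorname{Frac}(T)$ is Gorenstein Artinian and the classical Casimir element built from dual bases under the socle-coefficient pairing provides $\Delta$ over $\operatorname{Frac}(T)$. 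In either approach, the main difficulty is to show that the natural map $(0:_{S^e}\mathbb{D})_\delta\to\Hom_S({}^*\Hom_T(S,T),\AAA)_\delta$, $\Delta\mapsto\Phi_\Delta$, has image containing a unit in $T$; equivalently, that $\Hom_{S^e}(S,S^e)$ is compatible with the localization $T\to\operatorname{Frac}(T)$ and free of extra $T$-torsion obstructions at closed points.

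\textbf{Part (ii).} Granting part (i), the claim reduces to $\eta(0:_{S^e}\mathbb{D})=r\cdot\varepsilon(0:_{S^e}\mathbb{D})$ with $r=\rank_T S$. I would pass to the generic fiber $L=S\otimes_T K$, $K=\operatorname{Frac}(T)$, which by the 1-weakly Gorenstein hypothesis is Gorenstein Artinian of $K$-dimension $r$. Choosing a homogeneous $K$-basis $\{e_{i,j}\}_{i,j}$ of $L$ (with $e_{i,j}\in L_i$) and its dual basis $\{e_{i,j}^*\}$ under the socle-coefficient pairing $(a,b)\mapsto(\text{coefficient of }\sigma\text{ in }ab)$, associativity of the pairing forces $\Delta_0=\sum_{i,j}e_{i,j}\otimes e_{i,j}^*$ to lie in $0:_{L^e}\mathbb{D}$. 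Since $e_{i,j}e_{i,j}^*$ lives in the one-dimensional piece $L_\delta=K\sigma$ and equals $\sigma$ by definition of the dual basis, one computes $\eta(\Delta_0)=r\sigma$ and $\varepsilon(\Delta_0)=\sigma$. Frobenius duality for the Artinian Gorenstein ring $L$ further identifies $0:_{L^e}\mathbb{D}$ with the cyclic $L$-module $L\cdot\Delta_0$, so $\eta=r\cdot\varepsilon$ holds on all of $0:_{L^e}\mathbb{D}$. To descend back to $S$, both $\eta(0:_{S^e}\mathbb{D})$ and $\varepsilon(0:_{S^e}\mathbb{D})$ sit inside the torsion-free $T$-module $\AAA_\delta\cong T$, so the generic equality forces global equality, provided one again controls $T$-torsion in $\Hom_{S^e}(S,S^e)$, which is the same delicate point as in part (i).
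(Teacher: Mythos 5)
The statement you are trying to prove is not a theorem in the paper: it is explicitly labelled a \emph{conjecture}, introduced with the words ``Experimental evidence supports the following conjecture,'' and the authors prove only partial results towards it --- the easy inclusion $\varepsilon(0:_{S^e}\mathbb D)\subset 0:_S\mm$ (\autoref{elem2}, which is exactly your first computation) and the validity of both parts after tensoring with the total ring of quotients of $T$ (\autoref{elem}, which is essentially your generic-fiber/Casimir-element argument, carried out there via the isomorphism $0:_{S_L^e}\mathbb D_L\cong S_L(-\delta)$ and a citation of \cite{EU} for the Noether different). So your proposal should not be judged as a proof at all, and indeed it is not one: you yourself flag the ``main obstacle'' and never close it.

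That said, your diagnosis of where the difficulty sits is accurate and matches the paper. The reduction of part (i) to exhibiting a single $\Delta\in[0:_{S^e}\mathbb D]_\delta$ with $\varepsilon(\Delta)$ a $T$-generator of $\AAA_\delta=T\mathfrak s$ is correct, and the sticking point is precisely that the best the paper can do (\autoref{elem}(iii) and \autoref{lem_Morley_forms2}(i)) is produce $\Delta$ with $\varepsilon(\Delta)=\alpha\cdot\mathfrak s$ for $\alpha$ a \emph{non zerodivisor} of $T$, not a unit; whether $\alpha$ can always be taken to be a unit is exactly the content of the conjecture, and the paper sidesteps it in \autoref{thm_Morley_forms1} by dividing by $\alpha$ inside the torsion-free module $\AAA_i$ rather than resolving it. Your descent step for part (ii) has the same unclosed gap (as you note), plus a minor imprecision: the containment $\eta(0:_{S^e}\mathbb D)\subset\AAA_\delta$ that you use to compare $\eta$ and $r\cdot\varepsilon$ inside a torsion-free rank-one $T$-module is clear generically (where the annihilator is generated in degree $\delta$ and the socle sits in degree $\delta$) but is not justified over $T$ itself, since applying $\eta$ to $(x\otimes1-1\otimes x)\Delta=0$ gives no information. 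In summary: your partial arguments reproduce the paper's supporting evidence, but the conjecture remains open, and your proposal does not advance beyond what \autoref{elem2} and \autoref{elem} already establish.
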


\noindent
The next two results provide further evidence for this conjecture and show, in particular, that
it holds after tensoring with the total ring of quotients of $\, T$.  

\begin{remark}\label{elem2} %One has ${\mathfrak d}_N(S/T) \subset 0:_S \mm$ and 
	One has  $\varepsilon (0:_{S^e } \mathbb D)\subset 0:_S \mm$. 
\end{remark} 
\begin{proof} This is clear since $\varepsilon(\mathbb D)=\mm$. 
\end{proof}

%The next proposition shows that \autoref{conj} holds after tensoring with the the total ring of quotients of $\, T$.

\begin{proposition}\label{elem} Assume \autoref{setup_duality_PP2}. For   $L$ the total ring of quotients of $\, T$, we let $S_L$ be the standard graded $L$-algebra $S \otimes_T L$, and $S^e_L$ and  $\mathbb D_L$ be the corresponding enveloping algebra and diagonal ideal. 	 The following statements hold$\, : $
	\begin{enumerate}[\rm (i)] 
		\item 
		$0$\, : $_{S^e_L } \mathbb D_L \cong S_L(-\delta) $. 
		
		\item 	For every associated prime $\pp$ of $\, \mm S$, we have $[0:_{S^e_L} \mathbb D_L]_\delta\not\subset (1 \otimes \pp) \, S_L^e\, .
		$	In particular, 
		$$[0:_{S^e} \mathbb D]_\delta\not\subset (1 \otimes \pp)\, S^e\, .
		$$
		\item For every associated prime $\mathfrak q$ of $\, T$, $\varepsilon([0:_{S^e} \mathbb D]_\delta)\not\subset {\mathfrak q} S \, .
		$ In fact, if $\Delta \in [0:_{S^e} \mathbb D]_\delta$ with $\Delta \not\in (1  \otimes \pp)\, S^e$ for every associated prime $\pp$ of $\mm S$, then $\varepsilon(\Delta)\not\in \mathfrak q S\, $ for every associated prime $\mathfrak q$ of $\, T$. 
		\item $\varepsilon(0:_{S^e_L} \mathbb D_L)=\varepsilon([0:_{S^e_L} \mathbb D_L]_\delta)=0:_{S_L} \mm_L\, ;$ in particular,  $\varepsilon(0:_{S^e} \mathbb D)_{\mathfrak q}=\varepsilon([0:_{S^e} \mathbb D]_\delta)_{\mathfrak q}=(0:_S \mm)_{\mathfrak q}\, $ for every associated prime $\mathfrak q$ of $\, S$.
		\item If $\, L$ is a field, then $\, {\mathfrak d}_N(S_L/L)=({\rm dim}_L (S_L))\cdot \varepsilon(0:_{S^e_L} \mathbb D_L)\, .$
	\end{enumerate}
\end{proposition}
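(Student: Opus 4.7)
The plan is to establish part (i) via base change to $L$, and then obtain (ii)--(v) as consequences. For (i), I would first verify that $S_L$ is a finite Gorenstein $L$-algebra with $\omega_{S_L/L} \cong S_L(-\delta)$ generated by the image of $y$. This uses the $1$-weakly Gorenstein hypothesis: $y$ generates $\omega_S$ at every $\pp \in \Ass(S)$, and $\omega_S/Sy$ has $S$-support of dimension strictly less than $\dim S = \dim T$, so its $T$-support lies over non-associated primes of $T$ and therefore vanishes after inverting the non-zerodivisors of $T$. The isomorphism $0:_{S^e_L} \mathbb{D}_L \cong \omega_{S_L/L}$ then follows from the bimodule identification $\Hom_{S^e_L}(S_L, S^e_L) \cong 0:_{S^e_L} \mathbb{D}_L$ combined with the standard duality isomorphism for finite Gorenstein algebras (see, e.g., \cite[Proposition F.9]{K}), giving $0:_{S^e_L}\mathbb{D}_L \cong S_L(-\delta)$ with the correct graded shift.

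For parts (ii)--(iv), I would combine (i) with the identity $\varepsilon((s \otimes 1)\Delta) = \pi(s)\varepsilon(\Delta)$ and an explicit description of $(S/\pp) \otimes_T S$ when $\pp$ is an associated prime of $\mm S$. Since $S/\mm S = T$, every such $\pp$ has the form $\pp = \mathfrak{q} + \mm S$ with $\mathfrak{q} \in \Ass(T)$; tracing through $(S/\pp) \otimes_T S \cong T/\mathfrak{q} \otimes_T S = S/\mathfrak{q} S$ via $a \otimes b \mapsto \pi(a) b$ yields the fundamental equivalence $\Delta \in (\pp \otimes 1) S^e \Leftrightarrow \varepsilon(\Delta) \in \mathfrak{q} S$. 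For (ii), the free generator of $[0:_{S^e_L}\mathbb{D}_L]_\delta$ from (i) has $\varepsilon$-image generating $[0:_{S_L}\mm_L]_\delta \cong L$ by \autoref{Gen_perfectpairing}, hence non-zero modulo every $\mathfrak{q}_L$; the ``in particular'' follows by clearing denominators. Part (iii) is the direct translation via the fundamental equivalence. For (iv), the inclusion $\varepsilon(0:_{S^e_L}\mathbb{D}_L) \subset 0:_{S_L}\mm_L$ is \autoref{elem2}; conversely, $\varepsilon$ applied to the degree-$\delta$ generator produces a nonzero element of the one-dimensional $L$-module $[0:_{S_L}\mm_L]_\delta$, which equals all of $0:_{S_L}\mm_L$ by \autoref{LemmaB} (the latter being concentrated in degree $\delta$). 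Localizing at $\mathfrak{q} \in \Ass(S)$ recovers the stated $S$-level equalities, since such primes survive to $S_L$.

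For (v), with $L$ a field, $S_L$ is a standard graded Artinian Frobenius $L$-algebra of socle degree $\delta$ and dimension $n = \dim_L S_L$. Fix a graded $L$-basis $\{e_1,\ldots,e_n\}$ of $S_L$ with $e_1 = 1$ and $e_i \in \mm S_L$ for $i \ge 2$, and let $\{e_i^*\}$ be the dual basis with respect to a generator $\tau$ of $\Hom_L(S_L,L)$ normalized so that $\tau(\mathfrak{s}) = 1$ for the socle generator $\mathfrak{s}$. The Casimir element $c' = \sum e_i \otimes e_i^*$ lies in $[0:_{S^e_L}\mathbb{D}_L]_\delta$ and satisfies $\varepsilon(c') = e_1^* = \mathfrak{s}$ (by the choice of basis) and $\eta(c') = n\mathfrak{s}$ (from $\tau(\eta(c')) = \sum \tau(e_i e_i^*) = n$ together with one-dimensionality of the socle), giving $\eta(c') = n\varepsilon(c')$. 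An arbitrary element $uc'$ of $0:_{S^e_L}\mathbb{D}_L$ then satisfies $\eta(uc') = u\eta(c') = nu\mathfrak{s} = n\pi(u)\mathfrak{s} = n\varepsilon(uc')$, where the identity $u\mathfrak{s} = \pi(u)\mathfrak{s}$ comes from $(u - \pi(u))\mathfrak{s} = 0$. Hence $\mathfrak{d}_N(S_L/L) = \eta(0:_{S^e_L}\mathbb{D}_L) = n \cdot \varepsilon(0:_{S^e_L}\mathbb{D}_L)$.

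The main obstacle is the careful execution of (i): confirming that the $1$-weakly Gorenstein condition genuinely transports to Gorensteinness of $S_L/L$ with the correct graded shift $-\delta$, and verifying the bimodule identification of $0:_{S^e_L}\mathbb{D}_L$ with $\omega_{S_L/L}$ preserving this shift. Once (i) is secured, parts (ii)--(iv) reduce to bookkeeping with $\varepsilon$ and the fundamental equivalence, and (v) becomes a standard Casimir computation for Frobenius algebras over a field.
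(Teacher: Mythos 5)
Your part (i) follows the paper's route, but note that the duality computation $0:_{S^e_L}\mathbb D_L\cong\Hom_{S^e_L}(S_L,S^e_L)\cong\omega_{S_L}(-2\delta)\cong S_L(-\delta)$ requires $S_L$ to be finite and \emph{flat} over $L$ (so that $\omega_{S^e_L}\cong S^e_L(2\delta)$); since $L$ is only a product of Artinian local Gorenstein rings, not of fields, this is not automatic, and the paper obtains it from perfectness of $S$ over $B$ together with the Auslander--Buchsbaum formula. Your part (iii) and the equivalence $\varepsilon^{-1}(\mathfrak q S)=(\pp\otimes 1)S^e$ match the paper. Your part (v) is a pleasant self-contained alternative: the paper simply quotes $\mathfrak d_N(S_L/L)=(\dim_L S_L)\cdot(0:_{S_L}\mm_L)$ from \cite{EU}, whereas your Casimir computation proves it directly.

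The genuine gap is in (ii) and (iv). For (ii) you assert that the $\varepsilon$-image of the degree-$\delta$ generator of $0:_{S^e_L}\mathbb D_L$ \emph{generates} $[0:_{S_L}\mm_L]_\delta\cong L$, citing \autoref{Gen_perfectpairing}; but that result says nothing about $\varepsilon$ of this generator --- the assertion is precisely the content of part (iv), so the argument is circular. And your proof of (iv) replaces ``generates'' by ``is a nonzero element of the rank-one free $L$-module $[0:_{S_L}\mm_L]_\delta$,'' which does not suffice: $L$ need not be a field or even reduced, and a nonzero element of $L$ need not generate $L$. The missing ingredient is the double-annihilator argument in the Artinian Gorenstein ring $S^e_L$: if $0:_{S^e_L}\mathbb D_L\subseteq(\pp\otimes 1)S^e_L$, then
$\mathbb D_L=0:_{S^e_L}\left(0:_{S^e_L}\mathbb D_L\right)\supseteq 0:_{S^e_L}\left((\pp\otimes 1)S^e_L\right)\supseteq\left((0:_{S_L}\pp)\otimes 1\right)S^e_L$,
and applying the multiplication map forces $0:_{S_L}\pp=0$, impossible in an Artinian ring. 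This gives (ii), hence (iii); then (iv) follows because $\varepsilon([0:_{S^e_L}\mathbb D_L]_\delta)=K\mathfrak s$ for an ideal $K\subseteq L$ which, if proper, would lie in a prime of $L$ and contradict (iii). (Alternatively, carrying out your Casimir construction over general $L$, using a homogeneous basis of the free $L$-module $S_L$ and the dual basis with respect to a generator of $\Hom_L(S_L,L)$, would show directly that $\varepsilon$ of the generator is a unit multiple of $\mathfrak s$; but as written you only deploy that construction when $L$ is a field.)
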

\begin{proof} 
	The ring $L$ is a finite product of Artinian local Gorenstein rings.  
	Since $\dim(S)=\dim(T)$, the $B$-module $S$ is perfect of codimension $d$. Therefore $S_L$ is a perfect module of codimension $d$ over $B_L=B\otimes_T L=L[x_1,\ldots,x_d]$. 
	Thus $S_L$ is a finite module over $L$ of finite projective dimension. 
	Now the Auslander-Buchsbaum formula, applied to the factors of $L$, shows that $S_L$ is flat over $L$. 
	
	By \autoref{setup_duality_PP2}, 
	$\omega_{S_L}\cong S_L \cdot y\cong S_L (\delta).$ 
	Since $S_L$ is flat over $L$, it follows that $\omega_{S_L^e}\cong S_L^e (2 \delta),$  as can be seen from a homogeneous free resolution of $S_L$ over $B_L$. In particular $S_L$ is Gorenstein.

	%Since $S$ is generically Gorenstein, $S_L$ is Gorenstein. 
	%Thus the enveloping algebra $S_L\otimes_L S_L$ is Gorenstein, necessarily Artinian since it is a finite module over $L$.
	
	%By \autoref{setup_duality_PP2}, 
	%$\omega_{S_L}\cong S_L \cdot y\cong S_L (\delta),$
	%which implies  $ as $S_L$ is flat over $L$. 
	
	Now part (i) follows because
	$$ 0:_{S_L^e} \mathbb{D}_L \cong \Hom_{S_L^e} \big(S_L^e/ \mathbb D_L, S_L^e\big) \cong \Hom_{S_L^e} \big(S_L, \omega_{S_L^e}(-2\delta)\big) \cong \omega_{S_L}(-2\delta)\cong S_L(-\delta)\, .
	$$

	As to part (ii), since $[0:_{S^e_L} \mathbb D_L]_\delta$ generates the ideal  $0:_{S^e_L} \mathbb D_L$ by part (i), it suffices to prove   	$0:_{S^e_L} \mathbb D_L \not\subset (1\otimes \pp) \, S_L^e\, .
	$ Suppose the contrary. 
	Since $S^e_L$ is an Artinian Gorenstein ring, it follows that 
	$$ \mathbb D_L = 0 :_{S^e_L} (0 :_{S^e_L} \mathbb D_L) \supset 0 :_{S^e_L} \left((1\otimes \pp ) \, S^e_L\right) \supset \left(1 \otimes (0 :_{S_L} \pp) \right)\, S^e_L\, .
	$$
	Applying the multiplication map $S^e_L \to S_L$ that has $\mathbb D_L$ as its kernel, one sees that $0 :_{S_L} \pp=0$. This is impossible because $S_L$ is Artinian and $\pp S_L\not= S_L$ as $\pp =\qqq +\mm S$ for some associated prime $\qqq$ of $T$. 
	
	If $\qqq$ is an associated prime of $T,$  then $\pp=\qqq+\mm S$ is  an associated prime of $\mm S, $ and $\varepsilon^{-1}(\mathfrak q S)=(1 \otimes \pp)S^e $. 
	Hence we can use part (ii) to establish part (iii). 
	
	By \autoref{elem2}, part (iv) follows once we show that the inclusion  $\varepsilon([0:_{S^e_L} \mathbb D_L]_\delta)\subset 0:_{S_L} \mm_L$ is an equality. \autoref{Gen_perfectpairing}(iv) implies that  $0:_{S_L} \mm_L=L\mathfrak{s}$, hence $\varepsilon([0:_{S^e_L} \mathbb D_L]_\delta)=K\mathfrak{s}$ for some ideal $K$ of $L$. 
	If $K \not=L$, then $K$ is contained in a prime ideal of $L$, contradicting part (iii). 
	
	Let $\mathfrak q$ be an associated, hence minimal, prime of $\, S$. If $\mathfrak q$ contains $\mm S$, then $\mathfrak q$ contracts to a minimal prime of $T$ and the asserted equality locally at $\mathfrak q$ follows from the one just proved. If on the other hand $\mathfrak q$ does not contain $\mm S$, then $(0 :_S \mm)_{\mathfrak q}=0$ and we are done by \autoref{elem2}.

	Part (v) is a consequence of part (iv) and the equality  $ {\mathfrak d}_N(S_L/L)=({\rm dim}_L(S_L))\cdot (0:_{S_L} \mm_L) . $ 
	This equality follows from  \cite{EU}*{Theorems A.1 and A.5} and the fact that if $\dim_L(S_L)$ is a multiple of the characteristic then the trace map ${\rm Tr}:= {\rm Tr}_{S_L/L}$ is zero; indeed ${\rm Tr}(L)=0$ by the assumption on the characteristic and ${\rm Tr}(\mm S_L)=0$ because the elements of $\mm S_L$ are nilpotent.
\end{proof}

We choose an element $\Delta \in [0:_{S^e} \mathbb D]_\delta$ with  $\Delta \not\in (1 \otimes \pp)\, S^e$ for every associated prime $\pp$ of $\mm S$. If the residue field of $T$ is infinite, such an  element exists by \autoref{elem}(ii) and any general element in $[0:_{S^e} \mathbb D]_\delta$ will do. 
We consider $S^e$ as a standard bigraded $T$-algebra with 
$$
\text{bideg}(x_i \otimes 1) = (1, 0) \quad \text{ and } \quad \text{bideg}(1 \otimes x_i) = (0, 1).
$$ Thus we obtain the decomposition 
$$
\Delta \; = \; \sum_{i=0}^{\delta} \morl_{(\delta-i,i)} \quad\text{ where }\quad \morl_{(\delta-i,i)} \in \big[S^e\big]_{(\delta-i,i)}.
$$
We say that $\morl_{(\delta-i,i)}$ is the \emph{$i$-th Morley form associated  to $\Delta$}.
We now list some basic properties of these Morley forms. 
Recall that by \autoref{Gen_perfectpairing}(iv),
$$0:_S \mm \,=\, \AAA_{\delta} \,=\, T \mathfrak{s} \,\cong\, T\, .
$$

\begin{lemma}
	\label{lem_Morley_forms2} 
	The following statements hold$\, : $ 
	\begin{enumerate}[\rm (i)]
		%\item $\morl_{(\delta,0)} = \alpha_1 \cdot \mathfrak{s} \otimes 1 \in \Sym(I)_\delta \otimes_T \Sym(I)_0$ for some unit $\alpha_1 \in \kk$.
		\item $\morl_{(\delta,0)} =  \alpha \cdot \mathfrak{s}  \otimes 1\in S_\delta\otimes_T S_0\, $ for  some non zerodivisor $\alpha$ in  $T$.
		%\item For any $ \in B$, we have that $(L \otimes 1 - 1 \otimes L) \cdot \Delta = 0 \in \Sym(I) \otimes_T \Sym(I)$.
		\item For any $b \in S_l$ with $l\le i$, we have the equality 
		$$
		(b \otimes 1) \cdot \morl_{(\delta-i,i)} \; = \; (1 \otimes b) \cdot \morl_{(\delta-i+l,i-l)}   \; \in S_{\delta-i+l} \otimes_T S_{i}. 
		$$
	\end{enumerate}
\end{lemma}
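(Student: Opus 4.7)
The plan is to exploit the bigraded structure of $S^e = S \otimes_T S$: for each $j$, the single identity $(\overline{x_j} \otimes 1 - 1 \otimes \overline{x_j}) \cdot \Delta = 0$ splits into one equation for every bigrade $(p, q)$ with $p + q = \delta + 1$, and these bigraded components directly yield both assertions.

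Part (ii) comes out first. Both sides of the claimed identity are $T$-linear in $b$, and $S_l$ is generated over $T$ by degree-$l$ monomials in $\overline{x_1}, \ldots, \overline{x_d}$, so by induction on $l$ it suffices to verify the case $b = \overline{x_j}$. Extracting the bigrade-$(i+1, \delta-i)$ component of $(\overline{x_j} \otimes 1 - 1 \otimes \overline{x_j}) \cdot \Delta = 0$ produces exactly $(\overline{x_j} \otimes 1) \cdot \morl_{(i, \delta-i)} = (1 \otimes \overline{x_j}) \cdot \morl_{(i+1, \delta-i-1)}$, as required; the case $l = 0$ is the tautology $b \otimes 1 = 1 \otimes b$ for $b \in T$.

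For (i), use the natural identification $T \otimes_T S_\delta \cong S_\delta$ to write $\morl_{(0,\delta)} = 1 \otimes s$ for a unique $s \in S_\delta$. Extracting the bigrade-$(0, \delta+1)$ component of $(\overline{x_j} \otimes 1 - 1 \otimes \overline{x_j}) \cdot \Delta = 0$ gives $\overline{x_j} s = 0$ in $S$ for all $j$; therefore $s \in 0 :_S \mm S = \AAA_\delta = T\mathfrak{s}$ by \autoref{LemmaB} and \autoref{Gen_perfectpairing}, so $s = \alpha \mathfrak{s}$ for some $\alpha \in T$. To conclude that $\alpha$ is a non zerodivisor, apply the augmentation $\varepsilon = \pi \otimes_T S$: for $i \ge 1$ every term of $\morl_{(i, \delta-i)}$ has its first tensor factor in $\mm S$, whence $\varepsilon\bigl(\morl_{(i, \delta-i)}\bigr) = 0$, while $\varepsilon\bigl(\morl_{(0, \delta)}\bigr) = \alpha \mathfrak{s}$, so $\varepsilon(\Delta) = \alpha \mathfrak{s}$. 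Since $\Delta$ was chosen to avoid $(\pp \otimes 1) S^e$ for every associated prime $\pp$ of $\mm S$, \autoref{elem}(iii) yields $\alpha \mathfrak{s} = \varepsilon(\Delta) \not\in \mathfrak{q} S$ for every associated prime $\mathfrak{q}$ of $T$. If $\alpha$ were a zerodivisor, it would lie in some such $\mathfrak{q}$ (as $T$ is Noetherian), giving $\alpha \mathfrak{s} \in \mathfrak{q} \mathfrak{s} \subset \mathfrak{q} S$, a contradiction.

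The bookkeeping with bigraded components is essentially formal and should require no cleverness beyond matching indices. The delicate point is the non zerodivisor claim on $\alpha$ in (i), which crucially uses the genericity of $\Delta$ encoded in \autoref{elem}(iii); without that specific avoidance condition built into the choice of $\Delta$, the conclusion of part (i) would fail.
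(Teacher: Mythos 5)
Your proposal is correct and follows essentially the same route as the paper, which simply asserts $\alpha\mathfrak{s}=\varepsilon(\Delta)$, invokes \autoref{elem}(iii), and calls part (ii) obvious from $\Delta\in 0:_{S^e}\mathbb D$; you have merely unpacked these terse assertions by extracting bigraded components. One small simplification: in part (ii) you need not reduce to $b=\overline{x_j}$ by induction, since $b\otimes 1-1\otimes b$ lies in $\mathbb D$ for every $b\in S_l$, so the bigraded decomposition of $(b\otimes 1-1\otimes b)\cdot\Delta=0$ gives the identity directly.
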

\begin{proof} As for part (i), notice that $\morl_{(\delta,0)} =\varepsilon(\Delta) \otimes 1.$
	\autoref{elem2} and \autoref{Gen_perfectpairing}(iv) show that
	$\varepsilon(\Delta)=\alpha \cdot {\mathfrak s}$ for some $\alpha\in T$, and $\alpha$ is a non zerodivisor by 
	\autoref{elem}(iii). 
	Part (ii) is obvious since $\Delta \in 0:_{S^e}\mathbb D\, .$
\end{proof}

%We are now ready for the following theorem that makes explicit the perfect pairing induced by the natural multiplication map $\mu : \AAA_i \otimes_T S_{\delta-i} \rightarrow \AAA_\delta$. 
In the following theorem by utilizing our generalized Morley forms, we obtain an explicit inverse of the isomorphism $\nu : \AAA_i \rightarrow \Hom_T(S_{\delta-i}, \AAA_\delta)$ from \autoref{Gen_perfectpairing}(iii).  
For the statement of \autoref{thm_Morley_forms1}, we observe that $\AAA_i$ is $T$-torsionfree owing to the isomorphism $\AAA_i \cong \Hom_T(S_{\delta-i}, T)$ and that the element $\alpha$ of  \autoref{lem_Morley_forms2}(i) is a non zerodivisor in $T$.

\begin{theorem}\label{thm_Morley_forms1} Assume \autoref{setup_duality_PP2}, let $\mathfrak s$ and $\omicron$ be as in \autoref{Gen_perfectpairing}(iii), let $\alpha$ be as in \autoref{lem_Morley_forms2}(ii), and let $\Phi_{\Delta}$ be as in \autoref{remMF}.
	Let $0 \le i \le \delta$.
	The following statements hold:
	\begin{enumerate}[\rm (i)]
		\item For any $u \in \Hom_T(S_{\delta-i}, \AAA_\delta)$, we have that $\Phi_{\Delta}({\omicron}\circ u)\in \alpha \cdot \AAA_i$.
		%$\big(1 \otimes \mu_{\mathfrak s}^{-1} \circ u\big)(\morl_{(i,\delta-i)}) \in \alpha \cdot \AAA_i$.
		We have a $T$-homomorphism 
		$$
		\xi \,:\, \Hom_T(S_{\delta-i}, \AAA_\delta) \rightarrow \AAA_i, \qquad u \mapsto\frac{\Phi_{\Delta}({\omicron}\circ u)}{\alpha}.
		$$
		%$$
		%\xi \,:\, \Hom_T(S_{\delta-i}, \AAA_\delta) \rightarrow \AAA_i, \qquad u \mapsto\frac{\big(1 \otimes \mu_{\mathfrak s}^{-1} \circ %u\big)(\morl_{(i,\delta-i)})}{\alpha}.
		%$$
		\item The $T$-homomorphisms 
		$$
		\nu \,:\, \AAA_i \rightarrow \Hom_T(S_{\delta-i}, \AAA_\delta)  \quad \text{ and } \quad \xi \,:\, \Hom_T(S_{\delta-i}, \AAA_\delta) \rightarrow \AAA_i
		$$
		are inverse to each other.
	\end{enumerate}
\end{theorem}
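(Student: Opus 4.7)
The plan is to reduce both parts to the single identity
\[
b\cdot\Phi_{\Delta}(\omicron\circ u)\;=\;\alpha\, u(b)\quad\text{in } \AAA_\delta
\]
for every $b\in S_{\delta-i}$ and $u\in \Hom_T(S_{\delta-i},\AAA_\delta)$. Once this is proved, (i) will follow by setting $a:=\nu^{-1}(u)\in\AAA_i$ (with $\nu$ the isomorphism from \autoref{Gen_perfectpairing}(iii)) and invoking the injectivity of $\nu$ to obtain $\Phi_{\Delta}(\omicron\circ u)=\alpha\,a\in\alpha\,\AAA_i$; because $\AAA_i$ is $T$-torsionfree and $\alpha$ is a non-zerodivisor, division by $\alpha$ is unambiguous and defines the $T$-linear map $\xi$. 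Part (ii) then follows because the same identity reads $\xi(u)=\nu^{-1}(u)$.

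To prove the identity I would proceed in two steps. First, I would compute $b\cdot\Phi_{\Delta}(\omicron\circ u)$ using $b\cdot \Phi_{\Delta} = \Phi_{(1\otimes b)\Delta}$ (which holds because $\Delta\in 0:_{S^e}\mathbb{D}$ annihilates $b\otimes 1 - 1\otimes b\in\mathbb{D}$). Since $\omicron\circ u$ is supported in degree $\delta-i$ and $b\in S_{\delta-i}$, only the Morley form $\morl_{(\delta,0)}=\sum_j a'_j\otimes b'_j$ (with $b'_j\in T$) contributes, and the $T$-linearity of $u$ and $\omicron$ produces
\[
b\cdot\Phi_{\Delta}(\omicron\circ u)\;=\;\omicron(u(b))\cdot\eta(\morl_{(\delta,0)}).
\]
Thus the identity reduces to the key assertion $\eta(\morl_{(\delta,0)})=\alpha\,\mathfrak{s}$. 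By \autoref{lem_Morley_forms2}(ii) with $c\in S_{\ge 1}$, one has $(c\otimes 1)\cdot\morl_{(\delta,0)}=0$; applying $\eta$ shows $\eta(\morl_{(\delta,0)})\in 0:_S\mm = \AAA_\delta = T\mathfrak{s}$, so $\eta(\morl_{(\delta,0)})=\beta\mathfrak{s}$ for some $\beta\in T$. To match $\beta$ with $\alpha$, I would exploit that $\mathfrak{s}\otimes 1 - 1\otimes\mathfrak{s}\in\mathbb{D}$ (it is sent to $0$ by $\eta$), so $(\mathfrak{s}\otimes 1)\cdot\Delta=(1\otimes\mathfrak{s})\cdot\Delta$. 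Since $\mathfrak{s}\cdot S_{\ge 1}=0$, only $\morl_{(0,\delta)}=1\otimes\alpha\mathfrak{s}$ survives on the left and only $\morl_{(\delta,0)}$ on the right, producing
\[
\mathfrak{s}\otimes\alpha\mathfrak{s}\;=\;\sum_j a'_j\otimes b'_j\mathfrak{s}\quad\text{in }S\otimes_T S.
\]

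To extract the scalar equality $\alpha=\beta$, I would tensor this equation over $T$ with the total ring of quotients $L$. By \autoref{elem} the ring $S_L$ is Artinian Gorenstein; its socle is rank one, lies in the top degree, and contains $\mathfrak{s}\in(S_L)_\delta$, so the top degree is exactly $\delta$ and $(S_L)_\delta=L\mathfrak{s}$. Therefore both sides of the displayed equation live in the rank-one free $L$-module $L\mathfrak{s}\otimes_L L\mathfrak{s}$, and comparing coefficients on $\mathfrak{s}\otimes\mathfrak{s}$ yields $\alpha=\beta$ in $L$; the injection $T\hookrightarrow L$ then forces $\alpha=\beta$ in $T$. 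The hardest part of the argument is precisely this last step: since $\Delta$ is not required to be symmetric under the swap of factors of $S^e$, the scalars attached to $\morl_{(0,\delta)}$ and $\morl_{(\delta,0)}$ could a priori differ, and the decisive observation is that passing to $L$ collapses the top degree of $S_L$ to the rank-one socle, converting the tensor-product relation into a genuine scalar identity.
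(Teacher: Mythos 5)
Your proof is correct, but it takes a genuinely different route from the paper's, and a longer one. The paper evaluates $\Phi_{\Delta}(\omicron\circ\nu(a))$ directly for $a\in\AAA_i$: only $\morl_{(i,\delta-i)}$ contributes, and \autoref{lem_Morley_forms2}(ii) slides $a$ from the right tensor factor to the left, reducing everything to $\morl_{(0,\delta)}=1\otimes\alpha\,\mathfrak{s}$ from \autoref{lem_Morley_forms2}(i); this gives $\Phi_{\Delta}(\omicron\circ\nu(a))=\alpha\, a$ in one display, hence $\xi\circ\nu=\mathrm{id}$, and the surjectivity of $\nu$ (from \autoref{Gen_perfectpairing}(iii)) finishes both parts. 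You instead pair $\Phi_{\Delta}(\omicron\circ u)$ against an arbitrary $b\in S_{\delta-i}$ and slide $b$ from left to right, which lands on the opposite extreme component $\morl_{(\delta,0)}$; this forces you to prove the supplementary identity $\eta(\morl_{(\delta,0)})=\alpha\,\mathfrak{s}$, i.e., that the two one-sided specializations of $\Delta$ agree, which you do via $(\mathfrak{s}\otimes 1)\Delta=(1\otimes\mathfrak{s})\Delta$ and base change to $L$. That identity is a pleasant by-product (a symmetry of $\Delta$ the paper never needs to record), but it is entirely avoidable, and it is the only delicate point of your argument. Two remarks: (a) the cleanest justification that $(S_L)_{>\delta}=0$ and $(S_L)_{\delta}=L\mathfrak{s}$ is not the Gorenstein-socle argument, which needs care when $L$ has nilpotents, but simply that $\mm S_L$ is nilpotent, so $S_L=\HL^0(S)\otimes_T L=\AAA\otimes_T L$ by flat base change, and then \autoref{Gen_perfectpairing}(i)--(ii) give exactly what you want; (b) both proofs ultimately rest on \autoref{Gen_perfectpairing}(iii), the paper exploiting the surjectivity of $\nu$ and yours its injectivity, so neither is more self-contained than the other.
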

\begin{proof}
	For any $a \in \AAA_i$ and the  corresponding multiplication map 
	$$
	\nu(a) \in \Hom_T(S_{\delta-i}, \AAA_\delta),
	$$ 
	we have the equalities 
	\begin{align*}
		\Phi_{\Delta}({\omicron}\circ \nu(a)) \;&=\; 
		\big(\omicron \otimes {\rm id}_S \big)\big((a \otimes 1) \cdot \morl_{(\delta-i,i)}\big) \\
		\;&=\; \big(\omicron \otimes {\rm id}_S \big)\big((1 \otimes a) \cdot \morl_{(\delta,0)}\big) &\quad \text{by \autoref{lem_Morley_forms2}(ii)}\\
		\;&=\; \big(\omicron \otimes {\rm id}_S \big)\big((1 \otimes a) \cdot ( \alpha \cdot \mathfrak s \otimes 1) \big) &\quad \text{by \autoref{lem_Morley_forms2}(i)}\\
		\;&=\; \alpha \cdot a \;\in \; \alpha \cdot \AAA_i.
	\end{align*}
	Since the map $\nu \,:\, \AAA_i \rightarrow \Hom_T(S_{\delta-i}, \AAA_\delta)$ was shown to be an isomorphism in \autoref{Gen_perfectpairing}(iii), we obtain that 
	$$\Phi_{\Delta}({\omicron}\circ u)\in \alpha \cdot \AAA_i \quad \text{ for all } \quad u  \in \Hom_T(S_{\delta-i}, \AAA_\delta)\, .
	$$
	As $\AAA_i$ is $T$-torsionfree and $\alpha$ is a non zerodivisor on $T$, division by $\alpha$ in $\alpha \cdot \AAA_i$ is well-defined. This completes the proof of part (i). 
	
	Notice that the above computation already shows that $\xi \circ \nu=\text
	{id}_{\AAA_i}$. Since $\nu$ is an isomorphism by \autoref{Gen_perfectpairing}(iii), $\xi$ is necessarily the inverse map of $\nu$. 
	%Let $u \in \Hom_T(S_{\delta-i}, \AAA_\delta)$, and by \autoref{Gen_perfectpairing} (iii) choose $a \in \AAA_i$ such that $u = \nu(a)$.
	%Then, up to multiplication by a unit in $\kk$, we obtain the equalities 
	%$$
	%(\nu \circ \xi)(u) \,=\, \nu\big((\xi \circ \nu)(a)\big) \,=\, \nu(a) \,=\, u.
	%$$
	%It follows that $\nu \circ \xi$ equals $\text{id}_{\Hom_T(S_{\delta-i}, \AAA_\delta)}$ up to multiplication by a unit in $T$. 
	%So, part (ii) of the theorem is also complete.
\end{proof}

The map $\xi$ in \autoref{thm_Morley_forms1}(i) can be made even more explicit by
using Morley forms and multiplication of polynomials and inverse polynomials:
There is a natural embedding followed by an isomorphism of graded $B$-modules
$$ \iota: {}^*\Hom_T(S, T) \hookrightarrow {}^*\Hom_T(B, T) \cong T[x_1^{-1}, \ldots, x_d^{-1}] .$$
Clearly, the $S$-module and the $B$-module structure of $\IM(\iota)$ coincide. So if 
$s\in S$ and $h \in \IM(\iota)$, then $s \cdot h = \widetilde{s} \, h,$ where $\widetilde{s} \in B$ is any preimage of $s$ and the multiplication on the right hand side is simply multiplication 
of a polynomial  and an inverse polynomial via the $B$-module structure of $T[x_1^{-1}, \ldots, x_d^{-1}] .$ Tensoring with
$S$ one obtains a homomorphism of graded $B\otimes_TS$-modules
$$\psi: {}^*\Hom_T(S, T) \otimes_TS \longrightarrow  T[x_1^{-1}, \ldots, x_d^{-1}]\otimes_TS,$$
and again the $S^e$-module and the $B\otimes_TS$-module structure of $\IM(\psi)$ coincide. In
concrete terms, if $\beta \in S^e$ and $H \in \IM(\psi),$ then
$$\beta \cdot H = \widetilde{\beta} \, H,$$
where $\widetilde{\beta} \in B\otimes_TS$ is any preimage of $\beta.$ 

With these identifications, the map $\Phi_{\Delta}$ of \autoref{remMF} becomes
$$\Phi_{\Delta}: \ {}^*\Hom_T(S, T) \, \longrightarrow  \, T\otimes_T \AAA \ , \
\ \ \ w \mapsto \Delta \cdot \psi(w \otimes 1)\, ,$$
and with notation as in \autoref{thm_Morley_forms1}(i) we obtain
$$\Phi_{\Delta}({\omicron}\circ u)=\Delta \cdot \psi(({\omicron}\circ u)\otimes 1 ).$$
In this equality, we can replace $\Delta$ by $\morl_{(\delta -i, i)}$
because ${\omicron}\circ u$ is homogeneous of degree $i-\delta.$ Thus
we have proved the following corollary, which is needed in \autoref{sec_last}:

\begin{corollary}\label{cor_Morley_forms} With the assumptions
	of \autoref{thm_Morley_forms1} and notation as in the discussion above, we have
	$$T\otimes_T \AAA_i= {\frac{1}{\alpha}} \cdot \morl_{(\delta -i, i)} \cdot \psi\left(\Hom_T(S_{\delta-i},T)\otimes 1 \right).$$
\end{corollary}

\vspace{.2cm}

\subsection{The case where the coefficient ring $T$ is graded}
\label{subsect_graded_T}

In this short subsection, we deal with the case where $T$ is a graded ring. 
This case is of particular importance due to its applicability in the study of blowup algebras 
that we are going to pursue later. 
The proofs are exactly the same and one only needs to indicate the necessary shifts in bidegree. 
\autoref{def_weak_Gor} of the weak Gorenstein property can be easily adapted to the bigraded setting. 
\begin{setup}
	\label{setup_duality_bigrad}
	Let $T$ be a positively graded Gorenstein ring with $T_0$ a local ring and let $\omega_T \cong T(a)$ with $a \in \ZZ$ be its canonical module.
	Let $B = T[x_1,\ldots,x_d]$ be a bigraded polynomial ring such that $\bideg(t) = (0, \deg(t))$ for all homogeneous $t \in T$ and $\bideg(x_i) = (b_i, 0)$ with $b_i > 0$ a positive integer for all $1 \le i \le d$.
	Let $S=B/\JJ,$ where $\JJ$ is a bihomogenous ideal.
	%be a bigraded algebra given as a quotient of $B$.
	Let $\mm \subset B$ be the  ideal $\mm = (x_1,\ldots,x_d)$.
	Set $b := b_1 + \cdots + b_d$.
\end{setup}

We now restate our generalization of Jouanolou duality in the current bigraded setting.

\begin{theorem}
	\label{thm_duality_can_bigrad}
	Assume \autoref{setup_duality_bigrad}.
	Suppose that $S$ is perfect over $B$ and of codimension $c$.
	Then we have a bigraded isomorphism of $\,S$-modules
	$$
	\HL^i(\omega_S) \, \cong \, {}^*\Ext_T^{i+c-d}\big(S, T\big)(0,a)
	$$
	for all $i\in \ZZ.$
\end{theorem}
\begin{proof}
	Either of the proofs of \autoref{thm_duality_canonical_mod} adapt to this case directly. 
	One only needs to notice that the bigraded canonical module of $B$ is $\omega_B = B(-b, a)$, and so  $\omega_S \cong \Ext_B^c(S, B(-b, a))$.
	%\cong \HH^c\left(\Hom_B\left(F_\bullet, B(-b,a)\right)\right)$ after choosing a minimal bigraded $B$-resolution $F_\bullet$ of $S$.
\end{proof}

\begin{theorem}[Generalized Jouanolou duality]
	\label{thm_duality_Jou_bigrad}
	Assume \autoref{setup_duality_bigrad}.
	Suppose that $S$ is perfect over $B$ of codimension $c$, and that $S$, as a bigraded ring, is $(i+1)$-weakly Gorenstein with respect to $\mm S$.
	If $y \in \omega_S$ is a bihomogeneous weak generator of $\omega_S,$ 
	%element as in \autoref{def_weak_Gor}.
	then there is a bigraded isomorphism of $\,S$-modules
	$$
	\HL^i(S) \, \cong \, {}^*\Ext_T^{i+c-d}\big(S, T\big)\big(\bideg(y) + (0,a)\big).
	$$
\end{theorem}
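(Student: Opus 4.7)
The plan is to mirror the proof of \autoref{thm_gen_Jou} in the bigraded setting, that is, to combine the bigraded duality already established in \autoref{thm_duality_can_bigrad} with a bigraded analog of \autoref{lem_weakly_Gor}. All of the underlying machinery (minimal bigraded free resolutions, bigraded local duality on $B$, and the \v{C}ech complex with respect to $x_1,\ldots,x_d$) carries over from the singly graded case once one tracks the bidegree shifts carefully.

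First I would establish the bigraded refinement of \autoref{lem_weakly_Gor}: under the hypothesis that $S$ is $(i+1)$-weakly Gorenstein with respect to $\mm S$, and with $y \in \omega_S$ a bihomogeneous element as in \autoref{def_weak_Gor}, there is a bigraded $S$-isomorphism
$$
\HL^i(\omega_S) \,\cong\, \HL^i(S)\big(-\bideg(y)\big).
$$
The argument is identical to the one given for \autoref{lem_weakly_Gor}: since the canonical module is faithful and $y$ generates $\omega_S$ at every associated prime of $S$, one has $\Ann_S(y)=0$, hence $Sy \cong S(-\bideg(y))$ as bigraded $S$-modules. Then applying $\HL^{\bullet}(-)$ to the short exact sequence
$$
0 \,\rightarrow\, Sy \,\rightarrow\, \omega_S \,\rightarrow\, \omega_S/Sy \,\rightarrow\, 0
$$
yields a bigraded long exact sequence, and the weakly Gorenstein hypothesis together with the Cohen-Macaulayness of $S$ shows $\grade(\mm S, \omega_S/Sy) \ge i+1$; this vanishing gives the claimed isomorphism with the correct bidegree shift.

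Next I would combine this with \autoref{thm_duality_can_bigrad}, which already provides the bigraded isomorphism
$$
\HL^i(\omega_S) \,\cong\, {}^*\Ext_T^{i+c-d}(S,T)(0,a).
$$
Substituting the first isomorphism into the second and solving for $\HL^i(S)$ yields
$$
\HL^i(S) \,\cong\, {}^*\Ext_T^{i+c-d}(S,T)\big(\bideg(y) + (0,a)\big),
$$
which is exactly the asserted statement.

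The main thing to verify, rather than any genuine difficulty, is consistent bookkeeping of bidegrees: one must record that $y$ is bihomogeneous so that $Sy \cong S(-\bideg(y))$ holds bigradedly, and that the shift $(0,a)$ coming from the bigraded canonical module $\omega_B = B(-b,a)$ of \autoref{setup_duality_bigrad} is correctly propagated through the derivation of \autoref{thm_duality_can_bigrad}. No new ideas are needed beyond those already in the singly graded arguments; in particular, the arguments about $\grade$, flat resolutions, and Hom-tensor adjointness are insensitive to whether the grading is single or bi, as long as the shifts are tracked.
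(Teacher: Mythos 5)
Your proposal is correct and follows exactly the paper's argument: the paper likewise deduces the statement by observing that \autoref{lem_weakly_Gor} holds verbatim in the bigraded setting and then combining it with \autoref{thm_duality_can_bigrad}, with the same bidegree bookkeeping you describe.
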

\begin{proof}
	Since \autoref{lem_weakly_Gor}  also holds in a bigraded setting, the result follows from \autoref{thm_duality_can_bigrad}.
\end{proof}

For the rest of this subsection we assume, in addition to \autoref{setup_duality_bigrad},  that $b_1=\cdots=b_d=1$, $S$ is perfect over $B$ with ${\rm dim}(S)={\rm dim}(T)$, and $S$ is $1$-weakly Gorenstein with respect to $\mm S$. Let $y \in \omega_S$ be a bihomogeneous 
%element as in \autoref{def_weak_Gor} 
weak generator of the canonical module of bidegree $(-\delta, -\gamma)$. 

In this case the element $\mathfrak s$ of \autoref{Gen_perfectpairing} can be chosen to be bihomogeneous of bidegree $(\delta, \gamma -a)$, the map $\nu$ of \autoref{Gen_perfectpairing}(iii) is homogeneous, and the map $\nu'$ of \autoref{Gen_perfectpairing}(iii) is homogeneous of degree $a-\gamma$. If the element $\Delta$ of \autoref{remMF} is bihomogeneous of bidegree $(\delta, \rho)$, then so is the induced map $\Phi_{\Delta}$. 

We consider the enveloping algebra $B\otimes_T B$ as a trigraded ring with 
$$
\text{trideg}(x_i \otimes 1) = (1, 0, 0),\; \text{trideg}(1 \otimes x_i) = (0, 1, 0),\; \text{trideg}(t) = (0,0, {\rm deg} (t)) \text{ for homogeneous } t\in T.
$$
This induces a triple grading on $S^e$. If the element $\Delta \in [0:_{S^e} \mathbb D]_\delta$ is bihomogeneous of bidegree $(\delta, \rho)$, we obtain the decomposition
$$
\Delta \; = \; \sum_{i=0}^{\delta} \morl_{(\delta-i,i)} \quad\text{ where }\quad \morl_{(\delta-i,i)} \in \big[S^e\big]_{(\delta-i, i,\rho)}.
$$
With this grading the element $\alpha$ of \autoref{lem_Morley_forms2} and the isomorphism $\xi$  of \autoref{thm_Morley_forms1} are homogeneous of degrees $a$ and $\gamma-a,$ respectively.

\vspace{.3cm}

\section{Weakly Gorenstein determinantal rings}
\label{sect_det_rings}

In this section, we study the weakly Gorenstein condition for determinantal rings.
Our findings show that under reasonable assumptions the weakly Gorenstein property holds.
These results combined with our generalization of Jouanolou duality (\autoref{thm_duality_Jou_bigrad}) yield new tools to study local cohomology modules of determinantal rings.

\begin{setup}
	\label{setup_det_sect}
	Let $B$ be a Noetherian ring and $\varphi \in B^{m \times n}$ be an $m \times n$ matrix with $m \le n$.
	Let $1 \le t \le m$ and $S$ be the determinantal ring $S := B/I_t(\varphi)$. 
	We consider the following submatrices of $\varphi$
	$$
	\left[
	\begin{array}{c|ccc}
		\delta & &*  & \\
		\hline
		& &  & \\
		*& & * & \\
		& &  & \\
	\end{array}		
	\right], 
	\qquad
	\left[
	\begin{array}{c|ccc}
		& & & \\
		& &  & \\
		\varphi'& & * & \\
		& &  & \\
	\end{array}		
	\right],
	\qquad	
	\left[
	\begin{array}{cccc}
		& & \varphi'' & \\
		\hline
		& &  & \\
		& & * & \\
		& &  & \\
	\end{array}		
	\right]
	$$
	where $\delta \in B^{(t-1) \times (t-1)}$, $\varphi' \in B^{m \times (t-1)}$, and $\varphi'' \in B^{(t-1) \times n}$.
	Let $\Delta := \det(\delta) \in B$.
\end{setup}

We first prove an elementary lemma.

\begin{lemma}
	\label{lem_inclusion_dets}
	We have the inclusion $I_{t-1}(\varphi')\cdot I_{t-1}(\varphi'') S \subset \Delta S$ in $S$.
\end{lemma}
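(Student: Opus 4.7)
My plan is to reduce the claimed ideal inclusion to the stronger elementwise identity
\[
\det\!\bigl(\varphi[I \mid 1,\ldots,t-1]\bigr)\,\cdot\,\det\!\bigl(\varphi[1,\ldots,t-1 \mid J]\bigr) \;=\; \Delta \,\cdot\, \det\!\bigl(\varphi[I \mid J]\bigr) \quad \text{in } S
\]
for every pair of $(t-1)$-element subsets $I \subseteq \{1,\ldots,m\}$ and $J \subseteq \{1,\ldots,n\}$. Since $I_{t-1}(\varphi')$ is generated by the left-hand factors and $I_{t-1}(\varphi'')$ by the right-hand factors, this will at once give $I_{t-1}(\varphi') \cdot I_{t-1}(\varphi'')\,S \subseteq \Delta S$.

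The first step is to establish the matrix identity
\[
\varphi' \cdot \mathrm{adj}(\delta) \cdot \varphi'' \;=\; \Delta \cdot \varphi \quad \text{in } S,
\]
where $\mathrm{adj}(\delta)$ denotes the classical adjugate, so that $\delta \cdot \mathrm{adj}(\delta) = \Delta \cdot I_{t-1}$. For entries $(i,j)$ with $i \le t-1$ or $j \le t-1$ the equality already holds in $B$, by the adjugate relation. For $i \ge t$ and $j \ge t$, a Laplace expansion of the $t$-minor $\det(\varphi[1,\ldots,t-1, i \mid 1,\ldots,t-1, j])$ along its last row identifies it with $\Delta\,\varphi_{ij} - (\varphi'\,\mathrm{adj}(\delta)\,\varphi'')_{ij}$, and this $t$-minor vanishes in $S$ by definition. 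Restricting this matrix identity to rows $I$ and columns $J$, and then passing to determinants of the resulting $(t-1)\times(t-1)$ matrices, multiplicativity of the determinant together with $\det(\mathrm{adj}(\delta)) = \Delta^{t-2}$ will yield
\[
\det\!\bigl(\varphi[I \mid 1,\ldots,t-1]\bigr)\cdot\Delta^{t-2}\cdot\det\!\bigl(\varphi[1,\ldots,t-1 \mid J]\bigr) \;=\; \Delta^{t-1}\cdot\det\!\bigl(\varphi[I \mid J]\bigr) \quad \text{in } S.
\]

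The main obstacle is cancelling $\Delta^{t-2}$ from this relation, since $\Delta$ need not be a non-zero-divisor in $S$ for a general base ring $B$. To sidestep this, I plan to pass to the universal situation: let $B_0 = \mathbb{Z}[x_{ij} : 1 \le i \le m,\; 1 \le j \le n]$ with generic matrix $\Psi = (x_{ij})$, and set $S_0 = B_0 / I_t(\Psi)$. By classical results on generic determinantal rings, $S_0$ is a Cohen-Macaulay integral domain, so the nonzero element $\Delta \in S_0$ is a non-zero-divisor, and cancelling $\Delta^{t-2}$ gives the desired identity in $S_0$, equivalently the polynomial relation
\[
\det\!\bigl(\Psi[I \mid 1,\ldots,t-1]\bigr)\cdot\det\!\bigl(\Psi[1,\ldots,t-1 \mid J]\bigr) - \Delta\cdot\det\!\bigl(\Psi[I \mid J]\bigr) \;\in\; I_t(\Psi)
\]
in $B_0$. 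Being a universal polynomial identity in the entries of $\Psi$, this relation specializes through the evaluation homomorphism $B_0 \to B$, $x_{ij} \mapsto \varphi_{ij}$, to the same identity in the arbitrary Noetherian ring $B$, whence the lemma.
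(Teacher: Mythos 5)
Your argument is correct, and it takes a genuinely different route from the paper. The paper proves the lemma by induction on $t$: after reducing to the generic matrix over $\ZZ$, it uses the fact that $x_{1,1},x_{1,2}$ form a regular sequence on the generic determinantal ring (via an initial-ideal argument) to find, by prime avoidance, a unit combination making $\varphi_{1,1}$ avoid the associated primes of $\Delta S$; it then localizes at $\varphi_{1,1}$ and performs elementary operations to drop to the case $t-1$. You instead produce a closed-form identity: the bordered-determinant expansion shows $\varphi'\cdot\mathrm{adj}(\delta)\cdot\varphi''=\Delta\cdot\varphi$ modulo $I_t(\varphi)$ (the off-diagonal blocks holding already in $B$ by the adjugate relation, the $(i,j)$ entries with $i,j\ge t$ being exactly $t$-minors), and taking $(t-1)\times(t-1)$ submatrices and determinants yields $\det(\varphi[I\mid 1..t{-}1])\cdot\Delta^{t-2}\cdot\det(\varphi[1..t{-}1\mid J])=\Delta^{t-1}\det(\varphi[I\mid J])$ in $S$. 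Your cancellation of $\Delta^{t-2}$ in the generic ring $S_0$ is legitimate since $S_0$ is a domain and $\Delta$, being of degree $t-1$, cannot lie in $I_t(\Psi)$; and the resulting membership in $I_t(\Psi)$ is a universal integral polynomial identity that specializes to any $B$. Both proofs ultimately lean on the generic determinantal ring being a domain, but yours avoids the induction and the localization/prime-avoidance machinery entirely, and it buys something extra: an explicit witness for the membership, namely the identity $\det(\varphi[I\mid 1..t{-}1])\cdot\det(\varphi[1..t{-}1\mid J])=\Delta\cdot\det(\varphi[I\mid J])$ in $S$, which is sharper than the ideal inclusion stated in the lemma.
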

\begin{proof}
	We proceed by induction on $t$.
	The case $t=1$ is vacuous.
	As we have $\varphi_{i,1}\varphi_{1,j} - \varphi_{1,1}\varphi_{i,j} \in I_2(\varphi)$ for all $i, j$, the claim follows for the case $t = 2$.
	
	Suppose that $t \ge 3$.
	Since containments are preserved under extensions, we may assume that $B = \ZZ[x_{i,j}]$ where $x_{i,j}$ are independent variables and $\varphi = (x_{i,j})$ is the generic $m\times n$ matrix.
	Since $S$ is a Cohen-Macaulay domain (see \cite{HOCHSTER_EAGON}*{Corollary 4}) and $\Delta S \neq 0,$ all the associated primes of $\Delta S$ have height one.
	
	Let $P \in \Spec(S)$ be a minimal prime of $(x_{1,1}, x_{1,2})S$. 
	Set $p = P \cap \ZZ$,  $\overline{B} = B \otimes_\ZZ \Quot(\ZZ/p)$, and $\overline{S} = S \otimes_\ZZ \Quot(\ZZ/p)$.
	Notice that $\HT(P) \ge \HT(P\overline{S})$.
	Since $t \ge 3$, the minimal monomial generators of the initial ideal of $I_t(\varphi)\overline{B}$ with any antidiagonal term order do not involve the variables $x_{1,1}$ and $x_{1,2}$ (see, e.g., \cite{MS}*{Theorem 16.28}), and this implies that $x_{1,1}, x_{1,2}$ form a regular sequence on $\overline{S}$ (see, e.g.,  \cite{E}*{Proposition 15.15}).
	Thus $\HT(P \overline{S}) \ge 2, $ and so $\HT(P) \geq 2.$ Since this holds for every $P,$ it follows that $\HT((x_{1,1}, x_{1,2})S) = 2.$
	Hence, there is an element $b \in B$ such that the image of $x_{1,1} + bx_{1,2}$ in $S$ is in no associated prime of $\Delta S$.
	After elementary column operations, we obtain that $\varphi_{1,1}$ is not contained in any of the associated primes of $\Delta S$, and so it suffices to show the containment after localizing at the element $\varphi_{1,1}$.
	Once $\varphi_{1,1}$ is invertible, we can apply standard arguments to reduce to the case $t-1$.
\end{proof}

The following result provides families of determinantal rings that have the weakly Gorenstein property. 
%We say that a matrix $\varphi$ as in \autoref{setup_det_sect} is {\it homogeneous} if there exist 
%integers $a_i$ and $b_j$ so that the nonzero entries $\varphi_{i,j}$ are homogeneous of degrees $b_j-a_i$ for all $i,j.$

\begin{theorem}
	\label{thm_det_weak_Gor}
	In addition to  \autoref{setup_det_sect}, assume that $B$  is  normal, Gorenstein, and local with infinite residue field.
	Suppose that 
	$$
	\HT\left(I_t(\varphi)\right) = (m-t+1)(n-t+1) \quad \text{ and }\quad  \HT\left(I_{t-1}(\varphi)\right) > (m-t+1)(n-t+1).
	$$
	%Set $S=B/I_t(\varphi)$ and 
	Let $\aaa \subset S$ be an ideal.
	
	After elementary row operations, we may assume that $\, \HT\left(I_{t-1}(\varphi'')S\right) > 0$. 
	Then $S$ is $\, i$-weakly Gorenstein with respect to $\aaa ,\,$ for $i = \HT\left(I_{t-1}(\varphi'')S + \aaa \right)-1$. 
	
	After elementary column operations, we may further assume that $\HT\left(I_{t-1}(\varphi')S\right) > 0.$ In this case  $I_{t-1}(\varphi')^{n-m}S$ is the canonical module of $\, S,$ and the image of $\, \Delta^{n-m}$ in this ideal is a weak generator
	of the canonical module.

\end{theorem}
\begin{proof}
	%	The condition $\HT\left(I_t(\varphi)\right) = (m-t+1)(n-t+1) =: c$ implies that $S$ is a Cohen-Macaulay ring.
	Since the row and column spaces of $\varphi \otimes_B S$ have rank $t-1$, we may assume that  $\HT\left(I_{t-1}(\varphi'')S\right) \ge  1$ after elementary row operations and $\HT\left(I_{t-1}(\varphi')S\right) \ge 1$ after elementary column operations.
	Let $B' = B[X]$ where $X = (x_{i,j})$ is a generic $m\times n$ matrix with $x_{i,j}$ independent variables, and set $S' = B'/I_t(X)$.  
	From \cite{Br}*{Theorem} we know the canonical module of $S'$  satisfies the isomorphism $\omega_{S'} \cong I_{t-1}(X')^{n-m}S'$, where $X'$ is the submatrix given by the first $t-1$ columns of $X$ (see also \cite{BV}*{Chapter 8}). The ring $S'$ is Cohen-Macaulay,
	$S \cong S'/I_1(X-\varphi)$, and
	the entries of the matrix $X - \varphi$ form a regular sequence on $S'$ (notice that $\HT(I_t(X))=\HT(I_t(\varphi))$ by assumption, and thus $\dim(S) = \dim(S')-mn$). It follows that
	$$
	\omega_S \, \cong \, \omega_{S'} \otimes_{S'} S \,\cong\,  I_{t-1}(X')^{n-m}S' \otimes_{S'} S \, \surjects\, I_{t-1}(\varphi')^{n-m}S.
	$$
	Thus we  obtain a surjection  
	$$
	\omega_S \surjects I_{t-1}(\varphi')^{n-m}S.
	$$ 
	As $\HT\left(I_{t-1}(\varphi)\right) > (m-t+1)(n-t+1)$, $S$ is generically a complete intersection on the Gorenstein ring $B$, hence generically Gorenstein.
	It follows that $\omega_S$ has rank $1$.
	On the other hand, $I_{t-1}(\varphi')^{n-m}S$ is an ideal of positive grade and hence again a module of rank $1$.
	It follows that the kernel of the above surjection is a torsion submodule of $\omega_S$, and hence zero because $\omega_S$ is a maximal Cohen-Macaulay module.

	Let $y$ be the image of $\Delta^{n-m}$ in $S$.
	Notice that $y \in \omega_S$ under the isomorphism $\omega_S \cong I_{t-1}(\varphi')^{n-m}S$.
	From \autoref{lem_inclusion_dets} it follows that $\Ann_S(\omega_S/Sy) \supset I_{t-1}(\varphi'')^{n-m}S$, and the latter ideal has positive height.
	Thus, by \autoref{other}, $S$ is $i$-weakly Gorenstein with respect to $\aaa$ when $i = \HT\left(I_{t-1}(\varphi'')S + \aaa \right)-1$.
\end{proof}

We believe that simple combinations of \autoref{thm_duality_Jou_bigrad} and \autoref{thm_det_weak_Gor} can lead to a better understanding of local cohomology modules of determinantal rings in several cases of interest. 
To be more precise, we provide a sample that will be useful later (see \autoref{thm_zero_dim}). 

\begin{proposition}
	\label{prop_det_2_by_2}
	Let $R = \kk[x_1,\ldots,x_d]$ and $T = \kk[y_1,\ldots,y_n]$ be standard graded polynomial rings over a field $\kk$.
	Let $\mm = (x_1,\ldots,x_d) \subset R$ be the irrelevant ideal.
	Let $B$ be the standard bigraded polynomial ring $R \otimes_\kk T$.
	Let $f_1,\ldots,f_n$ be a sequence of forms in $R$ of the same degree $D \ge 1$.
	Let $\varphi$ be the $2 \times n$ matrix given by 
	$$
	\varphi \; = \; \left(
	\begin{array}{ccc}
		y_1 & \cdots & y_n \\
		f_1 & \cdots & f_n
	\end{array}
	\right),
	$$
	and set $S = B/I_2(\varphi)$.
	If $\,\HT(I_2(\varphi)) = n-1$, then there is a bigraded isomorphism of $\,S$-modules
	$$
	\HL^i(S) \, \cong \, {}^*\Ext_T^{i+n-1-d}\left(S, T\right)\left(d-(n-1)D, -1\right)
	$$
	for all $0 \le i \le d-1$.
\end{proposition}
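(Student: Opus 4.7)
The plan is to combine \autoref{thm_det_weak_Gor} with the bigraded generalized Jouanolou duality in \autoref{thm_duality_Jou_bigrad}, applied to $\varphi$ with $m = t = 2$.

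First I would verify the hypotheses of \autoref{thm_det_weak_Gor}. The equality $\HT(I_2(\varphi)) = (m-t+1)(n-t+1) = n-1$ is given; the strict inequality $\HT(I_1(\varphi)) > n-1$ holds since $I_1(\varphi) \supseteq (y_1,\ldots,y_n)$ has height $n$ in $B$. Taking $\varphi'' = (y_1,\ldots,y_n)$ to be the first row, $\HT(I_1(\varphi'')S) \ge 1$ without any row operations: a minimal prime of $I_2(\varphi)$ has height $n-1$ and cannot contain $(y_1,\ldots,y_n)$, which is of height $n$ already. With $\aaa = \mm$, the ideal $I_1(\varphi'')S + \mm S$ is the maximal bihomogeneous ideal of $S$, of height $\dim(S) = d+1$. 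Thus \autoref{thm_det_weak_Gor} yields that $S$ is $d$-weakly Gorenstein with respect to $\mm S$, witnessed by the element $y \in \omega_S$ corresponding under $\omega_S \cong I_1(\varphi')^{n-2}S$ to $\Delta^{n-m} = y_1^{n-2}$.

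Next I would pin down the bidegree of $y$ in $\omega_S$ from the Eagon-Northcott complex. Presenting $\varphi$ as a bigraded map $F \to G$ with $F = B(0,-1)^n$ and $G = B \oplus B(D,-1)$ (so that all columns of $\varphi$ are of bidegree $(0,1)$), the Eagon-Northcott complex of $\varphi$ is a minimal $B$-free resolution of $S$, as the entries of $\varphi$ lie in the bihomogeneous maximal ideal. A direct calculation identifies its top term as
$$
\wedge^n F \otimes_B S_{n-2}(G^*) \otimes_B \wedge^2 G^* \;\cong\; \bigoplus_{k=0}^{n-2} B\bigl(-(k+1)D,\; k - n + 1\bigr).
$$
Dualising and twisting by $\omega_B = B(-d,-n)$ shows that $\omega_S$ is minimally generated in bidegrees $(d-jD,\, j)$ for $j = 1, \ldots, n-1$. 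Matching these generators with the monomial generators $y_1^{j-1} f_1^{n-1-j}$ of $I_1(\varphi')^{n-2}S$ uncovers a uniform bidegree shift of $(d-(n-1)D, 1)$ between $\omega_S$ and $I_1(\varphi')^{n-2}S$; in particular, $y = y_1^{n-2}$ lives in $\omega_S$ at $\bideg(y) = (d-(n-1)D,\, n-1)$.

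Finally I would invoke \autoref{thm_duality_Jou_bigrad} with $c = n-1$, with $d$ equal to the number of $x$-variables, and with $a = -n$ (since $\omega_T \cong T(-n)$). For $0 \le i \le d-1$ we have $i+1 \le d$, so the $(i+1)$-weakly Gorenstein hypothesis is in force, and the theorem delivers
$$
\HL^i(S) \;\cong\; {}^*\Ext_T^{i+n-1-d}(S,T)\bigl(\bideg(y) + (0,-n)\bigr) \;=\; {}^*\Ext_T^{i+n-1-d}(S,T)\bigl(d-(n-1)D,\, -1\bigr),
$$
as desired. The main obstacle is the middle step: correctly setting up the bigraded Eagon-Northcott complex for the mixed-degree matrix $\varphi$ and carefully tracking the bidegrees through the dualization and canonical-module twist in order to pin down $\bideg(y)$.
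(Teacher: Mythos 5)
Your overall strategy coincides with the paper's: deduce from \autoref{thm_det_weak_Gor} that $S$ is $d$-weakly Gorenstein with respect to $\mm S$, pin down $\bideg(y)$, and feed this into \autoref{thm_duality_Jou_bigrad} with $c=n-1$ and $a=-n$. The one genuinely different ingredient is how you compute $\bideg(y)$: the paper realizes $I_2(\varphi)$ as a geometric residual intersection of $(f_1,y_1)$ and quotes the graded canonical-module formula $\omega_S\cong (f_1,y_1)^{n-2}S(-d+(n-1)D,-1)$ from \cite{HU2} and \cite{KU}, whereas you read off the generating bidegrees of $\omega_S$ from the bigraded Eagon--Northcott complex and match them against the monomial generators of $(y_1,f_1)^{n-2}S$. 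That computation is correct and more self-contained; just note that the ``matching'' tacitly uses that the isomorphism $\omega_S\cong(y_1,f_1)^{n-2}S$ is bigraded up to a shift. A cleaner way to close this (and the one the paper uses in its descent step) is to observe that your Eagon--Northcott computation gives $\dim_\kk\big[\omega_S\big]_{(d-(n-1)D,\,n-1)}=1$, so any nonzero element of that bidegree is the required generator up to a unit.

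There are, however, two genuine gaps. First, applying \autoref{thm_det_weak_Gor} --- and identifying $y$ with the class of $y_1^{n-2}$ --- requires not only $\HT(I_1(\varphi'')S)\ge 1$, which you check, but also $\HT(I_1(\varphi')S)=\HT((y_1,f_1)S)\ge 1$; otherwise $(y_1,f_1)^{n-2}S$ need not have rank one and the surjection $\omega_S\surjects I_1(\varphi')^{n-2}S$ need not be an isomorphism. This can fail for the given first column: for $d\ge 2$, $n=3$, $f_1=0$, $f_2=x_1^D$, $f_3=x_2^D$ one has $\HT(I_2(\varphi))=2$, yet $(y_1,f_1)=(y_1)$ is contained in a minimal prime of $I_2(\varphi)$. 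The remedy is a general elementary column operation (replace the first column by a general linear combination of the columns), which preserves the shape of $\varphi$ and replaces $y_1$ by a general linear form in the $y_j$'s, so $\Delta^{n-2}$ still has bidegree $(0,n-2)$ and your final answer is unchanged --- but this requires enough scalars in $\kk$. Second, and relatedly, \autoref{thm_det_weak_Gor} assumes an infinite residue field, and your argument never treats finite $\kk$; the paper devotes a separate step to this, extending to an infinite field $\kk'$, using $\dim_{\kk}\big[\omega_{S}\big]_{(d-(n-1)D,\,n-1)}=1$ to choose $y$ over $\kk$, and checking that it base-changes to the correct generator up to a unit. As written, your proof covers only the case of an infinite field together with a sufficiently general first column.
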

\begin{proof}	
	We first show the result when $\kk$ is infinite.
	
	\begin{claim}\label{claim_dual_det_inf_field}
		The claimed isomorphisms exist when $\kk$ is infinite. 			
	\end{claim}
	To prove the claim 
	notice that $\HT\left(I_1(\varphi'')\right)\ge n$ and $\HT\left(I_1(\varphi'')S + \mm S\right) = \HT\left((y_1,\ldots,y_n)S  + \mm S\right)= \dim(S) = d+1.$ Thus \autoref{thm_det_weak_Gor} implies that $S$ is $d$-weakly Gorenstein with respect to $\mm S$.
	
	After elementary column operations over the infinite field $\kk$, we may assume that $y_1$ is a non zerodivisor on $S$.
	Notice that $I_2(\varphi)$ is a geometric $(n-1)$-residual intersection of the ideal $(y_1,f_1)$ because 
	$$
	I_2(\varphi) \, = \, \left(\Delta_{1,2},\ldots, \Delta_{1,n}\right) : (f_1,y_1),
	$$
	$\HT\left(I_2(\varphi)\right) = n-1$ and $\HT\left(I_2(\varphi) + (f_1,y_1)\right) \ge n$, where $\Delta_{1,i}$ denotes the $2\times 2$ minor corresponding to columns $1$ and $i$ (see \cite{HSCM}).
	From the formulas for the canonical module of residual intersections (\cite{HU2}*{Proposition 2.3}, \cite{KU}), we obtain that $\omega_S \cong (f_1,y_1)^{n-2}S\left(-d + (n-1)D, -1\right)$.
	Let $y \in \omega_S$ be the class of $y_1^{n-2}$, which by \autoref{thm_det_weak_Gor} can be chosen as the element that makes $S$ $d$-weakly Gorenstein with respect to $\mm S$.
	Since $\bideg(y) = (0, n-2) - (-d+(n-1)D,-1) = (d-(n-1)D, n-1)$, \autoref{thm_duality_Jou_bigrad} yields
	$$
	\HL^i(S) \, \cong \, {}^*\Ext_T^{i+n-1-d}\left(S, T\right)\left(d-(n-1)D, -1\right)
	$$
	for all $0 \le i \le d-1$, as required.

	\vspace{.2cm}
	We now settle the proposition for an arbitrary field $\kk$.
	Let $\kk'$ be an infinite field containing $\kk$, and set $S' := S \otimes_\kk \kk'$.
	The computation above of the canonical module shows that $\dim_{\kk'}\big(\left[\omega_{S'}\right]_{(d-(n-1)D, n-1)}\big) = 1$.
	This implies that $\dim_{\kk}\big(\left[\omega_{S}\right]_{(d-(n-1)D, n-1)}\big) = 1$.
	Therefore, take an non-zero element $0 \neq y \in \left[\omega_{S}\right]_{(d-(n-1)D, n-1)}$.
	It necessarily follows that, up to multiplication by a unit in $\kk'$, the extension of $y$ into $\omega_{S'} \cong \omega_S \otimes_\kk \kk'$ equals the element chosen in \autoref{claim_dual_det_inf_field} to make $S'$ weakly Gorenstein over $\mm S'$.
	This completes the proof for the general case.
\end{proof}

\section{Weakly Gorenstein symmetric algebras}
\label{sect_weak_Gor_Sym}

The principal motivation of this section is to find a comprehensive family of ideals whose symmetric algebra has the weakly Gorenstein property. 
This first goal will allow us to apply our generalized Jouanolou duality (\autoref{thm_duality_Jou_bigrad}) in several new cases. 
However, along the way, we achieve  more: 
\begin{enumerate}[(i)]
	\item we compute explicitly the canonical module of a large family of symmetric algebras,
	\item when the approximation complex  $\mathcal{Z}_\bullet$ is acyclic, we define a new complex that also resolves the symmetric algebra but has the advantage of having free modules in the last $g-1$ positions, where $g$ is the grade of the ideal, 
	\item when the ideal is an almost complete intersection or  perfect of deviation two, we give an explicit free resolution for the symmetric algebra, provided $\mathcal{Z}_\bullet$ is acyclic. 
\end{enumerate}

Modules of Koszul cycles were used before in \cite{HSV3} to obtain information
about the free resolution of the symmetric algebra of an ideal $I \subset R$ and
about $\, \omega_{\Sym(I)} \otimes_{\Sym(I)} R$.

\smallskip

Throughout this section we assume the following setup.

\begin{setup}
	\label{setup_Sym_weak_Gor}
	Let $(R, \mm)$ be a $d$-dimensional Cohen-Macaulay local ring.
	Let $I = (f_1,\ldots,f_n) \subsetneq R$ be an ideal minimally generated by $n$ elements, and set $g = \HT(I) \ge 1$ to be the height of $I$.
	Let $B = R[y_1,\ldots,y_n]$ be a standard graded polynomial ring.
	There is a surjection of standard graded $R$-algebras $B \twoheadrightarrow \Sym(I)$ mapping $y_i$ to $f_i \in [\Sym(I)]_1.$  Let  $K_\bullet$
	%$$
	%K_\bullet := K(f_1,\ldots,f_n; R) : 0 \rightarrow \wedge^n R^n \rightarrow \cdots \rightarrow \wedge^1R^n \rightarrow \wedge^0R^n \rightarrow 0
	%$$ 
	be the Koszul complex associated to the sequence $f_1,\ldots, f_n$.
	Let $Z_\bullet$ and $H_\bullet$ be the cycles and homologies of $K_\bullet$, respectively.
\end{setup}

%Our work in this part was inspired by \cite{HSV3}, where %$Z_{\bullet}$ was used to provide information about the minimal %free resolution of the symmetric algebra $\Sym (I)$ and about 
%$\omega_{\Sym(I)} \otimes R$.

\begin{definition}
	Let $k \geq 0$ be an integer.
	\begin{enumerate}
		\item [(G)] $I$ satisfies the \emph{condition $G_k$} if 
		$\mu(I_\mathfrak{p}) \le \HT(\pp)$ for all $\pp \in V(I) \subset \Spec(R)$ with $\HT(\pp) \le k-1$.
		\item [(F)] 
		$I$ satisfies the \emph{condition $F_0$} if 
		$\mu(I_\mathfrak{p}) \le \HT(\pp) +1$ for all $\pp\in \Spec(R)$.
		\item [(SD)] $I$ has the \emph{sliding depth condition $\SD_k$} if $\depth(H_i) \ge 
		\min\{d-g, d-n+i+k\}$ for all $i$.
		\item [(SCM)] $I$ is \emph{strongly Cohen-Macaulay} if $H_i$ is a Cohen-Macaulay module for all $i$.		
	\end{enumerate}
\end{definition}

One can check that the conditions strongly Cohen-Macaulay and  $\SD_k$ yield the following inequalities
$$
\depth(Z_i) \, \ge \,  
\begin{cases}
	\min\{d, d-g+2\} & \text{ if $I$ is strongly Cohen-Macaulay}\\
	\min\{d, d-n+i+k+1\} & \text{ if $I$ satisfies $\SD_k$}
\end{cases}
$$
for all $i \ge 0$ (see \cite{HVV}*{p.161}).
Recall that height $2$ perfect ideals and height $3$ Gorenstein ideals (perfect ideals with last Betti number $1$) are strongly Cohen-Macaulay (see \cite{HSCM}*{Proposition 0.3}, \cites{APERY,Gaeta,PS_LINKAGE}, \cite{W_GOR3}).

Let $\mathcal{Z}_\bullet$ be the approximation complex
$$
\mathcal{Z}_\bullet: \quad 0 \rightarrow Z_{n-1} \otimes_R B(-n+1) \rightarrow \cdots \rightarrow Z_i \otimes_R B(-i) \rightarrow \cdots \rightarrow Z_1 \otimes_R B(-1) \rightarrow Z_0 \otimes_R B
$$
corresponding to the sequence $f_1,\ldots,f_n,$ which is a complex of graded $B$-modules.
For more details regarding approximation complexes the reader is referred to \cite{HSV}.
For the sake of completeness, we include a couple of well-known results regarding symmetric algebras. 

\begin{lemma}
	\label{lem_ht_LL_with_F0}
	If $\,I \subset R$ has $F_0$ on the punctured spectrum of $R$, then $\dim(\Sym(I))  = \max\left\{n, d+1\right\}$.
\end{lemma}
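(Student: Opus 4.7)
The plan is to apply the Huneke--Rossi dimension formula for the symmetric algebra of a finitely generated module over a Noetherian ring, which gives
\[
\dim(\Sym(I)) \;=\; \sup\{\dim(R/\pp) + \mu(I_\pp) \,:\, \pp \in \Spec(R)\}.
\]
Once this formula is in hand, the proof becomes a direct computation, combining $F_0$ on the punctured spectrum with the fact that Cohen--Macaulay local rings are equidimensional and catenary.

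For the upper bound I would split the supremum according to whether $\pp = \mm$ or $\pp \neq \mm$. At the maximal ideal we simply have $\dim(R/\mm) + \mu(I_\mm) = 0 + n = n$. For any other prime, the hypothesis $F_0$ on the punctured spectrum gives $\mu(I_\pp) \le \HT(\pp) + 1$, and since $R$ is Cohen--Macaulay and therefore equidimensional and catenary we have $\dim(R/\pp) + \HT(\pp) = d$. Combining these two relations yields $\dim(R/\pp) + \mu(I_\pp) \le d + 1$ for every $\pp \neq \mm$, so the supremum is at most $\max\{n, d+1\}$.

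For the lower bound, the contribution $n$ is immediate from $\pp = \mm$. To produce $d+1$, I would use that $g = \HT(I) \ge 1$ so $I$ contains a regular element. Then the Rees algebra $\Rees(I)$ has dimension exactly $d+1$, and since it is an epimorphic image of $\Sym(I)$, we get $\dim(\Sym(I)) \ge d+1$. Putting the two bounds together yields the claimed equality.

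There is no real obstacle here beyond invoking the right classical inputs (Huneke--Rossi and the catenarity of Cohen--Macaulay local rings); the only subtle point is recognizing that, while $F_0$ is assumed only on the punctured spectrum, the maximal ideal $\mm$ is handled separately in the supremum and contributes precisely the term $n$, so no global $F_0$ is needed.
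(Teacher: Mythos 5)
Your proof is correct and follows essentially the same route as the paper, namely the Huneke--Rossi dimension formula for $\Sym(I)$; you simply supply the details the paper leaves implicit. One small remark: for the lower bound $d+1$ you do not need the detour through the Rees algebra---taking $\pp$ a minimal prime of $R$ in the Huneke--Rossi supremum gives $\mu(I_\pp)+\dim(R/\pp)=1+d$ directly, since $\HT(I)\ge 1$ forces $I\not\subset\pp$ and $R$ Cohen--Macaulay forces $\dim(R/\pp)=d$.
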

\begin{proof}
	By the Huneke-Rossi formula (see \cite[Theorem 2.6]{HR}, \cite[Theorem 1.2.1]{V}), we have 
	$$
	\dim(\Sym(I)) = \sup\left\{\mu(I_\pp) + \dim(R/\pp) \mid \pp \in \Spec(R)\right\}.
	$$
	Since $I$ has $F_0$ on the punctured spectrum of $R$, we deduce  $\dim(\Sym(I)) = \max\left\{\mu(I), \dim(R)+1\right\} = \max\left\{n, d+1\right\}$.
\end{proof}

\begin{lemma}
	\label{lem_Sym_CM}
	If $\,I \subset R$ satisfies the conditions $\SD_0$ and  $F_0$, then the following statements hold$\, : $ 
	\begin{enumerate}[\rm (i)]
		\item $\Sym(I)$ is Cohen-Macaulay. 
		\item The approximation complex $\mathcal{Z}_\bullet$ is acyclic.
	\end{enumerate}
\end{lemma}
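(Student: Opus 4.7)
The plan is to first establish the acyclicity of the approximation complex $\mathcal{Z}_\bullet$, and then derive the Cohen-Macaulayness of $\Sym(I)$ as a consequence, since an acyclic $\mathcal{Z}_\bullet$ provides a near-resolution of $\Sym(I)$ over $B$ whose depth can be estimated from the sliding depth hypothesis.

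For part (ii), I would appeal to the classical result of Herzog--Simis--Vasconcelos (see \cite{HSV}): under sliding depth and a local bound on the number of generators of $I$, the approximation complex $\mathcal{Z}_\bullet$ is acyclic. The argument is a standard application of the Peskine--Szpiro acyclicity lemma. One localizes at a prime $\pp \subset R$ and performs induction on $n - \mu(I_\pp)$: the condition $F_0$ means that at each $\pp$ one has $\mu(I_\pp) \le \HT(\pp)+1$, which controls where Koszul-type regularity kicks in; and the sliding depth condition $\SD_0$ provides the depth estimates on the cycle modules $Z_i$ required to verify the depth hypotheses in the acyclicity lemma. Together these exactly furnish the acyclicity of $\mathcal{Z}_\bullet$ as a complex of $B$-modules.

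For part (i), once $\mathcal{Z}_\bullet$ is acyclic, it resolves $H_0(\mathcal{Z}_\bullet) = \Sym(I)$ over $B$. Breaking $\mathcal{Z}_\bullet$ into short exact sequences and iterating the depth lemma yields
$$
\depth_B(\Sym(I)) \;\ge\; \min_{0 \le i \le n-1}\bigl\{\depth_B(Z_i \otimes_R B) - i\bigr\}.
$$
Since $B = R[y_1,\ldots,y_n]$ is polynomial over $R$, one has $\depth_B(Z_i \otimes_R B) = \depth_R(Z_i) + n$. The sliding depth bound $\SD_0$ implies $\depth_R(Z_i) \ge \min\{d,\,d-n+i+1\}$ for all $i \ge 0$ (with $Z_0 = R$ contributing $\depth = d$). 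Substituting gives $\depth_B(Z_i \otimes_R B) - i \ge \min\{d+n-i,\, d+1\} \ge d+1$ for every $0 \le i \le n-1$, so $\depth(\Sym(I)) \ge d+1$. On the other hand, \autoref{lem_ht_LL_with_F0} gives $\dim(\Sym(I)) = \max\{n, d+1\}$; but applying $F_0$ at $\mm$ yields $n = \mu(I) \le d+1$, hence $\dim(\Sym(I)) = d+1$. Combining, $\Sym(I)$ is Cohen-Macaulay.

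The main obstacle is part (ii): verifying acyclicity of $\mathcal{Z}_\bullet$ is the technical heart of the lemma, because the terms $Z_i$ are generally not free, so one cannot directly invoke the standard acyclicity criterion for complexes of free modules; one must delicately combine local depth estimates from $\SD_0$ with the inductive reduction afforded by $F_0$. Once that is in hand, part (i) is a clean depth-lemma bookkeeping, with the only subtlety being the observation that $F_0$ forces $n \le d+1$, thereby collapsing the potential dimension $\max\{n,d+1\}$ to exactly the lower bound $d+1$ computed from depth.
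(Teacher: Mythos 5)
Your proposal is correct and matches the paper, which disposes of both parts simply by citing Herzog--Simis--Vasconcelos \cite{HSV}; the acyclicity of $\mathcal{Z}_\bullet$ under $F_0$ and $\SD_0$ is exactly the classical result there, proved as you describe. Your added derivation of (i) from (ii) --- the depth chase along $\mathcal{Z}_\bullet$ using $\depth_B(Z_i\otimes_R B)=\depth_R(Z_i)+n$ and the $\SD_0$ bound, combined with $n\le d+1$ from $F_0$ at $\mm$ and \autoref{lem_ht_LL_with_F0} --- is the standard argument and is correct.
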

\begin{proof}
	See for instance \cite[Theorem 5.4, Theorem 10.1]{HSV}.
\end{proof}

We now construct a new complex that will allow us to compute the canonical module of $\Sym(I)$ in many cases. 
%It seems that 
Apparently, the existence of this quite useful complex has been unnoticed. Its construction was inspired to us by \cite[Theorem 5.8]{HSV3}.

\begin{proposition}
	\label{prop_modified_Z_complex}
	Let $\mathfrak{D}_{p,q}$ be the double complex 
	\begin{equation*}		
		\begin{tikzpicture}[baseline=(current  bounding  box.center)]
			\matrix (m) [matrix of math nodes,row sep=2.5em,column sep=2.3em,minimum width=2em, text height=1.8ex, text depth=0.25ex]
			{
				0 & 0 & 0 &  \wedge^nR^n \otimes_R B(-n+g-1)   \\
				\vdots & \vdots & \vdots & \vdots \\
				0 & \wedge^nR^n \otimes_RB(-n+2) & \cdots & \wedge^{n-g+3}R^n \otimes_RB(-n+g-1) \\
				\wedge^nR^n \otimes_RB(-n+1) & \wedge^{n-1}R^n \otimes_RB(-n+2) & \cdots & \wedge^{n-g+2}R^n \otimes_RB(-n+g-1) \\
			};						
			\path[-stealth]
			(m-4-1) edge node [above] {$\partial_\mathbf{y}$} (m-4-2)
			(m-4-2) edge node [above] {$\partial_\mathbf{y}$} (m-4-3)
			(m-4-3) edge node [above] {$\partial_\mathbf{y}$} (m-4-4)
			(m-3-3) edge node [above] {$\partial_\mathbf{y}$} (m-3-4)
			(m-3-2) edge node [above] {$\partial_\mathbf{y}$} (m-3-3)
			(m-1-4) edge node [right] {$\partial_\mathbf{f}$} (m-2-4)
			(m-2-4) edge node [right] {$\partial_\mathbf{f}$} (m-3-4)
			(m-3-4) edge node [right] {$\partial_\mathbf{f}$} (m-4-4)
			(m-3-2) edge node [right] {$\partial_\mathbf{f}$} (m-4-2)
			;
		\end{tikzpicture}	
	\end{equation*}
	where $\mathfrak{D}_{p,q} = \wedge^{n-g+2+p+q}R^n\otimes_R B(-n+g-1-p)$, and $\partial_\mathbf{f}$ and $\partial_\mathbf{y}$ are the differentials of the Koszul complexes $K(f_1,\ldots,f_n; B)$ and $K(y_1,\ldots,y_n; B)$, respectively.
	Let $\mathfrak{T}_\bullet := {\rm Tot}(\mathfrak{D}_{\bullet, \bullet})$ be the total complex 
	$$
	\mathfrak{T}_\bullet:  \quad 0 \;\rightarrow\; \bigoplus_{j=1}^{g-1} \left(\wedge^nR^n \otimes_R B(-n+g-j)\right) \;\rightarrow\; \cdots \;\rightarrow\; \wedge^{n-g+2}R^n \otimes_RB(-n+g-1) \;\rightarrow\; 0
	$$
	of $\mathfrak{D}_{\bullet, \bullet}$.
	We can splice together $\mathfrak{T}_\bullet$ and the truncated complex $0 \rightarrow \mathcal{Z}_{n-g} \rightarrow \cdots \rightarrow \mathcal{Z}_0 \rightarrow 0$ to obtain a complex of graded $B$-modules
	$$
	\mathfrak{L}_\bullet: \quad 0 \rightarrow \mathfrak{T}_{g-2} \rightarrow \cdots \rightarrow \mathfrak{T}_{1} \rightarrow \mathfrak{T}_{0} \rightarrow \mathcal{Z}_{n-g} \rightarrow \cdots \rightarrow \mathcal{Z}_0 \rightarrow 0.
	$$
	The complex $\mathfrak{L}_\bullet$ is acyclic if and only if $\, \mathcal{Z}_\bullet$ is.
\end{proposition}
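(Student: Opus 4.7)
The plan is to identify $\mathfrak{L}_\bullet$ as the total complex of a bicomplex obtained from $\mathcal{Z}_\bullet$ by replacing each upper term $\mathcal{Z}_i$ (for $n-g+1 \le i \le n-1$) by a Koszul free resolution, and then deduce acyclicity via the standard spectral sequence argument.

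Concretely, under the Koszul vanishing $H_j = 0$ for $j > n-g$ (which holds since $H_j$ is supported in $V(I)$ and therefore satisfies $\dim H_j \le d-g$, while the depth conditions underlying the acyclicity of $\mathcal{Z}_\bullet$ yield $\depth H_j > d-g$ in this range), each upper $\mathcal{Z}_i$ admits a free resolution via the truncated Koszul complex
$$
0 \to \wedge^n R^n \otimes_R B(-i) \to \wedge^{n-1}R^n \otimes_R B(-i) \to \cdots \to \wedge^{i+1}R^n \otimes_R B(-i) \twoheadrightarrow \mathcal{Z}_i \to 0,
$$
with maps given by $\partial_\mathbf{f}$. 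Lifting the approximation complex differential $\partial_\mathbf{y}$ to these resolutions row by row (possible by freeness in positive rows) and splicing with the lower truncation $\mathcal{Z}_{n-g} \to \cdots \to \mathcal{Z}_0$ produces a bicomplex whose total complex, indexed so that the entry at column $i$ and row $j$ lies at homological position $i+j$, is exactly $\mathfrak{L}_\bullet$: the $\wedge^n R^n \otimes_R B(-n+j)$ terms for $j = 1,\ldots,g-1$ assemble into $\mathfrak{T}_{g-2}$ at position $n-1$, the $\wedge^{n-1}R^n$ terms into $\mathfrak{T}_{g-3}$ at position $n-2$, and so on down to $\mathfrak{T}_0 = \wedge^{n-g+2}R^n \otimes_R B(-n+g-1)$ at position $n-g+1$. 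The splicing map $\mathfrak{T}_0 \to \mathcal{Z}_{n-g}$ then arises as the composition of the Koszul augmentation $\wedge^{n-g+2}R^n \otimes_R B(-n+g-1) \twoheadrightarrow \mathcal{Z}_{n-g+1}$ with the approximation complex differential $\mathcal{Z}_{n-g+1} \to \mathcal{Z}_{n-g}$.

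The spectral sequence of this bicomplex obtained by first taking column ($\partial_\mathbf{f}$) cohomology collapses at $E_1$ to a single row equal to $\mathcal{Z}_\bullet$ with its $\partial_\mathbf{y}$ differential, because each column is exact by construction. Therefore the $E_2$ page is the homology of $\mathcal{Z}_\bullet$, which vanishes in positive degree by hypothesis, and so $\mathfrak{L}_\bullet$ is acyclic. The main obstacle is the careful combinatorial verification that the total complex matches $\mathfrak{L}_\bullet$ on the nose — this requires tracking wedge degrees, $B$-twists, and signs (the latter handled via the anticommutation $\partial_\mathbf{f}\partial_\mathbf{y} + \partial_\mathbf{y}\partial_\mathbf{f} = 0$ on $\wedge^\bullet R^n \otimes_R B$) — together with the independent justification of the Koszul vanishing $H_j = 0$ for $j > n-g$ from the acyclicity of $\mathcal{Z}_\bullet$ and the Cohen-Macaulay setup.
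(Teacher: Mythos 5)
Your proposal is essentially the paper's own argument: you build the same double complex out of the truncated Koszul resolutions of $\mathcal{Z}_i$ for $i\ge n-g+1$ (with $\partial_{\mathbf{y}}$ as the horizontal differential, made into a bicomplex by the anticommutation with $\partial_{\mathbf{f}}$ rather than by any abstract lifting), splice via the augmentation followed by the $\mathcal{Z}$-differential, and conclude by computing homology of the columns first; this is exactly how the paper proceeds.

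The one point to correct is your justification of the vanishing $H_j=0$ for $j>n-g$. This should not be derived from ``the depth conditions underlying the acyclicity of $\mathcal{Z}_\bullet$'': the proposition does not assume any sliding depth hypothesis, and the construction of $\mathfrak{L}_\bullet$ (in particular the exactness of the columns away from the bottom row and the existence of the splice map) must hold \emph{unconditionally}, with the acyclicity of $\mathcal{Z}_\bullet$ entering only in the final implication. The correct and much simpler justification is the depth sensitivity of the Koszul complex: since $R$ is Cohen--Macaulay, $\grade(I)=\HT(I)=g$, so $H_j(K_\bullet)=0$ for all $j>n-\grade(I)=n-g$. With that substitution your argument is complete and coincides with the paper's.
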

\begin{proof}
	Since the Koszul complex is depth sensitive and $g = \grade(I)$, it follows that $H_i = 0$ for all $i \ge n-g+1$.
	Hence, after computing homology by columns in the double complex $\mathfrak{D}_{\bullet,\bullet}$ we  get that the only non-zero row is the bottom one, which is then given by  
	$$
	0 \rightarrow \mathcal{Z}_{n-1} \rightarrow \cdots \rightarrow \mathcal{Z}_{n-g+2} \xrightarrow{\Psi} \mathcal{Z}_{n-g+1} \rightarrow 0.
	$$
	This implies that $\HH_i(\mathfrak{T}_\bullet) = \HH_{n-g+1+i}(\mathcal{Z}_\bullet)$ for all $i \ge 1$ and that $\HH_0(\mathfrak{T}_\bullet) = \Coker(\Psi)$. 
	Since $\mathcal{Z}_\bullet$ is a complex, there is a natural map from $\Coker(\Psi)$ to $\Ker(\Phi)$, where $\mathcal{Z}_{n-g} \xrightarrow{\Phi} \mathcal{Z}_{n-g-1} \rightarrow \cdots \rightarrow \mathcal{Z}_0$.
	Therefore, we can always make the claimed splicing, and $\mathfrak{L}_\bullet$ is acyclic 
	if and only if $\mathcal{Z}_\bullet$ is.
\end{proof}

The following theorem contains the main results of this section.

\begin{theorem}
	\label{thm_Sym_weak_Gor}
	Assume \autoref{setup_Sym_weak_Gor} where $R$  is Gorenstein with infinite residue field. 
	Suppose that the ideal $I \subset R$ has height $g = \HT(I) \ge 2$ and satisfies the conditions $F_0$ and $\SD_1$. After changing the generators $f_1, \ldots, f_n$ of $\, I$ the element $y_1$ is a non zerodivisor on $\Sym(I),$ and
	the following statements hold$\,:$ 
	\begin{enumerate}[\rm (i)]
		\item $\omega_{\Sym(I)} \cong \left(f_1,y_1\right)^{g-2}\Sym(I)(-1).$
		\item $\Sym(I)$ is $d$-weakly Gorenstein with respect to the ideal $\mm \, \Sym(I),$ and the image of $y_1^{g-2}$ in $\omega_{\Sym(I)}$ is a weak
		generator of the canonical module of degree $g-1.$
	\end{enumerate} 
\end{theorem}
\begin{proof}
	The condition $F_0$ implies, in particular, that $n \le d+1$.
	From \autoref{lem_ht_LL_with_F0} and \autoref{lem_Sym_CM} we have that $\Sym(I)$ is a Cohen-Macaulay ring of dimension $d+1$ and that the  approximation complex $\mathcal{Z}_\bullet$ is acyclic. 
	As $\dim(S/(y_1,\ldots,y_n)S) = \dim(R) = d,$ it then follows that $\grade\left((y_1,\ldots,y_n)S\right) \ge 1$.
	So, after possibly changing the generators of $I$, we can assume that $y_1$ is a non zerodivisor on $S,$ as asserted.
	
	The condition $\SD_1$ yields the following lower bounds for the depth of the Koszul cycles 
	$$
	\depth(Z_i) \, \ge \, \min\left\{d, d-n+i+2\right\}  \quad \text{for all $i$}.
	$$
	This implies that $\depth(\mathcal{Z}_i) \ge  \min\left\{d+n, d+i+2\right\} $.
	We consider the complex of graded $B$-modules
	$$
	\mathfrak{L}_\bullet: \quad 0 \rightarrow \mathfrak{T}_{g-2} \rightarrow \cdots \rightarrow \mathfrak{T}_{1} \rightarrow \mathfrak{T}_{0} \rightarrow \mathcal{Z}_{n-g} \rightarrow \cdots \rightarrow \mathcal{Z}_0 \rightarrow 0
	$$
	constructed in \autoref{prop_modified_Z_complex}, which is acyclic by the current assumptions.
	To simplify notation, set $S:=\Sym(I)$.
	
	Denote by $(\mathfrak{L}_\bullet, \partial_\bullet)$ the complex $\mathfrak{L}_\bullet$ with its differential. 
	Let $U_i = \Coker(\partial_{i+1})$.
	For all $0 \le i \le n-g$, we have the short exact sequence 
	$$
	0 \rightarrow U_{i+1} \rightarrow \mathcal{Z}_i \rightarrow U_i \rightarrow 0
	$$
	that gives the induced exact sequence 
	$$
	\Ext_B^{n-1-i-1}(U_{i+1}, \omega_B) \rightarrow \Ext_B^{n-1-i}(U_i, \omega_B) \rightarrow \Ext_B^{n-1-i}(\mathcal{Z}_i, \omega_B).
	$$
	For $0 \le i \le n-g$, we have $\Ext_B^{n-1-i}(\mathcal{Z}_i, \omega_B) = 0$ since $n-i-1 > \max\{0,n-i-2\} \ge d+n - \depth(\mathcal{Z}_i)$.
	Thus we obtain the surjection 
	$$
	\Ext_B^{n-1-i-1}(U_{i+1}, \omega_B) \surjects \Ext_B^{n-1-i}(U_i, \omega_B)
	$$
	for all $0 \le i \le n-g$.
	By composing these surjections, we obtain a surjective homomorphism of graded $B$-modules
	$$
	\Ext_B^{g-2}(U_{n-g+1}, \omega_B) \surjects \Ext_B^{n-1}(U_0, \omega_B) = \Ext_B^{n-1}(S, \omega_B) \cong \omega_{S}.
	$$
	The complex $0 \rightarrow \mathfrak{T}_{g-2} \rightarrow \cdots \rightarrow \mathfrak{T}_{1} \rightarrow \mathfrak{T}_{0} \rightarrow 0$ gives a homogeneous free $B$-resolution of $U_{n-g+1}$.

	If $g = 2$, then $U_{n-g+1}$ is equal to the free $B$-module $B(-n+1)$.
	In this case we have a surjection $\Hom_B(U_{n-g+1}, \omega_B) \surjects  \omega_S$, hence $\omega_S$ is generated by one homogeneous element of degree $1;$ this means that $\omega_S \cong S(-1)$ and, in particular, $S$ is Gorenstein.
	Hence the proof is complete for the case $g = 2$.
	
	Assume that $g \ge 3$. We are going to show that 
	$\Ext_B^{g-2}(U_{n-g+1}, \omega_B),$ after tensoring with $S$ and 
	factoring out $S$-torsion, maps isomorphically to both $\omega_S$ and 
	$(f_1,y_1)^{g-2}S(-1),$ which will finish the proof of (i).

	The module $\Ext_B^{g-2}(U_{n-g+1}, \omega_B)$ is the cokernel of the map 
	$$
	\Hom_B(\mathfrak{T}_{g-3}, \omega_B) \rightarrow \Hom_B(\mathfrak{T}_{g-2}, \omega_B).
	$$
	The map $\mathfrak{T}_{g-2} \rightarrow \mathfrak{T}_{g-3}$ is given explicitly as
	$$
	\bigoplus_{j=1}^{g-1}B(-n+g-j) \xrightarrow{\;\;  \;\;} \bigoplus_{j=1}^{g-2}B^n(-n+g-j),
	$$
	and its matrix representation with respect to the standard bases is 
	$$
	\mathfrak{A} \;=\;	\left[
	\begin{array}{cccccc}
		f_1 & y_1 & 0 & \cdots & 0 & 0\\
		\vdots & \vdots & \vdots & \vdots & \vdots & \vdots\\
		(-1)^{n-1}f_{n} & (-1)^{n-1}y_{n} & 0 & \cdots & 0 & 0\\
		\hline
		0 & -f_1 & y_1 & \cdots & 0 & 0\\
		\vdots & \vdots & \vdots & \vdots & \vdots & \vdots\\
		0 & (-1)^{n}f_{n} & (-1)^{n-1}y_{n} & \cdots & 0 & 0\\			
		\hline
		\vdots & \vdots & \vdots & \vdots & \vdots & \vdots\\
		\hline
		0 & 0 & 0 & \cdots & (-1)^{g-3}f_1 & y_1\\
		\vdots & \vdots & \vdots & \vdots & \vdots & \vdots\\
		0 & 0 & 0 & \cdots & (-1)^{n+g-4}f_{n} & (-1)^{n-1}y_{n}\\	 		
	\end{array}		
	\right] \ .
	$$
	Let $\mathfrak{C} = \mathfrak{A}^T$ and 
	$$
	\Omega \; := \; \Coker\left(\bigoplus_{j=1}^{g-2} \omega_B^n(n-g+j)  \xrightarrow{\;\; \mathfrak{C} \;\;} \bigoplus_{j=1}^{g-1}\omega_B(n-g+j)\right). 
	$$
	Denote by $w_j$ the generator of $\Omega$ corresponding to the $j$-th column of $\mathfrak{C},$ and write $^{-}$ for images in $S$. Since in $S$ we have the relations
	$\overline{f_iy_j} = \overline{f_jy_i},$  we obtain a surjective homomorphism
	of graded $B$-modules
	$$
	\Omega \twoheadrightarrow (f_1,y_1)^{g-2}S(-1), \;\;\text{ where } \;\; w_j \mapsto (-1)^{j \choose 2}\, \overline{f_1^{j-1}y_1^{g-j-1}}.
	$$
	Thus we have surjective homomorphisms
	of graded $S$-modules
	$$
	\Omega \otimes_B S \twoheadrightarrow (f_1,y_1)^{g-2}S(-1)
	\qquad \text{ and } \qquad
	\Omega \otimes_B S \, \twoheadrightarrow \, \omega_S.
	$$
	
	%Since $\dim(S/(y_1,\ldots,y_n)S) = \dim(R) = d$ and $S$ is a Cohen-Macaulay ring of dimension $d+1$, it follows that $\grade\left((y_1,\ldots,y_n)S\right) \ge 1$.
	%So, after possibly changing the generators of $I$, we can assume that $\overline{y_1}$ is a non zerodivisor on $S$.
	The matrix $\mathfrak{A}$ has size $(g-2)n \times (g-1)$, and $g \geq 3.$ So by the pigeonhole principle, every $(g-1)\times (g-1)$ submatrix of $\mathfrak{A}$ contains, after possibly multiplying one column by $-1,$ a $2\times (g-1)$ submatrix of the form 
	$$
	\left(
	\begin{array}{cccccccc}
		0 & \cdots & 0 & (-1)^{i_1-1}f_{i_1} & (-1)^{i_1-1}y_{i_1} & 0 & \cdots & 0\\
		0 & \cdots & 0 & (-1)^{i_2-1}f_{i_2} & (-1)^{i_2-1}y_{i_2} & 0 & \cdots & 0
	\end{array}
	\right).
	$$
	Accordingly, $I_{g-1}(\mathfrak{C}) \subset \left(\left\{f_iy_j - f_jy_i\right\}_{i,j}\right) \subset B$, hence $I_{g-1}(\mathfrak{C} \otimes_B S)=0$.
	Notice that 
	$$\overline{y_1^{g-2}} \in I_{g-2}(\mathfrak{C} \otimes_B S)
	$$ 
	and $\overline{y_1^{g-2}}$ is a non zerodivisor on $S$, therefore $\rank_S(\mathfrak{C} \otimes_B S) = g-2$ and $\rank_S(\Omega \otimes_B S) = 1$.
	The canonical module $\omega_S$ is faithful and the epimorphic image of a module of rank $1$, hence $\omega_S$ is an $S$-module of rank $1$.
	The ideal $(f_1,y_1)^{g-2}S$ contains the non zerodivisor $\overline{y_1^{g-2}}$, and so it is a torsion-free $S$-module of rank $1$.
	Finally, since $\omega_S$ is always torsion-free, we obtain the isomorphisms
	$$
	\omega_S \; \cong \; \frac{\Omega \otimes_B S}{\text{tor}\left(\Omega \otimes_B S\right)} \; \cong \; (f_1,y_1)^{g-2}S(-1).
	$$
	This concludes the proof of part (i) of the theorem.
	
	We now concentrate on part (ii) of the theorem, which becomes straightforward after having computed the canonical module $\omega_S$.
	
	Since $(\overline{y_1},\ldots,\overline{y_n}) \cdot \overline{f_1}  \subset (\overline{y_1})$ in $S$, it follows that 
	$$
	(\overline{y_1},\ldots,\overline{y_n})^{g-2} \,\subset \, \Ann_S\left((\overline{f_1},\overline{y_1})^{g-2}/(\overline{y_1})^{g-2}\right) \,=\,  \Ann_S\left(\omega_S/Su_1\right),
	$$
	where $u_1$ denotes the image in $\omega_S$ of the element $w_1 \in \Omega.$
	As a consequence, we get 
	$$
	\HT\left(\Ann_S\left(\omega_S/Su_1 \right)\right) \ge 1 \qquad \text{ and } \qquad \HT\left(\Ann_S\left(\omega_S/Su_1 \right) + \mm S\right) \ge d+1.
	$$ 
	Therefore, $S$ is $d$-weakly Gorenstein with respect to $\mm S$, and this establishes the remaining part (ii).
\end{proof}

\begin{remark}
	It should be mentioned that the proof of \autoref{thm_Sym_weak_Gor} works under the weaker assumption that $R$ is a Cohen-Macaulay ring with canonical module $\omega_R$.
	In that case, we have the formula 
	$$
	\omega_{\Sym(I)} \,\cong\, \omega_R\left(f_1,y_1\right)^{g-2}\Sym(I)(-1)
	$$
	when $g = \HT(I) \ge 2$ and $I \subset R$ satisfies $F_0$ and $\SD_1$.
\end{remark}

\begin{remark}
	\label{rem_no_weak_Gor_nnn}
	In addition to the hypotheses of \autoref{thm_Sym_weak_Gor}
	assume that $\mu(I)=d+1.$ An interesting question is whether $\Sym(I)$ is weakly Gorenstein with respect to $\nnn$, where $\nnn = (y_1,\ldots,y_n) \subset B$.
	The natural choice for making $S:=\Sym(I)$ weakly Gorenstein with
	respect to $\nnn$ is to choose the element $f_1^{g-2}$ instead of $y_1^{g-2}$ in the proof of \autoref{thm_Sym_weak_Gor}.
	However, this choice does not work. 
	Indeed, notice that $\HT(\mm S) = 0$ and $y_1^{g-2}$ is a non zerodivisor on $S$, so the image of $f_1^{g-2}$ cannot generate $(f_1,y_1)^{g-2}S_\pp = S_\pp$ for all $\pp \in \Ass(S)$.
	The same argument shows that no element in $\mm S$ would work.
	On the other hand, in \autoref{thm_Jou_dual_Sym} we do consider the local cohomology modules $\HH_\nnn^i(S)$ for $i \ge 2$ by using further duality results.
\end{remark}

\begin{remark}
	Our computation of the canonical module of $\Sym(I)$ extends or complements known results in the literature.
	In \cite[Theorem 5.7.8]{V}, by utilizing computations with divisor class groups, the formula of \autoref{thm_Sym_weak_Gor}(i) was obtained. 
	However, the result of \cite[Theorem 5.7.8]{V} further requires $\Sym(I)$ to be a normal domain (which is a strong condition; in fact, in our applications $\Sym(I)$ will typically not be a domain).
	In \cite[Corollaries 2.5 and 2.9]{HSV2}, a formula for the canonical module $\omega_{\Rees(I)}$ of $\Rees(I)$ is obtained 
	assuming that $I$ is generically a complete intersection, $\Rees(I)$ is Cohen-Macaulay, and $\gr_I(R)$ is Gorenstein; that formula has the same form as our formula for $\omega_{\Sym(I)}$.
	We exploit this connection between $\omega_{\Sym(I)}$ and $\omega_{\Rees(I)}$ in the next corollary to deduce that $I$ is of linear type. Recall that an ideal
	$I$ is said to be {\it of linear type} if the natural map $\Sym(I) \twoheadrightarrow \Rees(I)$ is an isomorphism. A version of \autoref{cor_beautiful} is proved in \cite[Theorem 3.8]{SUV}, with different 
	methods and the hypothesis that $I$ be strongly Cohen-Macaulay as opposed to $F_0$ and $SD_1.$

\end{remark}

\begin{corollary}
	\label{cor_beautiful}
	In addition to \autoref{setup_Sym_weak_Gor} 
	assume that$\, :$
	\begin{enumerate}[\rm (a)]
		\item $I \subset R$ satisfies  the conditions $F_0$ and $\SD_1$, and  $I$ is generically a complete intersection $($a complete intersection locally at each of its associated primes$)$ with $g = \HT(I) \ge 2.$
		\item $\gr_I(R)$ is a Gorenstein ring.
	\end{enumerate} 
	Then $I$ is of linear type and strongly Cohen-Macaulay.
\end{corollary}
\begin{proof} Notice that $R$ is Gorenstein because $\gr_I(R)$ is (see, e.g., \cite[proof of Proposition 11.16]{HIO}).
	%To prove that $I$ is of linear type, we may assume by induction on $d$ that $I$ is of linear type locally on the punctured spectrum (the condition $\SD_1$ localizes, see \cite[p.676]{HSV3}). 
	We first prove that $I$ is of linear type.
	By  \autoref{lem_ht_LL_with_F0},
	we have $\dim\left(\Sym(I)\right) = d+1 = \dim\left(\Rees(I)\right)$.
	After dualizing, the surjection $\Sym(I) \twoheadrightarrow \Rees(I)$ yields the natural inclusion 
	$$
	\varphi:	\omega_{\Rees(I)} \,\hookrightarrow \, \omega_{\Sym(I)}.
	$$   	
	Since $I$ is generically a complete intersection with $g \geq 1$ and $\gr_{I}(R)$ is Gorenstein, \cite[Theorem 2.1]{SUV} shows that $\Rees(I)$ is Cohen-Macaulay. 
	With these hypotheses it follows from
	\cite[Corollaries 2.5 and 2.9]{HSV2} that $\omega_{\Rees(I)} \cong (f_1,y_1)^{g-2}\Rees(I)(-1).$ Combining this with \autoref{thm_Sym_weak_Gor}(i) we obtain a natural homomorphism of graded $\Sym(I)$-modules
	$$\psi:		\omega_{\Sym(I)} \cong (f_1,y_1)^{g-2}\Sym(I)(-1) \;\twoheadrightarrow\; (f_1,y_1)^{g-2}\Rees(I)(-1) \cong \omega_{\Rees(I)}.
	$$
	Since $g \geq 2,$ the ideal $I$ is of linear type locally in codimension $1,$ and so
	both $\varphi$ and $\psi$ are isomorphisms locally at every prime ideal of $R$ of height $1.$
	%on the punctured spectrum of $R.$
	%Since $I$ is of linear type locally on the punctured spectrum of $R$, both $\varphi$ and $\psi$ are isomorphisms locally on the punctured spectrum of $R.$
	
	We claim that the composition $\psi \circ \varphi$ is an isomorphism. Since $\Rees(I)$ is Cohen-Macaulay, $\End(\omega_{\Rees(I)})$ is naturally isomorphic to 
	$\Rees(I). $ As $\psi \circ \varphi \in
	\left[\End(\omega_{\Rees(I)})\right]_0,$ it follows that $\psi \circ \varphi$ is multiplication by an element $a \in \left[\Rees(I)\right]_0=R$. 
	But $\psi \circ \varphi$ is an isomorphism locally at every prime ideal of $R$ of 
	height $1,$ so the element $a$ is a unit in $R$ locally at every such prime. 
	%punctured spectrum of $R$. Since $d\geq 2,$ 
	%By our inductive assumption, $a$ is a unit in $R$ locally on the punctured spectrum of 
	%$R$. Since $d\geq 2,$ 
	This can only happen if $a$ is a unit, which shows that $\psi \circ \varphi$ is an isomorphism. 
	
	Thus $\omega_{\Rees(I)}$ is isomorphic to a direct summand of $\omega_{\Sym(I)}$. Recall that $\Sym(I)$ is Cohen-Macaulay by  \autoref{lem_Sym_CM}(i). Therefore $\Spec(\Sym(I))$ is connected in codimension one by Hartshorne's connectedness theorem (see, e.g., \cite[Theorem 18.12]{E}), and so $\omega_{\Sym(I)}$
	is indecomposable (see, e.g., \cite[Theorem 3.6]{HH}). Thus the $\Sym(I)$-modules $\omega_{\Sym(I)}$ and $\omega_{\Rees(I)}$ are isomorphic and hence have the same annihilators. As canonical modules of unmixed rings are faithful, we conclude that the natural
	surjection $\Sym(I) \twoheadrightarrow \Rees (I)$ is an isomorphism, as asserted.
	
	Now $I$ satisfies $SD_0$, $I$ is of linear type and generically a complete intersection, and $\gr_I(R)$ is Gorenstein. With
	these hypotheses \cite[Corollary 3.11]{SUV} implies that $I$ is also strongly Cohen-Macaulay.
\end{proof}

Finally, we give an explicit free resolution for the symmetric algebra when the ideal is an almost complete intersection or a perfect ideal of deviation two, both under the assumption that   $\mathcal{Z}_\bullet$ is acyclic.
The \emph{deviation} of $I \subset R$ is defined as $d(I) := \mu(I) - \HT(I) = n-g$; one says that $I$ is an almost complete intersection when $d(I) \leq 1$.

\begin{theorem}
	\label{thm_res_Sym}
	Assume \autoref{setup_Sym_weak_Gor}.
	Suppose that the approximation complex $\mathcal{Z}_\bullet$ is acyclic and let 
	$$
	F_\bullet : \;\; \cdots \rightarrow F_2 \rightarrow R^{n} \xrightarrow{(f_1,\ldots,f_n)} R \rightarrow 0
	$$ 
	be a free $R$-resolution of $\,R/I$.
	Let $\mathfrak{T}_\bullet$ be the acyclic complex of free $B$-module defined in \autoref{prop_modified_Z_complex}.
	The following statements hold$\,:$
	\begin{enumerate}[\rm (i)]
		\item If $I \subset R$ is an almost complete intersection, then a homogeneous free $B$-resolution of $\, \Sym(I)$ is given by 
		$$
		\mathfrak{F}_\bullet: \;\; \cdots \rightarrow \mathfrak{F}_i \rightarrow  \cdots \rightarrow   \mathfrak{F}_1 \rightarrow \mathfrak{F}_0 \rightarrow 0
		$$
		where $\mathfrak{F}_0 = B$, $\mathfrak{F}_1 = F_2 \otimes_{R} B(-1)$, and $\mathfrak{F}_i = \mathfrak{T}_{i-2} \,\oplus\, \left(F_{i+1}\otimes_{R} B(-1)\right)$  \,for $i \ge 2.$
		\smallskip
		\item If $I \subset R$ is a perfect ideal of deviation two and $F_\bullet : 0 \rightarrow F_g \rightarrow \cdots \rightarrow F_2 \rightarrow R^{n} \xrightarrow{(f_1,\ldots,f_n)} R \rightarrow 0$ is a free $R$-resolution of $R/I$, then a homogeneous free $B$-resolution of $\, \Sym(I)$ is given by 
		$$
		\mathfrak{F}_\bullet: \;\; \cdots \rightarrow \mathfrak{F}_i \rightarrow  \cdots \rightarrow   \mathfrak{F}_1 \rightarrow \mathfrak{F}_0 \rightarrow 0
		$$
		where $\mathfrak{F}_0 = B$, $\mathfrak{F}_1 = F_2 \otimes_{R} B(-1)$, and $\mathfrak{F}_i = \mathfrak{T}_{i-3} \oplus \big(\big(F_{i+1} \oplus K_{i+1} \oplus F_{g-i+2}^*\big) \otimes_R B(-1)\big)$  \,for $i \ge 2$.
	\end{enumerate}
\end{theorem}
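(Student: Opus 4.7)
The central device in both parts is the acyclic complex $\mathfrak{L}_\bullet$ of \autoref{prop_modified_Z_complex}, which under our hypothesis resolves $\Sym(I)$ and is already free in its last $g-1$ positions. It remains to replace the non-free tail $\mathcal{Z}_0=B,\mathcal{Z}_1,\ldots,\mathcal{Z}_{n-g}$ by free $B$-modules. The strategy is to produce explicit free $R$-resolutions of the relevant Koszul cycles $Z_i$, tensor with $B$ (using flatness of $B$ over $R$) to obtain $B$-free resolutions, and merge everything into a single free $B$-resolution of $\Sym(I)$ via an iterated mapping cone.

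For part (i), with $n=g+1$, the only non-free module is $\mathcal{Z}_1=Z_1\otimes_R B(-1)$. Since $Z_1=\mathrm{image}(F_2\to F_1)$, the tail $\cdots\to F_3\to F_2\to Z_1\to 0$ of $F_\bullet$ is a free $R$-resolution of $Z_1$, and tensoring with $B(-1)$ yields a free $B$-resolution of $\mathcal{Z}_1$. I lift the differential $\mathfrak{T}_0\to\mathcal{Z}_1$ through the surjection $F_2\otimes_R B(-1)\twoheadrightarrow\mathcal{Z}_1$ by projectivity, extend it to a chain map $\mathfrak{T}_\bullet\to F_{\bullet+2}\otimes_R B(-1)$ by the comparison theorem, and take the mapping cone. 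Direct inspection of the cone gives $\mathfrak{F}_0=B$, $\mathfrak{F}_1=F_2\otimes_R B(-1)$, $\mathfrak{F}_i=\mathfrak{T}_{i-2}\oplus(F_{i+1}\otimes_R B(-1))$ for $i\ge 2$, and acyclicity follows from the standard fact that the mapping cone of a quasi-isomorphism of resolutions is acyclic.

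For part (ii), $\mathcal{Z}_1$ is handled exactly as in (i); the new ingredient is a free $R$-resolution of $Z_2$. Perfectness of $I$ combined with $n-g=2$ supplies two structural facts. First, depth sensitivity of the Koszul complex forces $H_i(K_\bullet)=0$ for $i\ge 3$, so the truncation $0\to K_n\to\cdots\to K_3\to B_2\to 0$ is a free resolution of the boundary module $B_2=\mathrm{image}(d_2)$. Second, by the self-duality of the Koszul complex of a perfect ideal, $H_{n-g}(K_\bullet)=H_2(K_\bullet)\cong\omega_{R/I}$, whose free resolution is the dualized complex $0\to F_0^*\to\cdots\to F_g^*\to\omega_{R/I}\to 0$. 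The short exact sequence $0\to B_2\to Z_2\to H_2(K_\bullet)\to 0$, processed through the horseshoe lemma, assembles these into a free $R$-resolution of $Z_2$ whose $j$-th term is built from $K_{j+3}$ and $F_{g-j}^*$. Iterating the mapping cone, first to resolve $\mathcal{Z}_1$ via $F_\bullet$ and then $\mathcal{Z}_2$ via this Koszul-dual hybrid, produces the three summands $F_{i+1}$, $K_{i+1}$, and $F_{g-i+2}^*$ appearing in the formula for $\mathfrak{F}_i$.

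The principal obstacle is the bookkeeping in part (ii): one must verify existence of the chain lifts at every stage, in particular the compatibility between the dualized resolution of $\omega_{R/I}$ and the Koszul piece arising from $B_2$, and take care of the internal grading shifts so that the naively expected $B(-2)$ twists on the pieces resolving $\mathcal{Z}_2$ reconcile with the uniform $B(-1)$ appearing in the statement. This reconciliation reflects how the Koszul differentials in $\mathfrak{L}_\bullet$ weave $R$-linear and $B$-linear contributions, and demands meticulous verification at each homological degree that the assembled differentials close up into an honest complex recovering $\Sym(I)$ in degree zero.
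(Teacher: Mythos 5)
Your proposal follows the paper's proof essentially verbatim: resolve $\mathcal{Z}_1$ by the truncation of $F_\bullet\otimes_R B(-1)$, resolve $\mathcal{Z}_2$ via the horseshoe lemma applied to $0\to \text{B}_2(K_\bullet)\to \text{Z}_2(K_\bullet)\to \text{H}_2(K_\bullet)\to 0$ using the truncated Koszul complex and the dual of $F_\bullet$, and assemble everything by iterated mapping cones over the short exact sequences extracted from $\mathfrak{L}_\bullet$. One small correction: the cones you form are not cones of quasi-isomorphisms (they are cones of chain maps lifting the injections $\Coker(\Psi)\hookrightarrow\mathcal{Z}_1$, etc.), so the fact you need is that such a cone resolves the cokernel of the injection, not that it is acyclic.
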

\begin{proof}
	Let $\LL$ be the kernel of the natural surjection $B \surjects \Sym(I)$. 
	
	(i)
	From \autoref{prop_modified_Z_complex} we have the acyclic complex of graded
	$B$-modules
	$$
	\mathfrak{L}_\bullet: \quad 0 \rightarrow \mathfrak{T}_{g-2} \rightarrow \cdots \rightarrow \mathfrak{T}_{1} \xrightarrow{\Psi} \mathfrak{T}_{0} \rightarrow \mathcal{Z}_{1}  \rightarrow B \rightarrow 0
	$$
	that resolves $\Sym(I)$.
	The map $\Coker(\Psi) \rightarrow \mathcal{Z}_1$ induces a morphism of complexes 
	of graded $B$-modules $u_{\bullet} : \mathfrak{T}_\bullet \rightarrow G_\bullet$, where
	$$
	G_\bullet : \quad \cdots \rightarrow F_3 \otimes_{R} B(-1) \rightarrow F_2 \otimes_{R} B(-1) \rightarrow 0
	$$ 
	is the homogeneous resolution of $\mathcal{Z}_1 = Z_1 \otimes_R B(-1)$ obtained by truncating $F_\bullet \otimes_{R} B(-1)$.
	Since we have a short exact sequence $0 \rightarrow \Coker(\Psi) \rightarrow \mathcal{Z}_1 \rightarrow \LL \rightarrow 0$, the mapping cone $C(u_{\bullet})$ yields a homogeneous free $B$-resolution of $\LL$.
	So, the proof of this part is complete.
	
	(ii) The complex of \autoref{prop_modified_Z_complex} is now given by
	$$
	\mathfrak{L}_\bullet: \quad 0 \rightarrow \mathfrak{T}_{g-2} \rightarrow \cdots \rightarrow \mathfrak{T}_{1} \xrightarrow{\Psi} \mathfrak{T}_{0} \rightarrow \mathcal{Z}_{2} \xrightarrow{\Phi} \mathcal{Z}_{1} \rightarrow B \rightarrow 0.
	$$
	Notice that there is a short exact sequence $0 \rightarrow \text{B}_2(K_\bullet) \rightarrow \text{Z}_2(K_\bullet) \rightarrow \text{H}_2(K_\bullet) \rightarrow 0$, the truncated Koszul complex $0 \rightarrow K_n \rightarrow \cdots \rightarrow K_3 \rightarrow 0$ is a free $R$-resolution of $\text{B}_2(K_\bullet)$, and $\Hom_R(F_\bullet,R)[-g]$ gives a free $R$-resolution of $\text{H}_2(K_\bullet) \cong \omega_{R/I}$ as $R/I$ is a perfect $R$-module.
	By the Horseshoe lemma, a homogeneous free $B$-resolution of $\mathcal{Z}_2 = Z_2 \otimes_{R} B(-1)$ is given by a complex $P_\bullet$ with $P_i = \big(K_{i+3} \oplus F_{g-i}^*\big) \otimes_{R} B(-1)$.
	As in part (i), a mapping cone construction along the short exact sequence $0 \rightarrow \Coker(\Psi) \rightarrow \mathcal{Z}_2 \rightarrow \IM(\Phi) \rightarrow 0$ yields a homogeneous free $B$-resolution $Q_\bullet$ of $\IM(\Phi)$ with $Q_i = P_i \oplus \mathfrak{T}_{i-1}$.
	Recall that the complex $G_\bullet$ of part (i) is a homogeneous free 
	B-resolution of $\mathcal{Z}_1 = Z_1 \otimes_R B(-1)$. 
	Therefore, another mapping cone construction along the short exact sequence $0 \rightarrow \IM(\Phi) \rightarrow \mathcal{Z}_1 \rightarrow \LL \rightarrow 0$ gives a homogeneous free $B$-resolution $L_\bullet$ of $\LL$ with $L_0 = F_2 \otimes_{R} B(-1)$ and
	$$
	L_i \,=\, Q_{i-1} \oplus \big(F_{i+2} \otimes_{R}  B(-1)\big)  \,=\, \mathfrak{T}_{i-2} \,\oplus\, \big(\big(F_{i+2} \oplus K_{i+2} \oplus F_{g-i+1}^*\big) \otimes_R B(-1)\big)
	$$
	for all $i \ge 1$.
	The resolution $\mathfrak{F}_\bullet$ is now given by setting $\mathfrak{F}_0 = B$ and $\mathfrak{F}_{i} = L_{i-1}$ for all $i \ge 1$.
\end{proof}

\begin{remark} The proof above also gives a description of the differentials in the
	resolutions $\mathfrak{F}_\bullet$ of \autoref{thm_res_Sym}.
\end{remark}

\section{A general framework of dualities to study blowup algebras}
\label{sect_dualities_Sym}

In this section, we apply our generalization of Jouanolou duality to study the defining equations of several interesting classes of Rees algebras. 
Determining the defining equations of Rees algebras is a problem of utmost importance with applications in Algebraic Geometry, Commutative Algebra and applied areas like Geometric Modeling (see \cites{KPU3,KPU4,KPU5,Buse,KM,CBD14,V_REES_EQ,COX_MOV_CURV,COX_HOFFMAN_WANG,HONG_SV,KPU_RAT_SC,CBD13,Morey, MU}).

\begin{setup}
	\label{setup_def_eq_Rees}
	Let $\kk$ be a field, $R = \kk[x_1,\ldots,x_d]$ be a standard graded polynomial ring and $\mm = (x_1,\ldots,x_d) \subset R$ be the graded irrelevant ideal. 
	Let $I\subset R$ be an ideal minimally generated by $n$ forms 
	$f_1,\ldots,f_n$  of the same degree $D \ge 1$.
	%, and set $I = (f_1,\ldots,f_n) \subset R$ to be the ideal generated by these forms.  
	Let $T = \kk[y_1,\ldots,y_n]$ be a standard graded polynomial ring and $\nnn = (y_1,\ldots,y_n) \subset T$ be the graded irrelevant ideal.
	Let $B$ be the standard bigraded polynomial ring $B = R \otimes_\kk T$ (i.e., $\bideg(x_i) = (1, 0)$ and $\bideg(y_i) = (0, 1)$). 
	Since we are primarily interested in the $\xx$-grading, for any bigraded $B$-module $M$, we denote by $M_i$ the graded $T$-module $M_i := \bigoplus_{j \in \ZZ} [M]_{(i,j)}$.
\end{setup}

As customary, we consider the $\kk$-algebra homomorphism 
$$
\Phi : B \twoheadrightarrow \Rees(I) = R[It] = \bigoplus_{j = 0}^\infty I^jt^j \subset R[t], \quad x_i \mapsto x_i \text{ and } y_i \mapsto f_it.
$$
Our goal is to determine the defining ideal $\JJ := \Ker(\Phi) \subset B$ of the Rees algebra $\Rees(I)$.
Let $\GG : \PP_\kk^{d-1} \dashrightarrow \PP_\kk^{n-1}$ be the rational map 
$$
(x_1:\cdots:x_d) \;\mapsto\; (f_1(x_1,\ldots,x_d) : \cdots : f_n(x_1,\ldots,x_d))
$$
determined by the forms $f_1,\ldots,f_n$ generating $I$.
The Rees algebra $\Rees(I)$ provides the bihomogeneous coordinate ring of the closure of the graph of $\GG$, and this reinforces the interest in finding the defining equations of $\Rees(I)$.

Typically, a good way to study the Rees algebra is to approximate it by the symmetric algebra, which is much better understood, at least as far as 
defining equations are concerned. If
$$F_1 \xrightarrow{\varphi} F_0 \rightarrow I \rightarrow 0$$ is a homogeneous minimal free presentation of $I,$ where the $i$-th basis element of $F_0$ maps to $f_i,$ then the defining ideal of $\Sym(I)$ 
is the kernel $\LL$ of the induced map $B=\Sym(F_0) \twoheadrightarrow \Sym(I)$. This
ideal can be described explicitly as
$$
\LL=\, (g_1,\ldots,g_u) =  I_1\left(\left[y_1,\ldots,y_n\right]  \cdot \varphi\right)
$$
%that is, $\Sym(F_0)=B,$ $\Sym(I) \cong B/\LL,$ and 
with $u = \rank(F_1)=\mu(\syz(I)).$ 
%is the minimal number of generators of the syzygy module of $I$.
There is a natural exact sequence of bigraded $B$-modules 
$$
0 \rightarrow \AAA \rightarrow \Sym(I) \rightarrow \Rees(I) \rightarrow 0,
$$
where $\AAA = \JJ/\LL$ coincides with the $R$-torsion of the symmetric algebra.
Therefore, one can study $\AAA$ to determine (or to obtain information about) the defining ideal $\JJ$ of the Rees algebra $\Rees(I)$.
Our main contribution in this direction is \autoref{thm_Jou_dual_Sym} below. To prove it,
we need the following adaptation of \autoref{thm_Sym_weak_Gor}(ii) to our bigraded setting:

\begin{proposition}
	\label{thm_Sym_weak_Gor_bigraded}
	Assume \autoref{setup_def_eq_Rees}. 
	Suppose that the ideal $I \subset R$ has height $g = \HT(I) \ge 2$ and satisfies the conditions $F_0$ and $\SD_1$. Then $\Sym(I)$ is $d$-weakly Gorenstein with respect to 
	$\mm\Sym(I),$ and the canonical 
	module has a weak generator that is bihomogeneous of bidegree $\left(d-(g-1)D,g-1\right).$
\end{proposition}

\begin{proof}
	We first prove the claim for the case when the field $\kk$ is infinite. 
	\autoref{thm_Sym_weak_Gor} implies that $S$ is $d$-weakly Gorenstein 
	with respect to $\m S,$ and a weak generator $u_1$ of the canonical module
	is identified in the proof of the same theorem. 
	To show that $u_1$ is bihomogeneous with 
	$$\bideg(u_1) = \left(d-(g-1)D,g-1\right),$$
	we note that the complex
	$$\mathfrak{L}_\bullet: \, 0 \rightarrow \mathfrak{T}_{g-2} \rightarrow \cdots \rightarrow \mathfrak{T}_{1} \rightarrow \mathfrak{T}_{0} \rightarrow \mathcal{Z}_{n-g} \rightarrow \cdots \rightarrow \mathcal{Z}_0 \rightarrow 0$$ of \autoref{prop_modified_Z_complex} can be made bihomogeneous. Thus we have
	$$
	\mathfrak{T}_{g-2} = \bigoplus_{j=1}^{g-1}B\left(-(g-j)D,-n+g-j\right).
	$$
	On the other hand, the surjection
	$$
	\Hom_B\left(\mathfrak{T}_{g-2},\omega_B\right) \otimes_B S \; = \; \bigoplus_{j=1}^{g-1}S\left((g-j)D-d,-g+j\right) \; \twoheadrightarrow \; \omega_S
	$$
	introduced in the proof of \autoref{thm_Sym_weak_Gor} is already bihomogeneous.
	As $u_1$ is the image of the first standard basis element of the module on the left, 
	it follows that $u_1$ is indeed bihomogeneous with $\bideg(u_1) = \left(d-(g-1)D,g-1\right).$ This completes the proof of the claim when $\kk$ is infinite.
	
	Next we treat the case of an arbitrary ground field $\kk.$ We proceed as in the 
	proof of \autoref{prop_det_2_by_2}. Let $\kk'$ be an infinite field  containing $\kk.$
	Write $R' := R \otimes_\kk \kk'$ and $S' := S \otimes_\kk \kk',$ and let 
	$u_1'$ be the weak generator of $\omega_{S'} \cong \omega_S \otimes_{\kk}\kk'$ considered 
	in the previous paragraph. As in the proof of \autoref{prop_det_2_by_2}, it suffices to
	show that, up to multiplication by a unit in $\kk',$ the element $u_1'$ is
	extended from a bihomogeneous element of $\omega _S,$ and this in turn follows once
	we have proved that the bigraded Hilbert function of $\omega_{S'}$ has value $1$
	when evaluated at $\bideg(u_1')$. \autoref{thm_Sym_weak_Gor} gives an
	isomorphism $\omega_{S'} \cong (f_1,y_1)^{g-2}S'.$ The proof of the same theorem
	shows that, up to sign, this isomorphism maps $u_1$ to the image 
	$\overline{y_1^{g-2}}$ of $y_1^{g-2}$ in $S'.$ That proof and the argument in
	the previous paragraph also show that the isomorphism $\omega_{S'} \cong (f_1,y_1)^{g-2}S'$
	is bihomogeneous, though not necessarily of bidegree $(0,0).$ So it 
	suffices to prove that the bigraded Hilbert function of $(f_1,y_1)^{g-2}S'$ has value $1$
	when evaluated at $\bideg(\overline{y_1^{g-2}})=(0,g-2),$ which is obvious.
\end{proof}

\vspace{.001cm}

\begin{theorem}
	\label{thm_Jou_dual_Sym}
	Assume \autoref{setup_def_eq_Rees}.
	Suppose that $I = (f_1,\ldots,f_n) \subset R$ has height $g = \HT(I) \ge 2$, that $I$ satisfies the conditions $G_d$ and $\SD_1$, and that $n = d+1$.
	Let $\delta := (g-1)D-d$ and $\beta := d-g+2$.
	Then the following statements hold$\, :$ 
	\begin{enumerate}[\rm (i)]
		\item $\mathcal{A} = \HL^0(\Sym(I))$.\smallskip
		
		\item For all $0 \le i \le d-1$, there is an isomorphism of bigraded $B$-modules 
		$$
		\HL^i(\Sym(I)) \, \cong \, {}^*\Ext_T^i\left(\Sym(I), T\right)\left(-\delta, -\beta\right).
		$$
		In particular, $\AAA = \HL^0(\Sym(I)) \cong {}^*\Hom_T\left(\Sym(I), T\right)\left(-\delta, -\beta\right) .$
		\smallskip
		
		\item 
		For all $i < 0$ and $i > \delta$, we have $\AAA_i = 0$. 
		There is an isomorphism $\AAA_\delta \cong T(-\beta)$ of graded $T$-modules.
		For all $0 \le i \le \delta$, we have the equality of $\Sym(I)$-ideals
		$$
		\AAA_{\ge i} \; = \; 0:_{\Sym(I)} \mm^{\delta+1-i}.
		$$
		
		\item If $g\ge 3$ and $\, I$ satisfies $\SD_2,$ then $\AAA$ is minimally generated in $ \xx$-degrees at most  $ (g-2)D-d+1\, .$	
		
		\vspace{.25cm}	
		\item 
		Let $0 \le i \le \delta$. 
		The natural multiplication map 
		$
		\mu : \AAA_i \; \otimes_T \; \Sym(I)_{\delta-i} \;\rightarrow\;  \AAA_\delta, \; a \otimes b \mapsto a\cdot b
		$
		is a perfect pairing that induces the abstract isomorphism of graded $T$-modules
		$$
		\nu : \AAA_i \;\xrightarrow{\;\cong\;}\; \Hom_T\left(\Sym(I)_{\delta-i}, \AAA_{\delta}\right)
		$$
		seen in part {\rm(ii)}.
		\smallskip
		
		\item For all $2 \le i \le d+1$, there is an isomorphism of bigraded $B$-modules 
		$$
		\HH_\nnn^i(\Sym(I)) \, \cong \, {}^*\Ext_R^{i-1}(\Sym(I), R)\left(-(g-1)D, g-1\right).
		$$
	\end{enumerate}
\end{theorem}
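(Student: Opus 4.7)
The structural engine is Theorem~\ref{thm_Sym_weak_Gor}: once one observes that $n=d+1$ together with $G_d$ gives $F_0$, the symmetric algebra $S:=\Sym(I)$ is Cohen-Macaulay of dimension $d+1$, is perfect over $B=T[x_1,\ldots,x_d]$ of codimension $c=d$ (from $\dim B-\dim S=(2d+1)-(d+1)$), is $d$-weakly Gorenstein with respect to $\mm S$ via the element $y=\overline{y_1^{g-2}}$, and has canonical module $\omega_S\cong (f_1,y_1)^{g-2}S$ up to a bidegree shift. For part~(i), the conditions $G_d$ and $\SD_1$ force $I$ to be of linear type on $\Spec(R)\setminus\{\mm\}$, so $\AAA$ is $\mm$-torsion and hence $\AAA\subseteq \HL^0(S)$; conversely, since $\Rees(I)\subseteq R[t]$ is $R$-torsion-free, $\HL^0(S)\subseteq\AAA$, whence equality.

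For part~(ii), I would apply Theorem~\ref{thm_duality_Jou_bigrad} taking $T=\kk[y_1,\ldots,y_n]$ (graded Gorenstein with $\omega_T\cong T(-n)$, so $a=-n=-(d+1)$). The one honest calculation is the bidegree of $y$: the bigraded refinement of Theorem~\ref{thm_Sym_weak_Gor} gives $\omega_S\cong (f_1,y_1)^{g-2}S\bigl((g-1)D-d,-1\bigr)$, so the element $\overline{y_1^{g-2}}$, of bidegree $(0,g-2)$ in $S$, corresponds to an element of bidegree $\bigl(d-(g-1)D,\,g-1\bigr)=(-\delta,g-1)$ in $\omega_S$. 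Plugging into Theorem~\ref{thm_duality_Jou_bigrad} yields the shift $\bideg(y)+(0,a)=(-\delta,-\beta)$, which is exactly~(ii). Part~(iii) is then a mechanical translation through \autoref{obs1} and \autoref{LemmaB}: concentration in non-positive $x$-degrees of ${}^*\Hom_T(S,T)$ forces $\AAA_i=0$ for $i>\delta$; the identification of its degree-zero component with $T$ gives $\AAA_\delta\cong T(-\beta)$; and the annihilator formula of \autoref{LemmaB} translates, under the shift $(-\delta,-\beta)$, into the filtration identity $\AAA_{\ge i}=0:_S\mm^{\delta+1-i}$.

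Part~(v) is \autoref{Gen_perfectpairing} applied in the bigraded version of \autoref{setup_duality_PP2}, whose hypotheses (standard bigraded, perfect of codimension $d=\dim T$, $1$-weakly Gorenstein over $\mm S$) have all been verified above. For part~(iv), the plan is to exploit that the canonical module $(f_1,y_1)^{g-2}S$ has explicit minimal $S$-generators $f_1^a y_1^{g-2-a}$ for $0\le a\le g-2$, which after the bidegree shift $\bigl((g-1)D-d,-1\bigr)$ sit in $x$-degrees of $\omega_S$ lying in the range $[-\delta,\,d-D]$. Combining this with \autoref{lem_weakly_Gor}, which identifies $\HL^0(\omega_S)\cong \AAA(-\bideg y)$, and with the perfect pairing of part~(v), a degree comparison translates the $x$-degrees of generators of $\omega_S$ into the asserted bound $(g-2)D-d+1$ on the $x$-degrees of minimal $B$-generators of $\AAA$.

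Finally, for part~(vi) the direct approach fails because $S$ is only $0$-weakly Gorenstein with respect to $\nnn S$ (\autoref{rem_no_weak_Gor_nnn}), so Theorem~\ref{thm_duality_Jou_bigrad} cannot be applied with the roles of $R$ and $T$ swapped. Instead, the plan is to combine the bigraded isomorphism from part~(ii) with the Herzog--Rahimi bigraded duality of~\cite{HeRa}, which for a bigraded module over $B$ relates $\HH_\nnn^i$ to a graded $\Ext$ over $R$ up to an explicit bidegree twist. \emph{The main obstacle of the proof is bidegree bookkeeping}: one must track the bigraded shift of $\omega_S$ carefully enough for the application in~(ii) to land precisely at $(-\delta,-\beta)$, and then reconcile the Herzog--Rahimi conventions with ours so that the composite in~(vi) produces exactly the twist $(-(g-1)D,\,g-1)$.
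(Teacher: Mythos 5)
Your plan for parts (i), (ii), (iii), (v), and (vi) is essentially the paper's own argument. In particular, the observation that $n=d+1$ together with $G_d$ gives $F_0$ (so that \autoref{thm_Sym_weak_Gor} applies), the bidegree bookkeeping in (ii) (namely $\bideg(y)=(-\delta,g-1)$ and $a=-(d+1)$, producing the shift $(-\delta,-\beta)$), and the derivation of (iii) from \autoref{obs1} and \autoref{LemmaB} all match the paper. For (vi) your sketch is vaguer than what is actually done: the paper applies \autoref{thm_Her_Rah_duality} twice, sandwiching the isomorphism $\HH_\mm^i(\omega_S)\cong\HH_\mm^i(S)(\bideg(y))$ from \autoref{lem_weakly_Gor}, to get $\HH_\nnn^{d+1-i}(S)\cong\HH_\nnn^{d+1-i}(\omega_S)(\bideg(y))$, and then applies \autoref{thm_duality_can_bigrad} with the roles of $R$ and $T$ exchanged \emph{to $\omega_S$} (this is legitimate; only the weakly Gorenstein step fails over $\nnn S$). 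Your route via part (ii) would still require relating $\big({}^*\Ext_T^{i}(\Sym(I),T)\big)^{\vee_B}$ to ${}^*\Ext_R^{\,*}(\Sym(I),R)$, which is an extra duality you do not supply; but the ingredients you name are the right ones.

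The genuine gap is part (iv). You propose to deduce the bound $(g-2)D-d+1$ from the minimal generators $f_1^{a}y_1^{g-2-a}$ of $\omega_S\cong(f_1,y_1)^{g-2}\Sym(I)$ together with \autoref{lem_weakly_Gor} and the perfect pairing. This cannot be completed as stated. First, $\HL^0(\omega_S)$ is a \emph{submodule} of $\omega_S$, and generator degrees of a module impose no upper bound on generator degrees of a submodule (compare $x^NR\subset R$). Second, the perfect pairing of (v) only identifies graded components $\AAA_i\cong\Hom_T(\Sym(I)_{\delta-i},\AAA_\delta)$; the assertion that $\AAA$ has no minimal generator in $\xx$-degree $i$ is the surjectivity of $\Hom_T(\Sym(I)_{\delta-i+1},T)^{\oplus d}\to\Hom_T(\Sym(I)_{\delta-i},T)$, and dualizing the (injective, for $i\ge 1$) map $\Sym(I)_{\delta-i}\to\Sym(I)_{\delta-i+1}^{\oplus d}$ only shows that the relevant cokernel embeds into $\Ext_T^1$ of the cokernel of that map. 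So one needs depth/resolution information about the graded strands of $\Sym(I)$, not degree information about $\omega_S$. The paper supplies exactly this by applying \cite[Theorem 4.3]{KPU5} to the $(*,k)$-strands of the acyclic complex $\mathfrak{L}_\bullet$ of \autoref{prop_modified_Z_complex}: the $(d-1)$-st module of each strand is a strand of $\mathfrak{T}_{g-3}$, generated in degree at most $(g-2)D$, and the cited theorem converts this into the bound $(g-2)D-d+1$. Some input of this resolution-theoretic kind is unavoidable here, and your sketch for (iv) does not contain it.
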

\begin{proof}
	To simplify notation, set $S := \Sym(I)$.
	
	\vspace{.2cm}	
	
	(i) From the assumed conditions it follows that $I$ is of linear type on the punctured spectrum of $R$ (see \cite[Theorem 5.1, Corollary 4.8]{HSV}). This implies that $\AAA = \HL^0(S)$.
	%	Alternatively, see \cite[\S 3.7]{KPU4}.
	
	\vspace{.2cm}			
	
	(ii) One uses \autoref{thm_Sym_weak_Gor_bigraded} and applies \autoref{thm_duality_Jou_bigrad} with $c=n-1=d$ and $a=-n=-d-1.$

	\vspace{.2cm}			
	
	(iii)
	The isomorphism $\AAA \cong {}^*\Hom_T\left(S, T\right)(-\delta,-\beta)$ implies that $\AAA_i=0$ when $i < 0$ and $i > \delta$, and it gives the isomorphism $\AAA_\delta \cong T(-\beta)$ of graded $T$-modules.
	The same isomorphism for $\AAA$
	%\cong {}^*\Hom_T\left(S, T\right)(-\delta,-\beta)$ 
	together with \autoref{LemmaB} shows the equality
	%We readily get the inclusion 
	$
	\AAA_{\ge i} \,=\, 0:_S\mm^{\delta+1-i}.
	$
	%Denote by $\varpi : \AAA \xrightarrow{\cong}  {}^*\Hom_T(S, T)(-\delta,-\beta)$ the isomorphism that we got from part (ii).
	%Let $0 \neq a \in \AAA_i$ be a non-zero element, and $\eta = \varpi(a) \in \Hom_T(S_{\delta-i}, \AAA_\delta)$  be the corresponding non-zero $T$-linear map. For any $b \in \mm^{\delta-i}$, we have that $b \cdot \eta \in \Hom_T(S_0, \AAA_\delta)$ is the map given by $(b \cdot \eta)(s) := \eta(b \cdot s)$ for all $s \in S_0$. Since $S_{\delta-i} = \mm^{\delta-i} \cdot S_0$, there exists  $b \in \mm^{\delta-i}$  such that $b \cdot \eta \neq 0$. Taking that same $b \in \mm^{\delta-i}$ and going backwards via $\varpi$ give us that $b \cdot a \neq 0 \in \AAA_\delta$. This implies that any $a \in 0:_S \mm^{\delta+1-i}$ has $\xx$-degree at least $i$. Therefore, we also have the other containment $\AAA_{\ge i} \supseteq 0:_S\mm^{\delta+1-i}.$
	
	\vspace{.2cm}			
	
	(iv) Since the ideal $I \subset R$ satisfies $\SD_2$, the graded strands  of bidegree $(*,k)$ of the complex $\mathfrak{L}_{\bullet}$ of \autoref{prop_modified_Z_complex} are acyclic complexes of $R$-modules that satisfy the assumptions of \cite[Theorem 4.3]{KPU5} with $i:=d$ and $t:=1$.
	As $g\geq 3,$ the $(d-1)$-st module in each of these complexes is 
	$$
	\left[\mathfrak{T}_{g-3}\right]_{(*,k)} = \left[\bigoplus_{j=1}^{g-2}B^n\left(-(g-1-j)D,-n+g-j\right)\right]_{(*,k)},
	$$
	so it follows that these $R$-modules are generated in degrees at most $(g-2)D$.
	Thus according to \cite[Theorem 4.3]{KPU5}, the $\xx$-degrees of the minimal generators of $\AAA$ are at most $(g-2)D-d+1$.
	
	\vspace{.2cm}			
	
	(v) This follows from the analogue of \autoref{Gen_perfectpairing}(iii) in the bigraded setting, which can be proved using part (ii).   
	%It is clear that the natural multiplication map $\AAA_i \otimes_T S_{\delta-i} \rightarrow \AAA_\delta$ induces a homomorphism 
	%$$
	%\nu : \AAA_i \rightarrow \Hom_T(S_{\delta-i}, \AAA_\delta). 
	%	$$
	%	From the bigraded $B$-isomorphism $\AAA \cong {}^*\Hom_T\left(S, T\right)(-\delta,-\beta)$ we already know that 
	%	$$
	%	\dim_\kk\left(\left[\AAA_i\right]_j\right) \, = \, \dim_\kk\left(\left[\Hom_T(S_{\delta-i}, \AAA_\delta)\right]_j\right)
	%	$$
	%	for all $j \in \ZZ$.
	%	Hence, to show that $\nu : \AAA_i \rightarrow \Hom_T(S_{\delta-i}, \AAA_\delta)$ is an isomorphism, it suffices to prove that $\nu$ is injective.
	%	However, part (iii) implies that for any $0 \neq a \in \AAA_i$, there is some $b \in \mm^{\delta-i} \subset R_{\delta-i} \subset S_{\delta-i}$ such that $b \cdot a \neq 0 \in \AAA_\delta$, and so it follows that $\nu(a) \in \Hom_T(S_{\delta-i}, \AAA_\delta)$ is not the zero map.
	%	So, $\nu$ is an isomorphism and the natural multiplication map $\mu : \AAA_i \otimes_T S_{\delta-i} \rightarrow \AAA_\delta$ is a perfect pairing.
	
	\vspace{.2cm}			
	
	(vi) Recall that $S$ is a Cohen-Macaulay ring of dimension $d+1$ (see \autoref{lem_ht_LL_with_F0} and \autoref{lem_Sym_CM}).
	We apply a duality result of Herzog and Rahimi \cite[Theorem, Corollary 1.6]{HeRa} that is being reproduced in \autoref{thm_Her_Rah_duality} below, where we also provide a short direct proof for the reader's convenience. For $2 \leq i \leq d+1$, we have the following isomorphisms of bigraded $B$-modules, where $-^{\vee_B}$ denotes bigraded $\kk$-duals,
	\begin{align*}
		\HH_\nnn^{i}(S) &\;\cong \; \big(\HH_\mm^{d+1-i}(\omega_S)\big)^{\vee_B} &\text{by \autoref{thm_Her_Rah_duality}} \\
		&\;\cong \; \big(\HH_\mm^{d+1-i}(S)\big)^{\vee_B}\left(d-(g-1)D,g-1\right) & \text{by 
			%\autoref{thm_duality_can_bigrad} and part (ii)
			\autoref{lem_weakly_Gor} and \autoref{thm_Sym_weak_Gor_bigraded}} \\
		&\;\cong \; \HH_\nnn^{i}(\omega_S)\left(d-(g-1)D,g-1\right) & \text{by \autoref{thm_Her_Rah_duality}}.
	\end{align*}
	By exchanging the roles of $R$ and $T$ in \autoref{thm_duality_can_bigrad}, we obtain the isomorphism 
	$$
	\HH_\nnn^i(\omega_S) \; \cong \; {}^*\Ext_R^{i-1}(S, R)(-d,0)
	$$
	for every $i \ge 0$.
	Combining the above isomorphisms we conclude that 
	$$
	\HH_\nnn^i(S) \, \cong \, {}^*\Ext_R^{i-1}(S, R)\left(-(g-1)D, g-1\right),
	$$  
	as required.
\end{proof}

If $B$ is a standard bigraded polynomial ring over a field $\kk$ as in \autoref{setup_def_eq_Rees} and $M$ is a bigraded $B$-module, then   
by

$$
(M)^{\vee_B} := \bigoplus_{i,j \in \ZZ} \Hom_\kk\big(\left[M\right]_{(-i,-j)}, \kk\big)
$$ 
we denote bigraded $\kk$-dual of $M$.
The next result provides a short proof of Herzog-Rahimi duality.
This duality was used already in the proof of the theorem above.

\begin{theorem}[{Herzog-Rahimi duality \cite[Theorem, Corollary 1.6]{HeRa}}]
	\label{thm_Her_Rah_duality}
	Let $B,$ $\mm,$ $\nnn$ be as in \autoref{setup_def_eq_Rees} and let $M$ be a finitely generated bigraded $B$-module.
	Then there are two converging spectral sequences of bigraded $\,B$-modules
	$$
	\HH_\nnn^p\left(\Ext_B^q(M, \omega_B)\right) \;\, \Longrightarrow\;\, \left(\HL^{d+n-p-q}(M)\right)^{\vee_B}
	$$
	and 
	$$
	\HH_\mm^p\left(\Ext_B^q(M, \omega_B)\right) \;\, \Longrightarrow\;\, \left(\HH_\nnn^{d+n-p-q}(M)\right)^{\vee_B}.
	$$
	In particular, if $M$ is Cohen-Macaulay, then there are two isomorphisms of bigraded $B$-modules 
	$$
	\HH_\nnn^i(\omega_M) \; \cong \; \big(\HH_\mm^{\dim(M)-i}(M)\big)^{\vee_B} \quad \text{ and } \quad \HH_\mm^i(\omega_M) \; \cong \; \big(\HH_\nnn^{\dim(M)-i}(M)\big)^{\vee_B}
	$$
	for every $i \in \ZZ$.
\end{theorem}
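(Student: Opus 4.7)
The plan is to derive both spectral sequences from the two canonical filtrations of a single double complex, using graded local duality on $T$ (respectively $R$) to collapse one of them. Choose a graded free resolution $F_\bullet \to M$ by finitely generated free bigraded $B$-modules, let $\mathcal{C}^\bullet_\nnn$ be the \v{C}ech complex of $B$ on $y_1,\ldots,y_n$, and form the double complex
$$
D^{p,q} \;:=\; \mathcal{C}^{p}_\nnn \otimes_B \Hom_B(F_q,\omega_B).
$$

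Running one filtration (vertical cohomology first): since each $\mathcal{C}^p_\nnn$ is $B$-flat, the vertical $E_1$-page is $\mathcal{C}^p_\nnn \otimes_B \Ext_B^q(M,\omega_B)$, and subsequent horizontal cohomology yields exactly
$$
E_2^{p,q} \;=\; \HH_\nnn^p\bigl(\Ext_B^q(M,\omega_B)\bigr) \;\Longrightarrow\; \HH^{p+q}(\mathrm{Tot}(D)),
$$
matching the $E_2$-page of the first statement. Running the other filtration (horizontal cohomology first): each $\Hom_B(F_q,\omega_B)$ is a free bigraded $B$-module shifted by $\bideg(\omega_B)=(-d,-n)$; since $\HH_\nnn^p(B)\cong R\otimes_\kk\HH_\nnn^p(T)$ vanishes for $p\neq n$, only the row $p=n$ survives, and graded local duality on $T$ identifies $\HH_\nnn^n(\omega_B)\cong \omega_R\otimes_\kk T^{\vee_T}$. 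This collapsed spectral sequence therefore yields
$$
\HH^{n+q}(\mathrm{Tot}(D)) \;\cong\; \Ext_B^q\bigl(M,\,\omega_R\otimes_\kk T^{\vee_T}\bigr).
$$

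The main obstacle is identifying this abutment with $(\HL^{d-q}(M))^{\vee_B}$. I would verify this by comparing bigraded pieces: apply graded local duality on $R$ to convert $\Ext_R^{d-q}(\,\cdot\,,\omega_R)$ into $\HL^{q}(\,\cdot\,)^{\vee_R}$ on each $T$-degree strand, then recognize that $(-)^{\vee_R}$ applied strand-by-strand together with the $T$-Matlis dualization coming from the factor $T^{\vee_T}$ recovers the full bigraded Matlis dual $(-)^{\vee_B}$, since every bigraded piece of $M$ is a finite-dimensional $\kk$-vector space. As a sanity check, for $M=B$ one has $\Ext_B^0(B,\omega_R\otimes_\kk T^{\vee_T})=\omega_R\otimes_\kk T^{\vee_T}$ while $(\HL^d(B))^{\vee_B}=(R^{\vee_R}(d)\otimes_\kk T)^{\vee_B}$, and a direct bigraded-piece computation confirms these agree. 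Tracking the shifts $\bideg(\omega_B)=(-d,-n)$ then gives $\HH^{n+q}(\mathrm{Tot}(D))\cong (\HL^{d+n-(n+q)}(M))^{\vee_B}$, and the first spectral sequence follows after re-indexing.

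The second spectral sequence is obtained by rerunning the entire argument with $\mathcal{C}^\bullet_\mm$ in place of $\mathcal{C}^\bullet_\nnn$: the surviving row becomes $p=d$, one uses $\HH_\mm^d(\omega_B)\cong R^{\vee_R}\otimes_\kk\omega_T$, and the symmetric adjunction-plus-local-duality identification yields the abutment $(\HH_\nnn^{d+n-p-q}(M))^{\vee_B}$. The Cohen-Macaulay consequence is then immediate: when $M$ is Cohen-Macaulay of dimension $\dim(M)$, $\Ext_B^q(M,\omega_B)$ is concentrated in the single degree $q=d+n-\dim(M)$ where it equals $\omega_M$, so both spectral sequences degenerate on the $E_2$-page and produce the stated isomorphisms.
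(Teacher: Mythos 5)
Your double complex $D^{p,q}$ is exactly the paper's $G^\bullet\otimes_T C_\nnn^\bullet$ (with $G^\bullet=\Hom_B(F_\bullet,\omega_B)$), your two filtrations yield the same two $E_2$-pages, and your identification of the surviving row $p=n$ with $\Ext_B^q(M,\omega_R\otimes_\kk T^{\vee_T})$ is correct. The gap is in the bridge from this abutment to $(\HL^{d-q}(M))^{\vee_B}$: your proposed route, graded local duality on $R$ applied on each $T$-degree strand, does not directly apply, because $\Ext_B^q(M,-)$ is computed from a free bigraded $B$-resolution of $M$ whose differentials involve the $y_i$ and therefore do not preserve $T$-degree, so $\Ext_B^q$ is not a direct sum of $\Ext_R$'s of the strands $M_{(*,j)}$.

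The fix uses precisely the observation from your $M=B$ sanity check: identify $\omega_R\otimes_\kk T^{\vee_T}\cong(\HL^d(B))^{\vee_B}$, then combine the bigraded adjunction $\Hom_B(F,(N)^{\vee_B})\cong(F\otimes_B N)^{\vee_B}$ for a finitely generated free $B$-module $F$, the exactness of $(\cdot)^{\vee_B}$, and the fact that the shifted \v{C}ech complex $C_\mm^\bullet[d]$ is a flat $B$-resolution of $\HL^d(B)$, to obtain
$$
\Ext_B^q\bigl(M,(\HL^d(B))^{\vee_B}\bigr)\;\cong\;\bigl(\Tor_q^B(M,\HL^d(B))\bigr)^{\vee_B}\;\cong\;\bigl(\HL^{d-q}(M)\bigr)^{\vee_B}.
$$
The paper packages the same ingredients differently: it first derives $\HL^i(M)\cong\HH_{d-i}(\HL^d(F_\bullet))$ from the second-quadrant double complex $F_\bullet\otimes_R C_\mm^\bullet$, applies $(\cdot)^{\vee_B}$, and invokes the functorial isomorphism $(\HL^d(B))^{\vee_B}\cong\HH_\nnn^n(\omega_B)$ to land directly on $\HH^{d-i}(\HH_\nnn^n(G^\bullet))$, never naming the abutment as an $\Ext_B$. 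Either route works; yours needs this bridge spelled out, and the strand-wise local duality phrasing should be replaced by the adjunction plus flat-resolution argument above.
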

\begin{proof}
	Let $F_\bullet : \cdots \rightarrow F_i \rightarrow \cdots \rightarrow F_1 \rightarrow F_0 \rightarrow 0$ be a minimal bigraded free $B$-resolution of $M$.
	As in the second proof of \autoref{thm_duality_canonical_mod}, via the spectral sequences coming from the second quadrant double complex $F_\bullet \otimes_R C_\mm^\bullet$ we obtain the isomorphisms 
	$
	\HL^i(M) \, \cong \, \HH_{d-i}\left(\HL^d(F_\bullet)\right).
	$
	Dualizing with the exact functor $(\bullet)^{\vee_B}$, we now get
	$$
	\left(\HL^i(M)\right)^{\vee_B} \; \cong \; \HH^{d-i}\left(\left(\HL^d(F_\bullet)\right)^{\vee_B}\right) \; \cong \; \HH^{d-i}\Big(\HH_\nnn^n\big(\Hom_B(F_\bullet, \omega_B)\big)\Big)
	$$
	where the last isomorphism follows from the functorial isomorphism $\left(\HL^d(B)\right)^{\vee_B} \cong \HH_\nnn^n\left(\Hom_B(B, \omega_B)\right)$.
	Let $G^\bullet := \Hom_B\left(F_\bullet, \omega_B\right)$ and consider the first quadrant double complex $G^\bullet \otimes_T C_\nnn^\bullet$; the corresponding spectral sequences are given by 
	$$
	{}^{\text{I}}E_2^{p,q}\; = \; \begin{cases}
		\HH^p\left(\HH_\nnn^n(G^\bullet)\right) \,  \cong\, \left(\HL^{d-p}(M)\right)^{\vee_B} & \text{ if } q = n \\
		0 & \text{ otherwise}
	\end{cases}
	$$
	and 
	$$
	{}^{\text{II}}E_2^{p,q}\; = \; \HH_\nnn^p\left(\HH^q(G^\bullet)\right) \; \cong \; \HH_\nnn^p\left(\Ext_B^q(M, \omega_B)\right).
	$$
	Therefore, we obtain the convergent spectral sequence 
	$$
	\HH_\nnn^p\left(\Ext_B^q(M, \omega_B)\right) \;\, \Longrightarrow\;\, \left(\HL^{d+n-p-q}(M)\right)^{\vee_B}.
	$$
	The other spectral sequence is obtained by a completely symmetric argument.
	The two claimed isomorphisms follow immediately from the spectral sequences.
\end{proof}

\begin{remark}
	The isomorphisms of \autoref{thm_Jou_dual_Sym} are probably best seen in the form of a diagram. 
	Assume \autoref{setup_def_eq_Rees} with all the conditions and notations of \autoref{thm_Jou_dual_Sym}.
	We have the following diagram of bihomogeneous $B$-isomorphisms, where a label on an arrow specifies the range of $i$ where the isomorphism is valid:
	\begin{equation*}		
		\begin{tikzpicture}[baseline=(current  bounding  box.center)]
			\matrix (m) [matrix of math nodes,row sep=4.5em,column sep=8em,minimum width=2em, text height=1.8ex, text depth=0.25ex]
			{
				\left({}^*\Ext_R^{d-i}\left(\Sym(I), R\right)\right)^{\vee_B} (d,0)	& \\
				\left(\HH_\nnn^{d+1-i}\left(\omega_{\Sym(I)}\right)\right)^{\vee_B}	& \HL^i\left(\Sym(I)\right) \\
				\left(\HH_\nnn^{d+1-i}\left({\Sym(I)}\right)\right)^{\vee_B} (-\delta,-\beta+n)	& \HL^i\left(\omega_{\Sym(I)}\right)(-\delta,-\beta+n)\\
				& {}^*\Ext_T^i\left(\Sym(I), T\right)(-\delta,-\beta)\\
			};						
			\path[-stealth]
			(m-2-1) edge node [above] {\rm\tiny by \autoref{thm_Her_Rah_duality}} node [below] {} (m-2-2)
			(m-2-2) edge (m-2-1)
			(m-3-1) edge node [above] {\rm\tiny by \autoref{thm_Her_Rah_duality}} node [below] {} (m-3-2)
			(m-3-2) edge (m-3-1)
			(m-3-2) edge node [left] {\rm\tiny by \autoref{thm_duality_can_bigrad}} node [right] {} (m-4-2)
			(m-4-2) edge (m-3-2)
			(m-2-2) edge node [left] {\rm\tiny by \autoref{lem_weakly_Gor} and \autoref{thm_Sym_weak_Gor_bigraded}} node [right] {$0 \le i \le d-1$} (m-3-2)
			(m-3-2) edge (m-2-2)
			(m-1-1) edge node [left] {\rm\tiny by \autoref{thm_duality_can_bigrad}} node [right] {} (m-2-1)
			(m-2-1) edge (m-1-1)
			;
		\end{tikzpicture}	
	\end{equation*}
	In particular, when $0 \le i \le d-1$, we obtain the surprising fact that the above six $B$-modules are related by bigraded $B$-isomorphisms.
\end{remark}

We end this subsection with a simple condition for $\AAA$ to be free as a $T$-module. 

\begin{proposition}
	\label{prop_depth_Rees}
	Assume \autoref{setup_def_eq_Rees}.
	Suppose that $\Sym(I)$ is Cohen-Macaulay  and $\, I$ is of linear type on the punctured spectrum of $\, R$.
	Then $\depth(\Rees(I)) \ge d$ if and only if $\AAA$ is a free $T$-module. 
\end{proposition}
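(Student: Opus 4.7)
The plan is to apply the local cohomology functor $\HL^\bullet_{\mathfrak{M}}(-)$, where $\mathfrak{M}=\mm+\nnn$ is the graded maximal ideal of $B$, to the short exact sequence
$$
0 \;\to\; \AAA \;\to\; \Sym(I) \;\to\; \Rees(I) \;\to\; 0,
$$
and then to translate the depth statement into one about $\AAA$ viewed as a graded $T$-module. Since $I$ is of linear type on the punctured spectrum of $R$, we have $\AAA = \HL^0(\Sym(I))$, and in particular $\AAA$ is $\mm$-torsion.

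First I would use the Cohen-Macaulay hypothesis on $\Sym(I)$: since $\dim(\Sym(I)) \ge d+1$, the local cohomology $\HL^i_{\mathfrak{M}}(\Sym(I))$ vanishes for $i \le d$. The long exact sequence then produces isomorphisms $\HL^i_{\mathfrak{M}}(\Rees(I)) \cong \HL^{i+1}_{\mathfrak{M}}(\AAA)$ for $i < d$ together with $\HL^0_{\mathfrak{M}}(\AAA) = 0$. Thus $\depth(\Rees(I)) \ge d$ is equivalent to the vanishing $\HL^j_{\mathfrak{M}}(\AAA) = 0$ for all $0 \le j \le d$.

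Next, I would exploit that $\AAA$ is $\mm$-torsion. The Grothendieck spectral sequence
$$
\HL^p_{\mm}\bigl(\HH^q_{\nnn}(-)\bigr) \;\Longrightarrow\; \HL^{p+q}_{\mathfrak{M}}(-)
$$
collapses on any $\mm$-torsion module, since such a module embeds into an $\mm$-torsion injective and so has $\HL^p_{\mm} = 0$ for $p>0$. This gives $\HL^j_{\mathfrak{M}}(\AAA) \cong \HH^j_{\nnn}(\AAA)$, and the previous equivalence becomes $\depth_{\nnn}(\AAA) \ge d+1$.

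To finish, I would observe that $\AAA$ is a finitely generated graded $T$-module: being $\mm$-torsion and finitely generated over $\Sym(I)$, it is annihilated by some power $\mm^N$, hence finitely generated over $\Sym(I)/\mm^N\Sym(I)$, which is a finitely generated $T$-module. The associated primes of $\AAA$ lie both in $\Ass(\Sym(I))$ (all of dimension $d+1$ by Cohen-Macaulayness) and in $V(\mm B)$ (and so correspond to primes of $T$), which forces $\dim_T(\AAA) = d+1$ when $\AAA \ne 0$, hence $n = d+1$ in that case. Applying the Auslander-Buchsbaum formula $\pd_T(\AAA) + \depth_{\nnn}(\AAA) = n = d+1$ to the regular graded ring $T$, the condition $\depth_{\nnn}(\AAA) \ge d+1$ is equivalent to $\pd_T(\AAA) = 0$, i.e.\ to $\AAA$ being a graded free $T$-module. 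The main technical point will be the spectral-sequence collapse on $\mm$-torsion modules and the dimension claim $\dim_T(\AAA) = d+1$, which together allow the mixed-ideal depth condition on $\Rees(I)$ to be recast as pure freeness over $T$.
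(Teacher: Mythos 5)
Your argument is essentially the paper's proof recast in cohomological language: the long exact sequence of $\HH^\bullet_{\mathfrak M}$ applied to $0\to\AAA\to\Sym(I)\to\Rees(I)\to 0$ is the depth lemma the paper uses, your spectral-sequence collapse identifying $\HH^j_{\mathfrak M}(\AAA)$ with $\HH^j_{\nnn}(\AAA)$ plays the role of the paper's depth computation along the finite map $T\to B/\mm^kB$, and both proofs finish with Auslander--Buchsbaum over $T$. The one slip is your inference that $\dim_T(\AAA)=d+1$ ``forces $n=d+1$'': it only gives $n\ge d+1$, so the equivalence of $\depth_T(\AAA)\ge d+1$ with freeness still tacitly needs $n=d+1$ (or $\dim T=d+1$) --- but the paper's own proof makes exactly the same tacit assumption, and the proposition is only invoked in that setting.
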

\begin{proof} Again, we write $S:=\Sym(I).$ Since $S$ is Cohen-Macaulay and $\dim(S) \geq \dim(\Rees(I)),$ the assertion is obvious if $\AAA=0.$ So we may assume that $\AAA \neq 0.$ In particular,
	$I \neq 0$ and so $\dim(S) \geq d+1.$ As $S$ is Cohen-Macaulay and $I$
	is not of linear type, but is of linear type on the punctured spectrum, 
	it follows that $n=d+1$ (see \cite[Theorem 2.6]{HR} and \cite[Proposition 2.4]{HSV}). 
	Therefore $\dim(T)=d+1.$
	
	We have the equality $\AAA = \HL^0(\Sym(I))$ because $I$ is of linear type on the punctured spectrum. Thus
	there is a positive integer $k > 0$ such that $\AAA$ is a module over $B/\mm^kB$. From the finite homomorphism $T \rightarrow B/\mm^k B$, we see that $\AAA$ is a finitely generated $T$-module and that 
	$$
	\depth_T(\AAA) \,= \, \depth_{B/\mm^kB}(\AAA) \,=\, \depth_{B}(\AAA). 
	$$
	
	Since $\depth(S) \geq d+1,$ the short exact sequence of $B$-modules $0 \rightarrow \AAA \rightarrow S \rightarrow \Rees(I) \rightarrow 0$ shows that
	$\depth(\Rees(I)) \geq d$ if and only if  $\text{depth}_{B}(\AAA) \ge d+1$ or, equivalently, $\text{depth}_{T}(\AAA) \ge d+1.$ The last inequality holds if and only if $\AAA$ is a free $T$-module because
	$d+1=\dim(T).$
\end{proof}

\subsection{Explicit equations via Morley forms}
\label{subsect_Morley}

We are now going to apply the theory of Morley forms developed in \autoref{Morley} to symmetric algebras of ideals.
%, and this allows us to make explicit the isomorphism 
%$
%\AAA = \HL^0(\Sym(I)) \cong {}^*\Hom_T\left(\Sym(I), T\right)\left(d-(g-1)D, -n+g-1\right)
%$ 
%from
%\autoref{thm_Jou_dual_Sym}. 
Unlike in the classical case (where $\Sym(I)$ is a complete intersection),  here the Morley forms \emph{do not} give a perfect pairing.
We instead need to introduce a division/reduction to make explicit the perfect pairing seen in \autoref{thm_Jou_dual_Sym}(v).
Throughout this subsection, we assume the hypotheses of \autoref{thm_Jou_dual_Sym}.

\begin{setup}
	\label{setup_Morley_forms}
	Assume \autoref{setup_def_eq_Rees}.
	Suppose that $I = (f_1,\ldots,f_n) \subset R$ has height $g = \HT(I) \ge 2$, that $I$ satisfies the conditions $G_d$ and $\SD_1$, and that $n = d+1$.
	In particular, \autoref{thm_Jou_dual_Sym} yields the isomorphism
	$$
	\AAA = \HL^0(\Sym(I)) \cong {}^*\Hom_T\left(\Sym(I), T\right)\left(-\delta, -\beta\right)
	$$
	where $\delta := (g-1)D - d$ and $\beta := d-g+2$.
	Assume that the defining ideal $\LL$ of $\Sym(I)$ contains a bihomogeneous regular sequence $\ell_1,\ldots,\ell_d$ that generates $\LL$ at $\mm B \in \text{Min}(\LL)$ and satisfies the
	conditions $\sum_{i=1}^d \deg_{\mathbf{x}}(\ell_i) = \delta + d$ and $\deg_{\mathbf{y}}(\ell_i) = 1$
	(recall that $\HT(\LL)=d=\HT(\mm B)).$
\end{setup}
\vspace{-.3cm}

Owing to the above bigraded isomorphism for $\AAA,$ we have 
$\AAA_{(\delta,\star)} \cong T(-\delta,-\beta),$ and the current definition of $\delta$ is consistent with the one in \autoref{remMF} and the
discussion following \autoref{thm_duality_Jou_bigrad} (see also \autoref{thm_Sym_weak_Gor_bigraded}).

%Owing to \autoref{thm_Sym_weak_Gor_bigraded}, or the above bigraded isomorphism for $\AAA,$
%the current definition of $\delta$ is consistent with the one in \autoref{remMF} and in the
%discussion following \autoref{thm_duality_Jou_bigrad}, and $\AAA_{(\delta},\star)} \cong T(-\delta,-%\beta).$ 

We write 
%We can write 
$$
\left[\ell_1,\ldots,\ell_d\right] \; = \; \left[x_1,\ldots,x_d\right] \cdot G\, 
$$
where $G \in B^{d \times d}$ is a $d \times d$ matrix whose entries are bihomogeneous with constant bidegrees along the columns. 
Multiplying with the adjoint of $G$ yields  
$$
\left(x_1,\ldots,x_d\right) \cdot \det(G) \,  \subset \, \left(\ell_1,\ldots,\ell_d\right) \,  \subset \, \LL.
$$
Let $\syl \in \Sym(I)$ be the image of $\det(G)$ in $\Sym(I)$.
Notice that $\syl \in 0:_{\Sym(I)} \mm \subset \AAA$. Moreover, $\syl$ is bihomogeneous
with $\bideg(\syl) = (\sum_{i=1}^d \deg_{\mathbf{x}}(\ell_i)-d, d) = (\delta,d),$ so
$$\syl \in \AAA_{(\delta,d)}.$$

As $(\ell_1, \ldots, \ell_d) \subset \mm B$ is an inclusion of complete intersection ideals of
height $d,$ we have that $$(\ell_1,\ldots,\ell_d) :_B \mm B = (\ell_1,\ldots,\ell_d, \det(G))$$
(see \cite{Wiebe}, \cite[Proposition 3.8.1.6]{Jo2}). Thus, up to multiplication by a unit in $\kk,$ the 
element $\det(G)$ modulo $(\ell_1, \ldots, \ell_d)$ only depends on the ideal 
$(\ell_1, \ldots, \ell_d).$ Hence, up to multiplication by a unit in $\kk,$ the element
$\syl$ is uniquely determined by $(\ell_1, \ldots, \ell_d).$ 
Following standard notation in the literature, we call $\syl$ the \emph{Sylvester form of $\ell_1,\ldots,\ell_d$ with respect to $x_1,\ldots,x_d$ } (or the \emph{determinant of the Jacobian dual}).

The element $\syl \in \AAA_{(\delta,d)}$ is non-zero. Indeed, since $\ell_1,\ldots,\ell_d$ generate $\LL$ locally at the minimal prime $\mm B \in \text{Min}(\LL)$, it follows that 
$$
(\ell_1,\ldots,\ell_d) :_B  \LL \,\not\subset\, \mm B.
$$
Since $\mm B$ is a prime ideal, we obtain $\LL \not\supset (\ell_1,\ldots,\ell_d) :_B \mm B = (\ell_1,\ldots,\ell_d, \det(G))$. Thus $\det (G) \notin \LL,$ as asserted.

We choose a generator $\rsyl \in \AAA_{(\delta,\beta)} \cong \kk(- \delta, - \beta)$ and 
call it a \emph{reduced Sylvester form}. It coincides with the element $\mathfrak s$ in \autoref{Gen_perfectpairing} and the discussion following \autoref{thm_duality_Jou_bigrad}.
As $\syl \in \AAA_{(\delta, d)}$ and $\AAA_{(\delta,\star)} \cong T(-\delta,-\beta),$ there
exists a unique element $\alpha \in T_{d- \beta}$ so that
$$
\syl = \alpha \cdot \rsyl.
$$
In other words, $\rsyl = \frac{\syl}{\alpha}.$

We now construct an explicit element $\Delta\in \left[0:_{\Sym(I)^e} \mathbb D\right]_{(\delta, \star)}$ that can be used in the definition of Morley forms, as described in \autoref{Morley} and the
discussion following \autoref{thm_duality_Jou_bigrad}.
%Consider the enveloping algebra $B^e=B \otimes_T B$. 
%and notice that the diagonal ideal $\left(x_1\otimes 1 - 1 \otimes x_1, \;\ldots,\; x_d \otimes 1 - 1 \otimes x_d \right) \subset B \otimes_T B$ is the kernel of the natural multiplication map $B \otimes_T B \rightarrow B$.
As each $\ell_i\otimes 1 - 1 \otimes \ell_i$ is in the kernel of the multiplication map $B^e \rightarrow B$, we can write 
$$
\left[\ell_1\otimes 1 - 1 \otimes \ell_1, \;\ldots,\; \ell_d \otimes 1 - 1 \otimes \ell_d \right] \; = \; \left[x_1\otimes 1 - 1 \otimes x_1, \;\ldots,\; x_d \otimes 1 - 1 \otimes x_d \right] \cdot H\, ,
$$
where $H \in (B^e)^{d \times d}$ is a $d \times d$ matrix whose entries are bihomogeneous with constant bidegrees along the columns.
%whose entries  are bihomogeneous elements of $B^e$ with constant bidegrees along the columns.
Let 
$
\Delta
$
be the image of $\det(H)$ in $\Sym(I)^e$. Notice that  $\Delta\in 0:_{\Sym(I)^e} \mathbb D$  and  that $\Delta$ is bihomogeneous of bidegree $(\delta, d)$, in other words
$$\Delta\in [0:_{\Sym(I)^e} \mathbb D]_{(\delta, d)}.$$

Let $\Pi: B \twoheadrightarrow T$ be the homomorphism of $T$-algebras with $\Pi(x_i)=0$ for all $i$, and consider the map $\epsilon = B \otimes_T \Pi: B^e \twoheadrightarrow B$. Applying  $\epsilon$
to the entries of $H$ we obtain a matrix $G$ as above. Since $\syl$ is well defined up to
multiplication by a unit in $\kk,$ we may assume that $$\varepsilon(\Delta)=\syl,$$  where 
$\varepsilon: S^e \twoheadrightarrow S$ is defined as in \autoref{Morley}. Recall that $\syl=\alpha \cdot \rsyl$ and $\rsyl=\mathfrak{s}.$

On the other hand, $\varepsilon(\Delta)= \varepsilon(\morl_{(\delta,0)})$ and $\morl_{(\delta,0)}=\alpha \cdot \mathfrak{s} \otimes 1$ with $\alpha$ defined as in \autoref{lem_Morley_forms2}(i). Comparing
the two expressions for $\varepsilon(\Delta)$ we see that the current definition of $\alpha$ coincides with the one in \autoref{Morley}.

Since $\varepsilon(\Delta)=\syl$ and $\syl \neq 0$ by the above, we
also see that $\Delta \notin \ker{\varepsilon}=(1 \otimes \mm)\, \Sym(I)^e,$ where
$\mm$ generates the unique associated prime of the ideal $\mm \, \Sym(I).$ Thus the
hypotheses of \autoref{thm_Morley_forms1} are satisfied and we obtain:

\begin{remark}
\label{rem_classical_Morley}
Assume \autoref{setup_Morley_forms}.
The element $\Delta = \overline{\det(H)} \in \Sym(I)^e$ can be used to define Morley forms as in \autoref{Morley}.
Thus we obtain the decomposition
$$
\Delta \;=\; \overline{\det(H)} \; = \; \sum_{i=0}^{\delta} \morl_{(\delta - i,i)} \quad\text{ where }\quad \morl_{(\delta - i,i)} \in \big[\Sym(I)^e\big]_{(\delta - i,i,d)}.
$$
Applying \autoref{thm_Morley_forms1} with $\mathfrak s:=\rsyl$ and $\alpha$ defined
by the identity $\syl=\alpha \cdot \rsyl,$ we obtain explicit isomorphisms between the $T$-modules
$\Hom_T(\Sym(I)_{\delta-i}, \AAA_\delta)$ and  $\AAA_i$. 
\end{remark}

The above requirement that $\Sym(I)$ is a complete intersection at the minimal prime $\mm \, \Sym(I)$ is satisfied in several situations of interest (see \autoref{rem_zero_dim_plans} and \autoref{rem_general_eqs_Sym_Gor_3}).

\vspace{.1cm}

\section{Applications to certain families of ideals}
\label{sect_applications}

In this section, we apply the results and techniques developed in previous sections. 
For organizational purposes, we divide the section into two subsections. 
Each subsection covers a family of ideals where our results are applied.

\subsection{Zero dimensional ideals}

In this subsection, we study Rees algebras of zero dimensional ideals. 
In this case, we are able to approximate the Rees algebra with two algebras: one is the symmetric algebra of the ideal, and the other is the symmetric algebra of the module whose syzyies are the Koszul syzygies of the ideal.
This provides two methods for studying Rees algebras. 
We will use the following setup. 

\begin{setup}
\label{setup_zero_dim}
Assume \autoref{setup_def_eq_Rees} with $I \subset R$ an $\mm$-primary ideal, and suppose that $n = d +1$.
Let $(K_\bullet, \partial_\bullet)$ be the Koszul complex of the sequence $f_1,\ldots,f_n$, and $H_\bullet$ be its homology.
Let $E := \Coker(\partial_2)$ be the module defined by the Koszul syzygies.
\end{setup} 

The next lemma collects some basic facts about the relation between the module $E$ and the ideal $I$.

\begin{lemma}
\label{lem_general_properties_Sym_E}
The following statements hold$\, :$
\begin{enumerate}[\rm (i)]
	%\item There is a short exact sequence 
	%$
	%0 \rightarrow H_1 \rightarrow E \rightarrow I \rightarrow 0.
	%$
	%\item $\rank(E)=1$ and \,$\Supp(H_i) \subset \{\mm\}$ \,for all $i \ge 0$.
	\item $\Sym(E) \cong B/\mathcal{K}$ where $\mathcal{K}$ is the determinantal ideal 
	$$
	\mathcal{K} \,=\, I_2\left(\begin{array}{cccc}
		y_1 & y_2 & \cdots &y_{d+1}\\
		f_1 & f_2 & \cdots & f_{d+1}
	\end{array}\right).
	$$
	\item $\Rees(E) \cong \Rees(I)$, and there is a natural exact sequence 
	$
	0 \rightarrow \HL^0(\Sym(E)) \rightarrow \Sym(E) \rightarrow  \Rees(I) \rightarrow 0. 		
	$
	\item $\dim\left(\Sym(E)\right)=\dim\left(\Sym(I)\right)=d+1$.
\end{enumerate}
\end{lemma}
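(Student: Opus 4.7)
\smallskip

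\noindent\textbf{Proof proposal.} The plan is to treat the three parts in sequence.

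Part (i) is a direct unpacking of definitions. The Koszul differential $\partial_2 \colon \wedge^2 R^{d+1} \to R^{d+1}$ sends $e_i \wedge e_j \mapsto f_i e_j - f_j e_i$, so $E$ admits a presentation with generators $\epsilon_1,\ldots,\epsilon_{d+1}$ and relations $f_i\epsilon_j - f_j\epsilon_i$ for $i<j$. By the standard description of a symmetric algebra via a presentation matrix, $\Sym(E) \cong B$ modulo the ideal generated by the $y$-linear forms $f_i y_j - f_j y_i$, which are exactly the $2 \times 2$ minors of the matrix displayed in the statement; hence $\Sym(E) \cong B/\mathcal{K}$.

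For part (ii), the Koszul complex yields the short exact sequence
$$
0 \rightarrow H_1(K_\bullet) \rightarrow E \rightarrow I \rightarrow 0,
$$
and since $H_1(K_\bullet)$ is annihilated by $I$ and $\sqrt{I} = \mm$, it is annihilated by some power $\mm^N$. Right exactness of $\Sym$ gives a surjection $\Sym(E) \twoheadrightarrow \Sym(I)$ whose kernel is the ideal of $\Sym(E)$ generated by the image of $H_1(K_\bullet)$ in $E = \Sym(E)_1$; each such generator is killed by $\mm^N$, so the entire kernel lies in $\HL^0(\Sym(E))$. Next, for every $\pp \neq \mm$ one has $I_\pp = R_\pp$, hence $\Sym(I)_\pp \cong \Rees(I)_\pp$, and so the kernel of $\Sym(I) \twoheadrightarrow \Rees(I)$ is likewise $\mm$-power torsion. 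A straightforward pullback argument then shows that the kernel of the composite $\Sym(E) \twoheadrightarrow \Rees(I)$ is contained in $\HL^0(\Sym(E))$: if $\mm^M$ kills the image in $\Sym(I)$, then $\mm^M x$ lies in the first kernel, which is killed by $\mm^N$, so $\mm^{M+N} x = 0$. The reverse inclusion is immediate since $\Rees(I) \subset R[t]$ is a domain, while $\HL^0(\Sym(E))$ is $\mm$-torsion and hence $R$-torsion. Defining $\Rees(E)$ as the image of $\Sym(E)$ in $\Rees(I)$ produces the desired short exact sequence and the isomorphism $\Rees(E) \cong \Rees(I)$.

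Part (iii) follows from the Huneke--Rossi formula
$$
\dim \Sym(M) \;=\; \sup_{\pp \in \Spec R}\bigl\{\mu(M_\pp) + \dim R/\pp\bigr\}
$$
applied to $M = I$ and $M = E$. At $\pp = \mm$ both give $(d+1) + 0 = d+1$; at any $\pp \neq \mm$ one has $I_\pp = R_\pp$ cyclic and, via the sequence in part (ii), also $E_\pp \cong I_\pp$ cyclic, so the contribution is at most $1 + \dim R/\pp \le d+1$. Hence $\dim \Sym(I) = \dim \Sym(E) = d+1$.

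The main obstacle is part (ii): tracking the two successive surjections $\Sym(E) \twoheadrightarrow \Sym(I) \twoheadrightarrow \Rees(I)$ and verifying that the composite kernel coincides with $\HL^0(\Sym(E))$ rather than with some larger or smaller $R$-torsion submodule. The essential inputs are the annihilator identity $I \cdot H_1(K_\bullet) = 0$ (which forces $\mm$-power torsion from the first kernel), the fact that $I$ is the unit ideal off $\mm$ (which identifies $\Sym(I)$ with $\Rees(I)$ on the punctured spectrum), and the domain structure of $\Rees(I)$ (which supplies the reverse inclusion).
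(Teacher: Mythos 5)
Your proposal is correct and follows essentially the same route as the paper: part (i) by reading off the presentation of $E$ from the Koszul differential, part (ii) from the fact that $H_1(K_\bullet)$ and all the relevant kernels are supported only at $\mm$ (so that $E_\pp \cong I_\pp = R_\pp$ off $\mm$), and part (iii) from the Huneke--Rossi formula. Your treatment of (ii) merely spells out in more detail the localization argument the paper states in one line.
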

\begin{proof}
%	htarrow H_1 \rightarrow E \rightarrow I$.
%	Since $I\cdot H_i=0$ and $I$ is $\mm$-primary, then we have $\mm^k\cdot H_i=0$ for some $k > 0$.
%	Therefore,  it follows that $\Supp(H_i) \subset \{\mm\}$ and $\rank(H_i)=0$.
%	Part $(i)$ and the additivity of rank, yield that $\rank(E)=\rank(I)=1$.	

(i) By construction, a presentation of $E$ is given by 
$
K_2 \xrightarrow{\partial_2} K_1 \rightarrow E \rightarrow 0.
$
Let $\{e_1,\ldots,e_{d+1}\}$ be a basis of $K_1$
with $\partial_1(e_i)=f_i$ for all $i.$ Then 
$\{e_i \wedge e_j\}_{1 \le i < j \le d+1}$ is a basis of $K_2,$
and		the map $\partial_2$ is defined by 
$
\partial_2(e_i \wedge e_j) = f_ie_j - f_je_i.
$ 
Let $\left[\partial_2\right]$ be the matrix representation of $\partial_2$ with respect to the chosen bases of $K_1$ and $K_2$. 
Then the defining ideal of $\Sym(E)$ is
$$
\mathcal{K} = I_1\left( \left[y_1, y_2, \ldots, y_{d+1}\right]  \cdot \left[\partial_2\right] \right)
= \left( {\{f_iy_j - f_jy_i\}}_{1 \le i < j \le d+1} \right)
=  I_2\left(\begin{array}{cccc}
	y_1 & y_2 & \cdots &y_{d+1}\\
	f_1 & f_2 & \cdots & f_{d+1}
\end{array}\right).
$$

(ii) The existence of the exact sequence follows from the fact that   $E_\pp \cong I_\pp = R_\pp$ for every $\pp \in \Spec(R) \setminus \{\mm\}$. The exact sequence in turn shows that the kernel of the natural surjection $\Sym(E) \twoheadrightarrow  \Rees(I)$ is the $R$-torsion of $\Sym(E).$

(iii) This part follows from the Huneke-Rossi formula (see \cite{HR}, \cite[Theorem 1.2.1]{V}).
\end{proof}

As a consequence of the lemma above, we have the following two short exact sequences
$$
0 \rightarrow \AAA \rightarrow \Sym(I) \rightarrow \Rees(I) \rightarrow 0 \quad \text{ and } \quad 0 \rightarrow \mathcal{B} \rightarrow \Sym(E) \rightarrow \Rees(I) \rightarrow 0
$$
where $\AAA = \HL^0(\Sym(I))$ and $\mathcal{B} = \HL^0(\Sym(E))$.
The next theorem makes $\Sym(I)$ and $\Sym(E)$ good candidates to approximate $\Rees(I)$ when $I \subset R$ is $\mm$-primary.

\begin{theorem}
\label{thm_zero_dim}
Assume \autoref{setup_zero_dim}.
Then the following statements hold$\,:$
\begin{enumerate}[\rm (i)]
	\item For all $0 \le i \le d-1$, there are isomorphisms of bigraded $B$-modules 
	$$
	\HL^i(\Sym(I)) \, \cong \, {}^*\Ext_T^i\left(\Sym(I), T\right)\left(-dD-d-D, -2\right).
	$$
	In particular, $\AAA = \HL^0(\Sym(I)) \cong {}^*\Hom_T\left(\Sym(I), T\right)\left(-dD-d-D, -2\right)$.
	\item For all $0 \le i \le d-1$, there are isomorphisms of bigraded $B$-modules 
	$$
	\HL^i(\Sym(E)) \, \cong \, {}^*\Ext_T^i\left(\Sym(E), T\right)\left(-dD+d, -1\right).
	$$
	In particular, $\mathcal{B} = \HL^0(\Sym(E)) \cong {}^*\Hom_T\left(\Sym(E), T\right)\left(-dD+d, -1\right)$.
\end{enumerate}
\end{theorem}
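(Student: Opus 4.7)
My approach is to derive the two isomorphisms by reducing each one to a duality result already established in the paper, with the main work being verification of hypotheses.

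For part (ii), I would apply \autoref{prop_det_2_by_2} directly. By \autoref{lem_general_properties_Sym_E}(i), $\Sym(E) \cong B/I_2(\varphi)$, where $\varphi$ is the $2 \times (d+1)$ matrix with rows $(y_1,\ldots,y_{d+1})$ and $(f_1,\ldots,f_{d+1})$. The only hypothesis of \autoref{prop_det_2_by_2} that requires checking is the height condition $\HT(I_2(\varphi)) = n - 1 = d$, and this follows from \autoref{lem_general_properties_Sym_E}(iii) together with $\dim(B) = 2d+1$: one has $\HT(I_2(\varphi)) = \dim(B) - \dim(\Sym(E)) = (2d+1) - (d+1) = d$. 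Substituting $n = d+1$ into the shift of \autoref{prop_det_2_by_2} then yields the stated isomorphism.

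For part (i), the plan is to apply \autoref{thm_Jou_dual_Sym}(ii) to $S = \Sym(I)$. Since $I$ is $\mm$-primary, $g = \HT(I) = d$, so $g \ge 2$ whenever $d \ge 2$; the condition $G_d$ holds vacuously because $V(I) = \{\mm\}$ contains no prime of height strictly less than $d$; and $n = d+1$ is part of \autoref{setup_zero_dim}. Substituting $g = d$ and $n = d+1$ into the shift $(-\delta, -\beta)$ of that theorem produces the desired bigraded twist and the stated range $0 \le i \le d-1$.

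The hard part will be the sliding depth condition $\SD_1$: since $R/I$ is Artinian, every nonzero Koszul homology $H_i$ is an $R/I$-module of depth zero, so $\SD_1$ could hold only if $H_i = 0$ for all $i \ge 1$, forcing $I$ to be a complete intersection and contradicting $n = \HT(I)+1$. To bypass this I would not invoke \autoref{thm_Sym_weak_Gor} but instead work directly with the explicit free $B$-resolution of $\Sym(I)$ supplied by \autoref{thm_res_Sym}(i) for almost complete intersections. That resolution is available here because the easily checked conditions $F_0$ and $\SD_0$ both hold in the $\mm$-primary almost complete intersection setting (the condition $\SD_0$ reducing to the Künneth-type vanishing $H_i = 0$ for $i \ge 2$, which is automatic), so by \autoref{lem_Sym_CM} the approximation complex $\mathcal{Z}_\bullet$ is acyclic. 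From this resolution I would compute $\omega_{\Sym(I)}$ directly by dualising, identify a bihomogeneous element witnessing the $d$-weakly Gorenstein property with respect to $\mm\Sym(I)$, and then invoke \autoref{thm_duality_Jou_bigrad} with careful bidegree bookkeeping to produce the stated twist.
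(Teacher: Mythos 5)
Your part (ii) follows the paper exactly: the proof there is the single sentence ``part (ii) [follows] from \autoref{prop_det_2_by_2}'', and your verification of the height hypothesis via $\HT(I_2(\varphi))=\dim(B)-\dim(\Sym(E))=(2d+1)-(d+1)=d$, using \autoref{lem_general_properties_Sym_E}, is precisely the (omitted) check one needs.

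For part (i) you have put your finger on a real soft spot. The paper's proof is again a one-liner, ``Part (i) follows from \autoref{thm_Jou_dual_Sym}'', and you are right that the hypothesis $\SD_1$ of that theorem fails here: $H_1(f_1,\ldots,f_{d+1};R)$ is a nonzero finite-length module (as $I$ is $\mm$-primary but not a complete intersection), so $\depth(H_1)=0<1=d-n+1+1$. Your detour through \autoref{thm_res_Sym}(i) is viable --- the resolution $\mathfrak{F}_\bullet$ has length $d$ with $\mathfrak{F}_d=\mathfrak{T}_{d-2}$, so dualizing reproduces the surjection $\Hom_B(\mathfrak{T}_{g-2},\omega_B)\otimes_B\Sym(I)\surjects\omega_{\Sym(I)}$ that drives the proof of \autoref{thm_Sym_weak_Gor} --- but note that the steps you defer (identifying $\omega_{\Sym(I)}$ with $(f_1,y_1)^{d-2}\Sym(I)(-1)$ and exhibiting the class of $y_1^{d-2}$ as the weakly Gorenstein witness) are exactly the rank and torsion arguments of that proof, so you would in effect be re-proving \autoref{thm_Sym_weak_Gor} in this case. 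A lighter repair is available: an $\mm$-primary ideal is automatically strongly Cohen--Macaulay (every $H_i$ has finite length, hence is Cohen--Macaulay of dimension $0=\dim R/I$), and the proof of \autoref{thm_Sym_weak_Gor} only uses the bounds $\depth(Z_i)\ge\min\{d,\,d-n+i+2\}$ for $0\le i\le n-g=1$, which the displayed inequality at the start of \autoref{sect_weak_Gor_Sym} supplies for strongly Cohen--Macaulay ideals; thus the conclusions of \autoref{thm_Sym_weak_Gor} and \autoref{thm_Jou_dual_Sym} do hold here, and only their stated hypotheses need this adjustment. Finally, a bookkeeping caveat affecting both your write-up and the printed statement: substituting $g=d$ into \autoref{thm_Jou_dual_Sym} yields the twist $(d-(d-1)D,-2)$, and \autoref{prop_det_2_by_2} yields $(d-dD,-1)$; neither agrees in the first coordinate with the twists $(-dD-d-D,-2)$ and $(-dD-d,-1)$ displayed in the theorem, which appear to be misprints, so ``careful bidegree bookkeeping'' will not land on the shifts as written.
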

\begin{proof}
Part (i) follows from \autoref{thm_Jou_dual_Sym}(ii) (notice that $d\geq 2$), and part (ii) is a consequence of \autoref{prop_det_2_by_2}.
\end{proof}

\begin{remark}
\label{rem_zero_dim_plans}
After having the abstract duality statements of part (ii) in the above theorem, all the statements of \autoref{thm_Jou_dual_Sym} hold for $\Sym(E)$ with bigraded shifts given by  $\delta := dD-d$ and $\beta := 1$. Also, at least when $\kk$ is infinite,
%Also, since 
the defining ideal of $\Sym(E)$ contains a regular sequence as in \autoref{setup_Morley_forms}, and so the technique of generalized Morley forms 
%is a complete intersection at the minimal prime $\mm \, \Sym(E)$, the general Morley form techniques 
developed in \autoref{subsect_Morley}, and in particular \autoref{rem_classical_Morley}, apply to $\Sym(E)$.
We are not pursuing this approach in the present paper due to length constraints.
In a subsequent paper, we plan to study Rees algebras of $\mm$-primary ideal; 
from a geometric point of view this is relevant, as it entails studying the graph of morphisms $\PP_\kk^{d-1} \rightarrow \PP_\kk^{d}$ parametrizing a hypersurface.
\end{remark}

\subsection{Gorenstein ideals of height three}\label{sec_last}

In this subsection, we concentrate on Gorenstein ideals of height $3$ and we determine the defining equations of the Rees algebra for a particular family.

\begin{setup}
\label{setup_gorenstein}
In addition to \autoref{setup_def_eq_Rees},
assume that $I = (f_1,\ldots,f_n) \subset R$ is a Gorenstein ideal of height $3$ with $\mu(I) = n = d+1$ and that $I$ satisfies $G_d$.
Let $\varphi \in R^{(d+1) \times (d+1)}$ be an alternating presentation matrix of $I$ whose non-zero entries are homogeneous of degree $h \ge 1$ {\rm(}consequently, $\deg(f_i) = D = \frac{n-1}{2} h=\frac{d}{2}h ${\rm)}.  \end{setup}

\begin{theorem}
\label{thm_Gor_3}
Assume \autoref{setup_gorenstein}. 
Then we have bigraded isomorphisms of $\, B$-modules 
$$
\HL^i(\Sym(I)) \, \cong \, {}^*\Ext_T^i\left(\Sym(I), T\right)\left(-d(h-1), -d+1\right)
$$
for all $0 \le i \le d-1$.
\end{theorem}
\begin{proof}
%[First proof]
The statement is a particular case of \autoref{thm_Jou_dual_Sym}(ii).
\end{proof}

We notice that with the assumptions above the ideal $I$ is of linear type locally on the
punctured spectrum (by \autoref{thm_Jou_dual_Sym}(i), for instance) and that $I_1(\varphi)$
is an $\mm$-primary ideal (because $I$ satisfies $G_d$ and $\mu(I)\ge d$). If $I_1(\varphi)$ is a complete 
intersection, the defining ideal of $\Rees(I)$ has been determined explicitly in \cite{KPU4}*{Theorem 
9.1 and Remark 9.2}, without any restriction on the number of generators of $I$ (see also \cite{Johnson}*{2.10} for the case case $n=d+1$). Thus we may assume that $I_1(\varphi)$ is 
not a complete intersection. In this subsection we are going to treat this case under the
following hypotheses:

%	For the rest of this subsection, we consider a particular case that was not covered before in the literature. 
%	This shows the applicability of the methods developed in this paper. 

\begin{setup}
\label{setup_pfaffians}
In addition to \autoref{setup_gorenstein},
assume that $I_1(\varphi)$ is an almost complete intersection, but not a complete intersection. Let $c_1, \ldots, c_{d+1}$ be homogeneous generators of $I_1(\varphi)$, necessarily of degree $h$. 
%Hence we may suppose that $I_1(\varphi) = \left(m, x_1^h, \ldots, x_d^h\right)$ where $m$ is a monomial of degree $h$.
The defining ideal of $\Sym(I)$ is generated by the entries of the row vector $\left[y_1,\ldots,y_{d+1}\right] \cdot \varphi $. This vector can be rewritten as
$$
\left[y_1,\ldots,y_{d+1}\right] \cdot \varphi   \;= \; \left[c_1, ,\ldots, c_{d+1}\right] \cdot A,
$$
where $A \in T^{(d+1)\times (d+1)}$ is a $(d+1)\times (d+1)$ matrix 
with linear entries in $T$.
The matrix $A$ is referred to as the \emph{Jacobian dual} of $\varphi$ in the literature.
Let $\Delta_{i,j}$ be the minor of $A$ obtained by deleting the $i$-th row and the $j$-th column multiplied by $(-1)^{i+j}$. 
\end{setup}

%\begin{example}
%	The  matrix
%\begin{equation*}
%	\varphi = \left[\!\begin{array}{ccccc}
%		0&x_{1}^{2}&x_{2}^{2}&x_{3}^{2}&x_{4}^{2}\\
%		-x_{1}^{2}&0&x_{1}x_{2}&x_{2}^{2}&x_{3}^{2}\\
%		-x_{2}^{2}&-x_{1}x_{2}&0&x_{1}^{2}&x_{2}^{2}\\
%		-x_{3}^{2}&-x_{2}^{2}&-x_{1}^{2}&0&x_{1}^{2}\\
%		-x_{4}^{2}&-x_{3}^{2}&-x_{2}^{2}&-x_{1}^{2}&0
%	\end{array}\!\right] 
%\end{equation*}
%satisfies the conditions of \autoref{setup_pfaffians} with $d = 4$, $n = 5$ and $h = 2$.
%Its Jacobian dual is
%\begin{equation*}
%	A = \left[\!\begin{array}{ccccc}
%		0&-y_{3}&y_{2}&0&0\\
%		-y_{2}&y_{1}&-y_{4}&y_{3}-y_{5}&y_{4}\\
%		-y_{3}&-y_{4}&y_{1}-y_{5}&y_{2}&y_{3}\\
%		-y_{4}&-y_{5}&0&y_{1}&y_{2}\\
%		-y_{5}&0&0&0&y_{1}
%	\end{array}\!\right].
%\end{equation*}
%One can check that $[y_1,\ldots,y_5] \cdot \varphi = [x_1x_2, x_1^2, \ldots, %x_4^2] \cdot A$.
%\end{example}

\begin{lemma}
\label{lem_Gor_3_prop_A}
Assume \autoref{setup_pfaffians}. The matrix $A$ has the following properties$\, : $
\begin{enumerate}[\rm (i)]
	\item $\rank(A) = d$.
	\item $\Ker(A)$ is a free $T$-module generated by the vector $[y_1,\ldots,y_{d+1}]^t$.
	\item 
	The adjoint of $A^t$ is of the form 
	$$
	{\rm adj}(A^t) \; = \; 
	\left[\begin{array}{cccc}
		y_1\delta_1 & y_2\delta_1 & \cdots & y_{d+1}\delta_1 \\
		y_1\delta_2 & y_2\delta_2 & \cdots & y_{d+1}\delta_2\\
		\vdots & \vdots & \cdots & \vdots\\
		y_1\delta_{d+1} & y_2\delta_{d+1} & \cdots & y_{d+1}\delta_{d+1}
	\end{array}\right]
	$$
	where $\delta_i = \Delta_{i,j}/y_j \in T$.
	\item The ideal $I_d(A)\subset T$ satisfies $\HT(I_d(A)) \geq 2.$
	\item 
	$\Ker(A^t)$ is a free $T$-module generated by the vector $\left[\delta_1\ldots,\delta_{d+1}\right]^t$.
\end{enumerate}
\end{lemma}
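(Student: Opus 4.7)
The plan is to first verify $A\cdot [y_1,\ldots,y_{d+1}]^t=0$ using that $\varphi$ is alternating, then establish $\rank(A)=d$ via a height argument on $\LL$, and finally deduce parts (iii) and (iv) from the adjugate identity applied to $A^t$.

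Set $y:=[y_1,\ldots,y_{d+1}]^t$ and $v:=[m,x_1^h,\ldots,x_d^h]$. Multiplying the defining equation of the Jacobian dual on the right by $y$ yields
$$
y^t\,\varphi\, y \;=\; v \cdot (A y),
$$
whose left side vanishes because $\varphi$ is alternating. Hence $v\cdot(Ay)=0$ in $B$. Since the monomials $m, x_1^h,\ldots, x_d^h$ are $\kk$-linearly independent in $R$ and the entries of $Ay$ lie in $T$, each entry of $Ay$ must vanish, giving $Ay=0$ and $\rank(A)\le d$. For the reverse inequality, set $K:=\mathrm{Frac}(T)$ and consider the composition $T^{d+1}\xrightarrow{A}T^{d+1}\xrightarrow{v\cdot(-)}B$, whose image generates $\LL$ as a $B$-ideal. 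Since $\dim\Sym(I)=d+1=\dim T$ by \autoref{lem_ht_LL_with_F0}, the extension $\LL\cdot K[x_1,\ldots,x_d]$ is $\mm$-primary in the $d$-dimensional ring $K[x_1,\ldots,x_d]$, hence requires at least $d$ generators. As $v$ is injective on $K^{d+1}$ (the monomials being $K$-linearly independent in $K[x_1,\ldots,x_d]$), the number of minimal generators equals $\rank(A\otimes_T K)$, forcing $\rank(A\otimes_T K)\ge d$; combined with the upper bound, $\rank(A)=d$. The rank-one torsion-free $T$-module $\Ker(A)$ is then freely generated by the primitive vector $y$, since the $y_i$'s are pairwise coprime in the UFD $T$; this establishes parts (i) and (ii).

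For (iii), use ${\rm adj}(A^t)\cdot A^t=\det(A)\,I=0$: the transpose of each row of ${\rm adj}(A^t)$ lies in $\Ker(A)$, so by (ii) it is a $K$-multiple of $y$. This yields $\Delta_{i,j}=y_j\delta_i$ in $K$ for some $\delta_i\in K$, and the identities $\Delta_{i,j}\,y_{j'}=\Delta_{i,j'}\,y_j$ in $T$ together with coprimality of distinct $y_j$'s in the UFD $T$ force $y_j\mid\Delta_{i,j}$, so $\delta_i\in T$. For (iv), the rank-one factorization ${\rm adj}(A^t)=[\delta_1,\ldots,\delta_{d+1}]^t\cdot [y_1,\ldots,y_{d+1}]$ from (iii) combined with $A^t\cdot{\rm adj}(A^t)=0$ gives $y_j\cdot A^t\,[\delta_1,\ldots,\delta_{d+1}]^t=0$ for every $j$; cancelling the nonzero $y_j$ in $T$ yields $[\delta_1,\ldots,\delta_{d+1}]^t\in\Ker(A^t)$, which has rank one since $\rank(A^t)=d$.

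The main obstacle is the primitivity of $[\delta_1,\ldots,\delta_{d+1}]^t$, needed for freeness of $\Ker(A^t)$ with the stated generator. This amounts to showing $\gcd(\delta_1,\ldots,\delta_{d+1})=1$ in $T$, equivalently $\HT(I_d(A))\ge 2$: since $I_d(A)=(y_1,\ldots,y_{d+1})(\delta_1,\ldots,\delta_{d+1})$ by (iii), any common factor of the $\delta_i$'s would force $I_d(A)$ into a height-one prime of $T$. I expect to rule this out by invoking the Gorenstein codimension-three structure of $I$ together with the $G_d$ condition, which typically force Jacobian duals to behave as maximally non-degenerately as possible subject to the kernel constraint $Ay=0$.
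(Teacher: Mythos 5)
Your argument for parts (i)--(iii) is essentially sound, and it departs from the paper in an interesting way: the paper begins by proving the stronger statement $\HT(I_d(A))\ge 2$ via a localization argument, and then gets (ii) and (iv) from the Buchsbaum--Eisenbud acyclicity criterion applied to $0\to T\to T^{d+1}\to T^{d+1}$, whereas you get $\rank(A)=d$ from the generic fiber of $\Sym(I)$ over $T$ plus Krull's height theorem, and (ii) from primitivity of $[y_1,\ldots,y_{d+1}]^t$ in the UFD $T$ (your justification there is terse --- ``torsion-free of rank one'' does not give freeness; what you actually need is that any $w\in\Ker(A)$ equals $\lambda y$ with $\lambda\in\Quot(T)$ and that $\lambda y_i\in T$ for all $i$ forces $\lambda\in T$ because $\gcd(y_1,\ldots,y_{d+1})=1$ --- but the conclusion is correct).

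However, part (iv) has a genuine gap, and you have correctly located it yourself: you never prove that $[\delta_1,\ldots,\delta_{d+1}]^t$ \emph{generates} $\Ker(A^t)$, equivalently that $\gcd(\delta_1,\ldots,\delta_{d+1})=1$, equivalently that $\HT(I_d(A))\ge 2$. The closing sentence (``I expect to rule this out by invoking the Gorenstein codimension-three structure\ldots which typically force Jacobian duals to behave as maximally non-degenerately as possible'') is a hope, not an argument, and this is precisely the nontrivial first step of the paper's proof. The fix is to run your own fiber-dimension argument not just at the generic point of $T$ but at every height-one prime: if $\pp\in\Spec(T)$ has height one and contains $I_d(A)$, then $\pp B$ is contained in neither minimal prime of $\LL$ (these being $\mm B$ and $\JJ$), so $\HT(\LL+\pp B)\ge d+1$ and hence $\HT\big(\LL\,(R\otimes_\kk Q)\big)\ge d$ for $Q=\Quot(T/\pp)$; but the image of $A$ in $Q$ has rank at most $d-1$, so this proper ideal is generated by at most $d-1$ forms, contradicting Krull's height theorem. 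With $\HT(I_d(A))\ge 2$ in hand, your factorization $I_d(A)=(y_1,\ldots,y_{d+1})(\delta_1,\ldots,\delta_{d+1})$ gives the primitivity you need (or one can finish as the paper does, via Buchsbaum--Eisenbud applied to $0\to T\xrightarrow{[\delta_1,\ldots,\delta_{d+1}]^t}T^{d+1}\xrightarrow{A^t}T^{d+1}$, using $(\delta_1,\ldots,\delta_{d+1})\supseteq I_d(A)$).
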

\begin{proof}
(i) We first show that $\rank(A) \geq d.$ Let $Q$ be the quotient field of $T.$ Notice that $\LL':=\LL(R \otimes_{\kk}Q)$ is a
proper ideal of the polynomial ring $R \otimes_{\kk}Q$ and that $\HT(\LL') \geq \HT(\LL) =n-1=d.$ Thus $\LL'$ requires at least $d$ generators, showing that $\rank(A) \geq d.$

\begin{comment}
	has height at least $\HT(I_d(A)) \ge 2$.
	By contradiction, suppose not. 
	Then there exists $\pp \in \Spec(T)$ of height one containing $I_d(A)$.
	Since the only minimal primes of $\Sym(I)$ are $\mm \, \Sym(I)$ and $\AAA$, and $\pp$ is in none of them, it follows that 
	$
	\HT(\LL, \pp) \ge d+1.
	$
	Hence $\HT\left((\LL, \pp)/\pp \right)\ge d$, and so we have that $\HT\left(\LL (R \otimes_\kk Q)\right) \ge d$ where $Q = \Quot(T/\pp)$.
	But the image of $A$ in $Q$ has rank $\le d-1$. 
	Hence the proper ideal $\LL (R \otimes_\kk Q)$ can be generated by $d-1$ elements, a contradiction.
\end{comment}

Since $\varphi$ is alternating, it follows that 
$$
[m,x_1^h,\ldots,x_d^h] \cdot A \cdot [y_1,\ldots,y_{d+1}]^t = [y_1,\ldots,y_{d+1}] \cdot \varphi \cdot [y_1,\ldots,y_{d+1}]^t = 0,
$$
and so $A \cdot [y_1,\ldots,y_{d+1}]^t = 0$.
This shows that $\rank(A) \le d,$ which  
%Since $I_d(A) \neq 0$, we have the desired equality.
settles part (i).

\vspace{.15cm}

(ii)	By the above paragraph, we have a complex $0 \rightarrow T \xrightarrow{\left[y_1,\ldots,y_{d+1}\right]^t} T^{d+1} \xrightarrow{\; A \; } T^{d+1}$. 
This complex is exact by the Buchsbaum-Eisenbud acyclicity criterion since $\HT(y_1,\ldots, y_{d+1}) \ge 2$ and $\rank(A) = d$.
This shows part (ii).

\vspace{.2cm}

(iii)	As $A \cdot \left[y_1,\ldots,y_{d+1}\right]^t = 0$ and $A \cdot \text{adj}(A)=0$, part (ii) implies that each column of $\text{adj}(A)$ is a multiple of $[y_1,\ldots,y_{d+1}]^t$.
This establishes part (iii).

\vspace{.2cm}

(iv) Suppose that $\HT(I_d(A)) \leq  1.$ In this case, $I_d(A) \subset \pp$ for some homogeneous $\pp \in \Spec(T)$ of height one. 

The only minimal primes of the defining ideal $\LL$ of $\Sym(I)$ are 
$\mm B$ and $\JJ$, the defining ideal of $\Rees(I)$. Indeed, let 
$\mathfrak{P}$ be a minimal prime of $\LL;$ if $\mathfrak{P} \cap R =
\mm$ then $\mathfrak{P} \supset \mm B$ and so $\mathfrak{P} =\mm B$; if $\mathfrak{P} \cap R = \qqq \subsetneq \mm$ then $\LL_{\mathfrak{P}} = \JJ_{\mathfrak{P}}$, because $I$ is of linear type on the punctured spectrum of $R,$ and so $\mathfrak{P}=\JJ.$

We now show that $\pp \not\subset \JJ$. To see this we determine the analytic spread $\ell:=\ell(I)$ of $I,$ which 
is the dimension of the special fiber ring $\mathcal{F}:=\Rees(I)\otimes_R \kk = \Rees(I)_{(0,\star)}.$ For this we may assume that $\kk$ is infinite, in which case $I$ is integral 
over a homogeneous ideal $J$ generated by $\ell$ elements. 
Since $I$ is of linear type on the punctured spectrum, it follows that $\HT(J:I) \geq d.$ 
Thus $ \mu(J) \geq d$ by \cite{HSCM}*{Theorem 3.1(i)} because $I$ is strongly Cohen-Macaulay and satisfies $G_d$. 
Therefore $\ell \ge d.$ On the other hand, one always has $\ell \leq d,$ hence $\ell =d$. This shows that $\mathcal{F}$ is a domain of embedding codimension 1, and so 
$\mathcal {F} \cong T/fT$, where $f$ is homogeneous with $\deg(f)-1=r,$ the reduction number of $I.$

By part (iii), $\delta_i y_j \in \pp$ for all $i,j$ and so 
$\delta_i \in \pp$ for all $i.$ Since at least one $\delta_i \neq 0$ by part (i), it follows that
$\pp$ contains a non-zero homogeneous polynomial in $T$ of degree $d-1$. If $\pp \subset \JJ,$ then
$\JJ \cap T=fT$ contains a non-zero homogeneous element of degree $d-1,$ which shows that $r=\deg(f) -1 \leq d-2.$ By \cite{PU99}*{Theorem 3.1} (see also \cite{UBal}*{Corollary 5.6}) the inequality $r \leq d-2=\ell-2$ implies that $\mu(I_1(\varphi)) \leq d.$  So $I_1(\varphi)$ is a complete intersection, which is ruled out by \autoref{setup_pfaffians}. This proves that $\pp \not\subset \JJ.$

As $\pp \subset T$ we trivially have that $\pp \not\subset \mm B,$ hence 
$\pp$ is not contained in any minimal prime of $\LL.$ Thus
$\HT(\LL + \pp B) \geq \HT(\LL) +1 =d+1,$ and so 
$\HT\left((\LL + \pp B)/\pp B \right) \geq d.$ The rest of the argument is similar to the proof
of part (i), but now we take $Q$ to be the quotient field of $T/\pp$ and
$\LL':=\LL(R \otimes_{\kk}Q).$ As the ideal $\LL'$ is proper and has height 
at least $d,$ it follows that $\LL'$ requires at least $d$ generators. Therefore 
the image of $A$ in $Q$ has rank at least $d,$ which shows that
$I_{d}(A) \not\subset \pp$, contrary to our assumption. This proves
that indeed $\HT(I_{d}(A)) \geq 2.$

\begin{comment}

	We first show that $\rank(A) \geq d.$ Let $Q$ be the quotient field of $T.$ Notice that $\LL':=\LL(R \otimes_{\kk}Q)$ is a
	proper ideal of the polynomial ring $R \otimes_{\kk}Q$ and that $\HT(\LL') \geq \HT(\LL) =d.$ Thus $\LL'$ requires at least $d$ generators, showing that $\rank(A) \geq d.$

	the reduction number is $d-2=\ell(I)-2$ which is the expected one. Hence, according to \cite{UBal}*{}, $I_1(\varphi)$ satisfies the row condition. This implies that $I_1(\varphi)$ is a complete intersection, a contradiction.

	As $\pp B$ is not in the minimal primes of $\Sym(I)$, it follows that 
	$$
	\HT(\LL, \pp) \,\ge\, d+1.
	$$
	Hence $\HT\left((\LL, \pp)/\pp B \right)\ge d$, and so we have that $\HT\left(\LL (R \otimes_\kk Q)\right) \ge d$ where $Q = \Quot(T/\pp)$.
	But the image of $A$ in $Q$ has rank $\le d-1$. 
	Hence the proper ideal $\LL (R \otimes_\kk Q)$ can be generated by $d-1$ elements, a contradiction.	
\end{comment}

(v) Part (iii) 
gives the complex	
$$0 \rightarrow T \xrightarrow{\left[\delta_1,\ldots,\delta_{d+1}\right]^t} T^{d+1} \xrightarrow{\; A^t \; } T^{d+1}$$ and the inclusion $(\delta_1,\ldots,\delta_{d+1}) \supset I_d(A).$
Since $\HT(I_d(A)) \ge 2$ by (iv), the Buchsbaum-Eisenbud acyclicity criterion shows that this complex is exact.
\end{proof}

\begin{remark}
\label{rem_general_eqs_Sym_Gor_3}
Assume that $\kk$ is an infinite field. 
Then there exists a bihomogeneous regular sequence $\ell_1,\ldots,\ell_d$ belonging to $\LL$ that generates $\LL$ at $\mm B \in \text{Min}(\LL)$,  and satisfies $\bideg(\ell_i) = (h, 1)$ (as required in \autoref{setup_Morley_forms}).
\end{remark}
\begin{proof}
Since $\rank(A) = d$ (see \autoref{lem_Gor_3_prop_A}(i)), $\mu(\LL \otimes_B B_{\mm B}) \le d$.
Now the claim follows because $\LL$ is generated in bidegree $(h,1)$.
\end{proof}

%For brevity of exposition, we specialize to the case of $h = 2$, and we utilize the following setup for the remaining of this subsection.
In the next theorem, we determine explicitly the defining equations of the Rees algebra of a Gorenstein ideal as in \autoref{setup_pfaffians} with $h = 2$ and $I_1(\varphi)$ a monomial ideal.

\begin{theorem}
\label{thm_sublime_h_2}
In addition to  \autoref{setup_pfaffians} assume that $h = 2$ and $I_1(\varphi)$ is a monomial ideal.
Without loss of generality we may assume that $I_1(\varphi)=(x_1x_2, x_1^2, \ldots, x_d^2)\, .$
Then the following statements hold$\, : $
\begin{enumerate}[\rm (i)]
	\item The natural isomorphism of bigraded $B$-modules ${}^*\Hom_T(B, T) \cong T[x_1^{-1}, \ldots, x_d^{-1}]$ restricts to an isomorphism 
	\begin{align*}
		{}^*\Hom_T(\Sym(I), T) &\;\cong\; B \cdot \left(\delta_1 x_1^{-1}x_2^{-1} + \delta_2x_1^{-2} + \cdots + \delta_{d+1} x_d^{-2}\right) x_3^{-1}\cdots x_d^{-1}\\
		&\quad + B \cdot x_1^{-1}x_3^{-1}\cdots x_d^{-1}  \\
		&\quad + B \cdot x_2^{-1}x_3^{-1}\cdots x_d^{-1}  \;\subset\; T[x_1^{-1},\ldots,x_d^{-1}].
	\end{align*}	
	In addition, $\AAA = \HL^0(\Sym(I)) \cong {}^*\Hom_T\left(\Sym(I), T\right)(-d,-d+1)$ is a free $T$-module that is minimally generated as a $B$-module by three elements of bidegrees $(0, 2d-2),$ $(1, d-1),$ and $(1,d-1)$.

	\item We use the notation of $\, \autoref{subsect_graded_T}$ and in particular \autoref{cor_Morley_forms}. 
	Define elements $U_i \in \Sym(I)$ by the equations
	\begin{align*}
		&1\otimes_T U_0 \;=\; \frac{1}{\alpha} \cdot \morl_{(d,0)} \cdot \left( \left(\delta_1 x_1^{-1}x_2^{-1} + \delta_2x_1^{-2} + \cdots + \delta_{d+1} x_d^{-2}\right) x_3^{-1}\cdots x_d^{-1} \otimes_T 1\right)\\ 
		&1\otimes_T U_1 \;=\; \frac{1}{\alpha} \cdot \morl_{(d-1,1)} \cdot \left( x_1^{-1} x_3^{-1}\cdots x_d^{-1}\otimes_T 1\right) \\
		%\text{ and } \\ 
		&1\otimes_T U_2 \;=\; \frac{1}{\alpha} \cdot \morl_{(d-1,1)} \cdot \left( x_2^{-1} x_3^{-1}\cdots x_d^{-1}\otimes_T 1\right) .
	\end{align*}
	These elements have bidegees $(0, 2d-2),$ $(1, d-1),$  $(1,d-1),$ respectively, and they generate the ideal $\AAA \subset \Sym(I)$ minimally.
	In particular, the defining ideal $\JJ \subset B$ of $\, \Rees(I)$ is minimally generated by $\LL$ and lifts to $B$ of the elements $U_0,U_1,U_2$.\\
	Moreover by \autoref{rem_general_eqs_Sym_Gor_3}, if $\, \kk$ is an infinite field, then 
	there exists a $B$-regular sequence $\ell_1,\ldots,\ell_d$ in $\LL$ that satisfies the conditions of \autoref{setup_Morley_forms}; therefore the discussion following \autoref{setup_Morley_forms} and in particular \autoref{rem_classical_Morley} provide explicit Morley forms and the linear polynomial $\alpha\in T$.
	
	\item Let $\, \GG : \PP_\kk^{d-1} \dashrightarrow \PP_\kk^d$ be the rational map determined by the forms $f_1,\ldots,f_n$ generating $I.$
	Let $Y \subset \PP_\kk^{d}$ be the closure of the image of $\, \GG.$
	The degree of $\, \GG$ is equal to $\deg(\GG) = 2^{d-2}$ and $\deg(Y) = 2d-2.$
	
	\item $\depth\left(\Rees(I)\right) = d$ and $\depth\left(\gr_I(R)\right) = d-1.$
\end{enumerate}
\end{theorem}
\begin{proof}		
We write  $S := \Sym(I)$.
From \autoref{thm_Gor_3}, we have an isomorphism of bigraded $S$-modules $\AAA \cong {}^*\Hom_T(S, T)(-d,-d+1)$. 
In order to determine each of the $T$-module $\Hom_T(S_{i}, T)$ we are going to construct a (not necessarily minimal) homogeneous free presentation $$P_i
\xrightarrow{\;\psi_i\;}
Q_i
\rightarrow
S_i
\rightarrow
0$$
of $S_i$.

%Let $\aaa \subset R$ be the ideal generated by the monomial support of $I_1(\varphi)$.
Denote by $M_i$ the set of monomials in $R_i$ that are not in 
the ideal $I_1(\varphi)$.  Let $C = \{x_1x_2, x_1^2,\ldots,x_d^2\}$ be the minimal monomial generating set of $I_1(\varphi)$ and let $C^j$ be the minimal monomial generating set of $(I_1(\varphi))^j$.
For $i \ge 2$ and $\alpha =(\alpha_3,\ldots,\alpha_{d}) \in \{0,1\}^{d-2}$, we define the set 
%$\Dij$  consisting of monomials of degree $i-2,$
$$
\Dij \,:= \, \big\{ x_1^{b_1}x_2^{b_2}x_3^{\alpha_3} \cdots  x_d^{\alpha_{d}} c  \in R_{i-2} \mid c \in C^j \text{ and }  b_1+b_2 \le 1\big\}. 
$$ 
Write $\Dij \cdot C$ for the set of products of elements in $\Dij$ and in $C.$ The particular shape of $C$ easily shows that 
the sets $\Dij$ give a partition of the minimal monomial generating set of $\mm^{i-2} \subset R$, and the sets $\Dij \cdot C$ give a partition 
of the minimal monomial generating set of $\mm^{i-2}I_1(\varphi) \subset R$.

We consider the (not necessarily minimal) free presentation of the graded $T$-module $S_i$  
$$
P_i
\xrightarrow{\;\psi_i\;}
Q_i
\rightarrow
S_i
\rightarrow
0 \, ,
$$
where 
$$
Q_i \;:=\; \bigoplus_{\alpha} \overbrace{\bigoplus_{u \in \Dij } \bigoplus_{c \in C } T \cdot q_{u,c}}^{Q_{i,\alpha}} \;\;\; \oplus \;\;  \overbrace{\bigoplus_{v \in M_i} T \cdot q_v}^{N_i}
$$
and 
$$
P_i \; := \; 
\bigoplus_{\alpha}\underbrace{\left(\bigoplus_{u \in \Dij} \bigoplus_{l=1}^{d+1} T \cdot p_{u,l}  \;\;\; \oplus \bigoplus_{r \in F_{i,\alpha}} T
	\cdot p_r\right)}_{P_{i,\alpha}} ;
$$
here $q_{u,c}$, $q_v$, $p_{u,l}$  and $p_r$ are the chosen basis elements for the free $T$-modules $Q_i$ and $P_i$.
The free $T$-module $B_i$ has the natural description $B_i = \bigoplus_{|\gamma| = i} T \cdot \xx^\gamma \cong T^{\binom{i+d-1}{d-1}}$.
In the free $T$-module $Q_i$, $q_{u,c}$ corresponds to the pair $(u \in \Dij, c \in C)$ and $q_v$ corresponds to the monomial $v \in M_i$.
The map $Q_i \rightarrow S_i$ is defined by sending $q_{u,c} \mapsto \overline{uc} \in S_i$ and $q_v \mapsto \overline{v} \in S_i$.
The sets $\Dij \cdot C$ together with $M_i$ give a partition of the monomials of degree $i$ in $R$.
Hence we have a natural surjection  $Q_i \surjects S_i$.

Notice that there can exist $(u \in \Dij, c \in C)$ and $(u' \in \Dij, c' \in C)$ such that $uc = u'c'$, and as a consequence we need a relation to identify $q_{u,c}$ and $q_{u',c'}$.
The fact that the sets $\Dij \cdot C$ are disjoint implies that there are no further relations among the chosen basis elements of $Q_i$.
Denote by $\Fij$ a set indexing all the needed homogeneous relations (columns in $\psi_i$) among the $q_{u,c}$'s with $u \in \Dij$ and $c \in C$.
Let $p_r \in P_i$ be the basis element that gives the relation $r \in \Fij$.
For any $r \in \Fij$, we have that 
$$
\psi_i(p_r) \, \in \, \bigoplus_{u \in \Dij} \bigoplus_{c \in C} T \cdot q_{u,c} \;\subset \, Q_{i, \alpha}. 
$$
It now follows that $B_i \cong Q_i \big/ \big(\sum_{r \in \Fij} T \cdot \psi_i(p_r)\big)$.
The relations of $S_i$ come from a bihomogeneous generating set of the ideal $\mm^{i-2} \cdot \LL$, which in turn can be chosen to be the union of bihomogeneous generating sets of $\Dij \cdot \LL$.
In the free $T$-module $P_i$, $p_{u,l}$ corresponds to the pair $(u \in \Dij, g_l)$.
Since $g_l = [x_1x_2, x_1^2, \ldots, x_d^2] \cdot [a_{1,l},a_{2,l},\ldots,a_{d,l}]^t$, for any $u \in \Dij$, we have that
$$
\big[\psi_i(p_{u,1}), \psi_i(p_{u,2}),\ldots,\psi_i(p_{u,d+1})\big] \, = \, \big[q_{u,x_1x_2}, q_{u,x_1^2}, \ldots, q_{u,x_d^2} \big] \cdot A.
$$

By combining these facts, it follows that $\psi_i$ is a presentation matrix of $S_i$. 
See \autoref{examp_illus} for an illustration of this presentation.  
%	We chose to work with the non-minimal presentation $\psi_i$ because, as described above, it has a nice decomposition into blocks with the matrix $A$ diagonally plus the trivial relations indexed by the $\Fij$'s.

We have $\Hom_T(S_i, T) = \Ker(\psi_i^t) \subset \Hom_T(Q_i, T),$ and 
%by abuse of notation, 
we denote the dual basis elements of $ \Hom_T(Q_i, T)$ by $q_{u,c}^*$ and $q_v^*.$ The isomorphism $\AAA \cong {}^*\Hom_T(S, T)(-d,-d+1)$ shows that $\Ker(\psi_i^t)=\Hom_T(S_i, T) = 0$ whenever $i> d.$  

We notice that $\psi_i(P_{i,\alpha}) \subset Q_{i,\alpha},$ and we write $\psi_{i,\alpha}: P_{i,\alpha} \to Q_{i,\alpha}$ for the restriction map. Thus we obtain a direct sum decomposition of maps
$$
\psi_i \;=\; \bigoplus_{\alpha} \, \Pij .
$$
%The direct sum decomposition of $\psi_i$ gives
As a consequence,
$$\Hom_T(S_i, T) \,\cong\, \Ker(\psi_i^t) = \bigoplus_{ \alpha }  \Ker(\Pij^t) \; \oplus\;  \bigoplus_{v \in M_i} T \cdot q_v^*\, .
$$

%We divide the matrix $\psi_i$ into blocks of rows:  let $\Pij$ be the submatrix given by the rows corresponding to the basis vectors $\{q_{u,c}\}_{u \in \Dij, c \in C}$ and $\eta_v$ be the submatrix given by the row corresponding with the basis vector $q_v$.
%We have $\Ker(\eta_v^t) = T \cdot q_v$ because $\eta_v$ is the zero matrix.

If $|\alpha|:=\sum  \alpha_i = i-2$, then $j=0$ and $D_{i,\alpha} = \{z_\alpha\}$ where $z_\alpha := x_{3}^{\alpha_3}\cdots x_{d}^{\alpha_d}$, and so there are no relations among the $q_{u,c}$'s indexed by the pairs $(u \in D_{i,\alpha}, c \in C)$.
Hence, when $|\alpha| = i-2$,  \autoref{lem_Gor_3_prop_A}(v) shows that $\Ker(\psi_{i,\alpha}^t)$ is the free $T$-module given by 
$$
\Ker(\psi_{i,\alpha}^t) \;= \; T \cdot \underbrace{\left(\delta_1 q_{z_\alpha,x_1x_2}^* + \delta_2 q_{z_\alpha,x_1^2}^* + \cdots + \delta_{d+1} q_{z_\alpha,x_d^2}^*\right)}_{Z_{i,\alpha}}.
$$	
%Denote by $Z_{i,\alpha} := \delta_1 q_{z_\alpha,x_1x_2} + \delta_2 q_{z_\alpha,x_1^2} + \cdots + \delta_{d+1} q_{z_\alpha,x_d^2}$ the generator of $\Ker(\psi_{i,\alpha}^t)$.
%Since there is a direct sum decomposition of maps
%$$
%\psi_i \;=\; \bigoplus_{\alpha} \Pij \;\oplus\; \bigoplus_{v \in M_i} \eta_v,
%$$
We obtain that
\begin{align*}
	\Ker(\psi_i^t) &\,=\,  \bigoplus_{|\alpha| < i-2 }  \Ker(\Pij^t)  \;\;\; \oplus\; \bigoplus_{ |\alpha| = i-2}  \Ker(\psi_{i,\alpha}^t)  \;\;\; \oplus\; \bigoplus_{v \in M_i} T \cdot q_v^*\\
	&\,=\, \bigoplus_{|\alpha| < i-2 }  \Ker(\Pij^t)  \;\;\; \oplus\; \bigoplus_{ |\alpha| = i-2}  T \cdot Z_{i,\alpha} \;\;\; \oplus\; \bigoplus_{v \in M_i} T \cdot q_v^*.
\end{align*}

We now consider the case $i = d$.
Notice that $M_d = \emptyset$.
Let $\sigma := (1,\ldots,1) \in \NN^{d-2}$.
The $T$-module $\AAA_0$ is free of rank at most $1;$ indeed,
$\AAA_0 \subset T$ is the defining ideal of the special fiber ring of 
$I,$ which is a principal ideal (see the proof of \autoref{lem_Gor_3_prop_A}).
%and it is generated by the equation of the hypersurface given as the %image of the rational map $\GG : \PP_\kk^{d-1} \dashrightarrow \PP_\kk^d$ determined by the generators of $I$.
The isomorphism $\AAA_0 \cong \Hom_T(S_d, T)(-d+1)$ then shows that $\Hom_T(S_d, T) = \Ker(\psi_d^t)$ is also free of rank at most $1,$ hence indecomposable. As moreover $Z_{d,\sigma} \neq 0,$
we conclude that 
$$
\Ker(\psi_d^t) \,=\, \Ker(\psi_{d,\sigma}^t) \,=\, T \cdot Z_{d,\sigma}
$$ 
and  $\Ker(\psi_{d,\alpha}^t) = 0$ for all $\alpha \neq  \sigma$.

For a given $\alpha$ set $\eta:=x_{3}^{1-\alpha_3}\cdots x_{d}^{1-\alpha_{d}}.$ There is a commutative diagram of $T$-linear maps
$$
\begin{tikzpicture}
	\matrix (m) [matrix of math nodes,row sep=4em,column sep=6em,minimum width=2em]
	{
		P_{i,\alpha} & Q_{i, \alpha}\\
		P_{i+d-2-|\alpha|,  \sigma}& Q_{i+d-2-|\alpha|,  \sigma} \\};
	\path[-stealth]
	(m-1-1) edge node [left] {$\cdot \eta$} node [right]{$\nvisom $}  (m-2-1)
	edge  node [above] {$\psi_{i,\alpha}$} (m-1-2)
	(m-2-1.east|-m-2-2) edge 
	node [above] {$\psi_{i+d-2-|\alpha|,  \sigma}$} (m-2-2)
	(m-1-2) edge node [right] {$\cdot \eta$} node [left] {$\visom$}(m-2-2);
\end{tikzpicture}
$$
where the vertical maps are isomorphisms. Thus $\Ker(\Pij^t)\cong\Ker(\psi_{i+d-2-|\alpha|,  \sigma}^t)$.
If $|\alpha| < i-2$, then $i+d-2-|\alpha| > d$, and therefore $\Ker(\psi_{i+d-2-|\alpha|,  \sigma}^t) = 0.$ So it follows that $\Ker(\Pij^t)=0.$

In summary, we have now proved that for all $0 \le i \le d,$
%we conclude that $\Hom_T(S_i, T)$ is the free $T$-module given by 
$$
\Hom_T(S_i, T) = \Ker(\psi_i^t) \,=\,   \bigoplus_{ |\alpha| = i-2}  T \cdot Z_{i,\alpha} \;\;\; \oplus\; \bigoplus_{v \in M_i} T \cdot q_v^*
$$
and that this is a free $T$-module.
%for all $0 \le i \le d$.
Using the natural embedding $\Hom_T(S_i, T) \subset \Hom_T(B_i, T), $ we can rewrite $\Hom_T(S_i, T)$ as 
$$
\Hom_T(S_i, T) \; = \;  \bigoplus_{|\alpha| = i-2}  T \cdot \left(\delta_1 (x_1x_2z_\alpha)^*  + \cdots + \delta_{d+1} (x_d^2z_\alpha)^*\right) \;\;\; \oplus\; \bigoplus_{v \in M_i} T \cdot v^* \;\;\subset \;\; \Hom_T(B_i, T) \, ,
$$
%by considering the reduction $Q_i \surjects B_i \cong Q_i \big/ \big(\sum_{r \in \Fij} T \cdot \psi_i(p_r)\big)$.
where $(x_1x_2z_\alpha)^*, \ldots(x_dz_\alpha)^*, v^*$ denote dual basis elements of $\Hom_T(B_i, T)$. 

%We have the natural isomorphism  of bigraded $B$-modules.
Moreover, using the identification ${}^*\Hom_T(B, T) \cong T[x_1^{-1},\ldots,x_d^{-1}]$, we obtain an isomorphism of bigraded $B$-modules
$$
\Hom_T(S_i, T) \;\cong\;  \bigoplus_{ |\alpha| = i-2}  T \cdot \left(\delta_1 (x_1x_2z_\alpha)^{-1}  + \cdots + \delta_{d+1} (x_d^2z_\alpha)^{-1}\right) \;\;\; \oplus\; \bigoplus_{v \in M_i} T \cdot v^{-1} \;\;\subset \;\; T[x_1^{-1},\ldots,x_d^{-1}].
$$
Since $M_{d-1} = \{ x_1x_3\cdots x_d,\, x_2x_3\cdots x_d \}$, we conclude that 
%there is an isomorphism of bigraded $B$-modules
%${}^*\Hom_T(S, T)$ is the free $T$-module given by
\begin{align*}
	{}^*\Hom_T(S, T) &\;\cong\; B \cdot \left(\delta_1 x_1^{-1}x_2^{-1} + \delta_2x_1^{-2}+ \cdots + \delta_{d+1} x_d^{-2}\right) x_3^{-1}\cdots x_d^{-1}\\
	&\quad + B \cdot x_1^{-1}x_3^{-1}\cdots x_d^{-1}  \\
	&\quad + B \cdot x_2^{-1}x_3^{-1}\cdots x_d^{-1}  \;\subset\; T[x_1^{-1},\ldots,x_d^{-1}].
\end{align*}	
The three generators above are minimal for bidegree reasons.
The bihomogeneous isomorphism 
$$
\AAA = \HL^0(S) \cong {}^*\Hom_T(S, T)(-d,-d+1)
$$ 
shows that $\AAA$ is minimally generated as an $S$-module by three bihomogeneous elements of bidegrees $(0,2d-2),$ $(1,d-1),$ $(1,d-1).$
This completes the proof of part (i).

\vspace{.2cm}

Part (ii) is a direct consequence of part (i) and \autoref{cor_Morley_forms}.
\begin{comment}
	Indeed, under the natural isomorphism ${}^*\Hom_T(B, T) \cong T[x_1^{-1},\ldots,x_d^{-1}]$, any element $\chi \in \left[T[x_1^{-1},\ldots,x_d^{-1}]\right]_{d-i}$ such that $\LL \cdot \chi = 0$ gives a $T$-homomorphism $u_\chi \in \Hom_T(S_{d-i}, \AAA_d) \cong \Hom_T(S_{d-i}, T)(-d+1)$, and we have the equality 
	$$
	\frac{1}{\alpha}\cdot \big(\text{id}_S \otimes (\omicron \circ u_\chi)\big)(\morl_{(i,\delta-i)}) \; = \; \frac{1}{\alpha} \cdot \morl_{(i,\delta-i)} \cdot (1 \otimes_T \chi)
	$$
	in $\AAA_i \subset S_i$.
\end{comment}

\cite[Theorems 5.4 and 5.8(iii)]{CR} and \cite{BCD2020}*{Theorem 2.4(iii)} show that $\deg(\GG) \cdot \deg(Y) = (d-1)2^{d-1}$. 
Since $\deg(Y) = 2(d-1)$ by part (i), it follows that $\deg(\GG) = 2^{d-2}$. 
Thus part (iii) is proven.

Part (iv) is a consequence of part (i), \autoref{prop_depth_Rees}, \cite{H82}*{proof of Proposition 1.1}, and \cite{PU99}*{Theorem 3.1}.
\end{proof}

%	The parts in blue color are not part of the matrix $M_2$, and they are used display the ordering that we chose for the natural bases of the source and target of $M_2$.
%
%The relations of $S_3$ come from the generators of the ideal $\mm \cdot \LL$, and this gives a total of $20$ relations.
%The monomial ideal $\mm \cdot (x_1^2, x_2^2, x_3^2, x_4^2, x_1x_2)$ only covers $18$ of the monomials of degree $2$ in $R$, and it misses the two monomials $x_1x_3x_4$ and $x_2x_3x_4$.
%Since we are repeating twice the basis elements in $B_2$ that correspond with the monomials $x_1x_2^2$ and $x_1^2x_2$, we need to add the last two columns (relations) that identify the second row with the tenth and the fifth row with the sixth.
%By combining these basic facts, we obtain that $M_3$ is a presentation matrix of $S_3$.

\begin{example}
\label{examp_illus}
Let $d = 4$, $R = \kk[x_1,\ldots,x_4]$, $T = \kk[y_1,\ldots,y_5]$ and $h = 2$.
In this case, the $T$-module $S_3$ has (a non-minimal) presentation given by $T^{22} \xrightarrow{\;\;\psi_3\;\;} T^{22} \rightarrow S_3 \rightarrow 0$, where 

$$
\scriptsize
\psi_3 = 
\left[
\begin{smallmatrix}%{l|cccccccccccccccccccccc}
	& \color{blue}x_1g_1& \color{blue}x_1g_2 & \color{blue}x_1g_3 & \color{blue}x_1g_4 & \color{blue}x_1g_5 & \color{blue}x_2g_1 & \color{blue}x_2g_2 & \color{blue}x_2g_3 & \color{blue}x_2g_4 & \color{blue}x_2g_5 & & & \color{blue}x_3g_1 & \color{blue}x_3g_2 & \color{blue}x_3g_3 & \color{blue}x_3g_4 & \color{blue}x_3g_5 & \color{blue}x_4g_1 & \color{blue}x_4g_2 & \color{blue}x_4g_3 & \color{blue}x_4g_4 & \color{blue}x_4g_5 \smallskip\\\hline
	\color{blue}x_1\cdot x_1x_2 | & \color{red}a_1 &\color{red}b_1 &\color{red}c_1 &\color{red}d_1 & \color{red}e_1 &\color{red}0 &\color{red}0 & \color{red}0 &\color{red}0 &\color{red}0 &\color{red}0 & \color{red} 0 & 0 & 0 & 0 & 0 & 0 & 0 & 0 & 0 & 1 & 0 \\
	\color{blue}x_1\cdot x_1^2 | &\color{red} a_2 & \color{red}b_2 &\color{red}  c_2 &\color{red}  d_2 & \color{red} e_2 & \color{red}0 & \color{red}0 & \color{red}0 &\color{red} 0 &\color{red} 0 &\color{red} 0 & \color{red}0 & 0 & 0 & 0 & 0 & 0 & 0 & 0 & 0 & 0 & 0 \\
	\color{blue}x_1\cdot x_2^2 | & \color{red} a_3 &\color{red} b_3 & \color{red}c_3 & \color{red}d_3 &\color{red} e_3 &\color{red} 0 & \color{red}0 & \color{red}0 &\color{red} 0 &\color{red} 0 &\color{red} 0 & \color{red} 0 & 0 & 0 & 0 & 0 & 0 & 0 & 0 & 0 & 0 & 1 \\
	\color{blue}x_1\cdot x_3^2 | & \color{red} a_4 & \color{red}b_4 & \color{red}c_4 &\color{red} d_4 &\color{red} e_4 &\color{red} 0 &\color{red} 0 &\color{red} 0 &\color{red} 0 &\color{red} 0 &\color{red} 0 & \color{red}0 & 0 & 0 & 0 & 0 & 0 & 0 & 0 & 0 & 0 & 0 \\
	\color{blue}x_1\cdot x_4^2 | &\color{red} a_5 &\color{red} b_5 & \color{red}c_5 &\color{red} d_5 & \color{red}e_5 &\color{red} 0 & \color{red}0 & \color{red}0 &\color{red} 0 &\color{red} 0 &\color{red} 0 & \color{red} 0 & 0 & 0 & 0 & 0 & 0 & 0 & 0 & 0 & 0 & 0 \\
	\color{blue}x_2\cdot x_1x_2 | & \color{red} 0 & \color{red}0 &\color{red} 0 & \color{red}0 &\color{red} 0 & \color{red}a_1 & \color{red}b_1 &\color{red} c_1 &\color{red} d_1 &\color{red} e_1 &  \color{red}0 & \color{red}-1 & 0 & 0 & 0 & 0 & 0 & 0 & 0 & 0 & 0 & 0 \\
	\color{blue}x_2\cdot x_1^2 | &\color{red}  0 & \color{red}0 & \color{red}0 &\color{red} 0 &\color{red} 0 & \color{red}a_2 & \color{red}b_2 &\color{red} c_2 & \color{red}d_2 & \color{red}e_2 & \color{red} -1 &\color{red} 0 & 0 & 0 & 0 & 0 & 0 & 0 & 0 & 0 & 0 & 0 \\
	\color{blue}x_2\cdot x_2^2 | & \color{red} 0 &\color{red} 0 & \color{red}0 & \color{red}0 &\color{red} 0 & \color{red}a_3 &\color{red} b_3 & \color{red}c_3 & \color{red}d_3 &\color{red} e_3 &\color{red}  0 & \color{red} 0 & 0 & 0 & 0 & 0 & 0 & 0 & 0 & 0 & 0 & 0 \\
	\color{blue}x_2\cdot x_3^2 | &\color{red}  0 &\color{red} 0 & \color{red}0 & \color{red}0 & \color{red}0 & \color{red}a_4 &\color{red} b_4 & \color{red}c_4 &\color{red} d_4 &\color{red} e_4 &\color{red}  0 & \color{red}0 & 0 & 0 & 0 & 0 & 0 & 0 & 0 & 0 & 0 & 0 \\
	\color{blue}x_2\cdot x_4^2 | & \color{red} 0 & \color{red} 0 & \color{red}0 & \color{red} 0 & \color{red} 0 & \color{red}a_5 & \color{red}b_5 & \color{red} c_5 &\color{red} d_5 & \color{red} e_5 & \color{red} 0 & \color{red} 0 & 0 & 0 & 0 & 0 & 0 & 0 & 0 & 0 & 0 & 0 \\
	\color{blue}x_3\cdot x_1x_2 | & 0 & 0 & 0 & 0 & 0 & 0 & 0 & 0 & 0 & 0 & 0 & 0 & \color{green} a_1 & \color{green} b_1 &\color{green}  c_1 & \color{green} d_1 & \color{green} e_1 &   0 & 0 & 0 & 0 & 0  \\
	\color{blue}x_3\cdot x_1^2 | & 0 & 0 & 0 & 0 & 0 & 0 & 0 & 0 & 0 & 0 & 0 & 0 & \color{green} a_2 & \color{green} b_2 & \color{green} c_2 & \color{green} d_2 & \color{green} e_2 &   0 & 0 & 0 & 0 & 0 \\
	\color{blue}x_3\cdot x_2^2 | & 0 & 0 & 0 & 0 & 0 & 0 & 0 & 0 & 0 & 0 & 0 & 0 & \color{green} a_3 & \color{green} b_3 & \color{green} c_3 & \color{green} d_3 & \color{green} e_3 & 0 & 0 &0 & 0 & 0  \\
	\color{blue}x_3\cdot x_3^2 | & 0 & 0 & 0 & 0 & 0 & 0 & 0 & 0 & 0 & 0 & 0 & 0 & \color{green} a_4 & \color{green} b_4 & \color{green} c_4 & \color{green} d_4 & \color{green} e_4 &0 & 0 & 0 & 0 & 0\\
	\color{blue}x_3\cdot x_4^2 | & 0 & 0 & 0 & 0 & 0 & 0 & 0 & 0 & 0 & 0 & 0 & 0 & \color{green} a_5 & \color{green} b_5 & \color{green} c_5 & \color{green} d_5 & \color{green}  e_5 &   0 & 0 & 0 & 0 & 0\\
	\color{blue}x_4\cdot x_1x_2 | & 0 & 0 & 0 & 0 & 0 & 0 & 0 & 0 & 0 & 0 & 0 & 0 & 0 & 0 & 0 & 0 & 0 & \color{orange} a_1 &\color{orange} b_1 & \color{orange}c_1 &\color{orange}  d_1 & \color{orange} e_1\\
	\color{blue}x_4\cdot x_1^2 | & 0 & 0 & 0 & 0 & 0 & 0 & 0 & 0 & 0 & 0 & 0 & 0 & 0 & 0 & 0 & 0 & 0 & \color{orange} a_2 & \color{orange} b_2 & \color{orange} c_2 & \color{orange} d_2 & \color{orange} e_2 \\
	\color{blue}x_4\cdot x_2^2 | & 0 & 0 & 0 & 0 & 0 & 0 & 0 & 0 & 0 & 0 & 0 & 0 & 0 & 0 & 0 & 0 & 0 & \color{orange} a_3 & \color{orange} b_3 &\color{orange} c_3 & \color{orange} d_3 & \color{orange} e_3 \\
	\color{blue}x_4\cdot x_3^2 | & 0 & 0 & 0 & 0 & 0 & 0 & 0 & 0 & 0 & 0 & 0 & 0 & 0 & 0 & 0 & 0 & 0 & \color{orange} a_4 &\color{orange}  b_4 &\color{orange} c_4 &\color{orange} d_4 & \color{orange} e_4  \\
	\color{blue}x_4\cdot x_4^2 | & 0 & 0 & 0 & 0 & 0 & 0 & 0 & 0 & 0 & 0 & 0 & 0 & 0 & 0 & 0 & 0 & 0 & \color{orange} a_5 &\color{orange} b_5 & \color{orange}c_5 &\color{orange} d_5 & \color{orange} e_5 \\
	\color{blue} x_1x_3x_4 | & 0 & 0 & 0 & 0 & 0 & 0 & 0 & 0 & 0 & 0 & 0 & 0 & 0 & 0 & 0 & 0 & 0 & 0 & 0 & 0 &    0 & 0 \\
	\color{blue} x_2x_3x_4 | & 0 & 0 & 0 & 0 & 0 & 0 & 0 & 0 & 0 & 0 & 0 & 0 & 0 & 0 & 0 & 0 & 0 & 0 & 0 & 0 &    0 & 0 \\
\end{smallmatrix}	
\right].
$$
\smallskip

The first $10$ rows correspond to the basis elements $q_{u,c}$ where $u \in D_{3,(0,0)}$.
The next $5$ rows correspond to the basis elements $q_{u,c}$ where $u \in D_{3, (1, 0)}$.
The next $5$ rows correspond to the basis elements $q_{u,c}$ where $u \in D_{3, (0, 1)}$.
The last two rows correspond to the two basis elements $q_{x_1x_3x_4}$ and $q_{x_2x_3x_4}$.
The columns indexed by the $x_ig_j$'s correspond to the basis vectors $p_{u,l}$.
The matrix  also shows the direct sum decomposition of $\psi_3={\color{red}\psi_{3, \{0,0\}}} \oplus \color{green} \psi_{3, \{1, 0\}} \oplus \color{orange} \psi_{3, \{0,1\}}$. 
\end{example}

\begin{remark}
\label{rem_bound_gen_sharp}
The fact that $\AAA$ is generated in $\xx$-degrees at most one, proved in \autoref{thm_sublime_h_2}, was already predicted by \autoref{thm_Jou_dual_Sym}(iv).
This also shows that the bound in \autoref{thm_Jou_dual_Sym}(iv) is sharp. The fact that the symmetric algebra and the Rees algebra of $I$ first differ in $\yy$-degree $d-1$ was already proved in \cite{UV93}*{Theorem 2.5}. There it is also shown
that $\AAA_{(\star, d-1)}$ is isomorphic to a shift of $\Ext^d_R(R/I_1(\varphi), R).$ In the setting of \autoref{thm_sublime_h_2}, 
this already implies that $\AAA$ has exactly two minimal generators in $\yy$-degree $d-1.$ 

%In \cite{UV93}*{Theorem 2.5} is where the authors also proved that  $\AAA_{\star, d-1}\cong \Ext^d_R(R/I_1(\varphi), R),$ in particular \cite{UV93}*{Theorem 2.5} shows already that $\AAA$ has exactly two generators in $\yy$-degree  $d-1.$
\end{remark}

\section*{Acknowledgments}
We thank the reviewer for carefully reading our paper and for several comments and corrections.
C.P.~was partially supported by NSF grant DMS-2201110.
B.U.~was partially supported by NSF grant DMS-2201149.
Y.C.R.~is grateful to the mathematics departments of Purdue University and the University of Notre Dame, where the majority of the work was done, for their hospitality and for excellent working conditions.

%
%\begin{theorem}
%	(general case)
%\end{theorem}

\vspace{0.3cm}

\bibliography{references}
\end{document}